\newtheorem{thm}{Theorem}[section]
\newtheorem{lem}[thm]{Lemma}
\newtheorem{prop}[thm]{Proposition}
\newtheorem{hyp}[thm]{Hypotheses}
\newtheorem{ppt}[thm]{Properties}
\newtheorem{cor}[thm]{Corollary}
\newtheorem{defn}[thm]{Definition}
\newtheorem{rem}[thm]{Remark}
\newcommand{\U}{\mathcal U}
\newcommand{\V}{\mathcal V}
\newcommand{\Pa}{\mathcal P}
\newcommand{\diam}{\mathsf{diam}}
\newcommand{\D}{\mathcal D}
\newcommand{\Z}{\mathbb{Z}}
\newcommand{\N}{\mathbb{N}}
\newcommand{\M}{\mathcal{M}}
\newcommand{\A}{\mathcal{A}}
\newcommand{\f}{\varphi}
\newcommand{\mup}{\overline{\mu}}
\newcommand{\tp}{\overline{T}}
\newcommand{\Mp}{\overline{M}}
\newcommand{\R}{\mathcal{R}}
\newcommand{\LG}{\mathcal{L}}
\newcommand{\Nt}{\mathcal N}
\title{Limit theorems for self-intersecting trajectories in $\Z$-extensions}
\address{Univ Brest,  CNRS  UMR 6205, Laboratoire de Mathématiques 
de Bretagne Atlantique.}
\begin{document}
\author{ Phalempin Maxence }

\maketitle

\begin{abstract}
We investigate the asymptotic properties of the self-intersection numbers for $\Z$-extensions of chaotic dynamical systems, including the $\Z$-periodic Lorentz gas and the geodesic flow
on a $\mathbb Z$-cover of a negatively
curved compact surface.
We establish a functional limit theorem.  
\end{abstract}

\section{Introduction}
Mechanical dynamical systems involves the description of the state of particles on a phase space by its position and velocity. 
On such a space, a natural notion of self-intersection number of a flow $(\varphi^0_s)_{s\ge 0}$ up to time $t$ is the number $\mathcal N_t$ of  pair of time instances $(s,u)\in[0,t]$ with $s\ne u$ such that $\varphi^0_s$ and
$\varphi^0_u$ have the same position (but maybe not the same velocity).\\ 
Asymptotic properties of the self-intersection number of trajectories of the unit geodesic flow on a (not necessarily constant) negatively curved compact surface have been studied by Lalley in \cite{lalley} (see also the appendix \ref{secfinimes} for a new approach of this result). In this finite measure case, the self-intersection number, normalised by $t^2$ converges almost surely to
a constant $e'_I$, corresponding to the expectation of the intersections number of two independent trajectories of unit length. In this article, we investigate this question in infinite measure, and more specifically the case of $\mathbb Z^d$-extension of chaotic probability preserving dynamical system. 
In \cite{peneautointer}, P\`ene studied the case of the $\mathbb Z^2$-periodic Lorentz gas (which is a $\mathbb Z^2$-extension of the Sinai billiard). For this model, the self-intersection
number (normalized by $t\log t$) converges almost everywhere to a constant.
For completeness, let us indicate that in the easy case of $\mathbb Z^3$-extensions of chaotic systems, again an almost everywhere convergence holds (with normalization in $t$, see appendix \ref{seczd}).
\\
The present paper is mainly devoted to the case of systems modeled by $\mathbb Z$-extension of %
a chaotic dynamical system, which will exhibit a very different behavior from the two previous studied cases (finite  measure and $\mathbb Z^2$-extension). Instead of an almost everywhere convergence
to a constant, we establish a result of convergence in distribution (in the strong sense) to a random variable. Motivated by the study of models enjoying the same properties as the $\Z$-periodic Lorentz gas with finite horizon (with domain $\mathcal R_0$ contained in a cylinder $\mathbb T\times\mathbb R$) and as the unit geodesic flow on a $\Z$-cover of an hyperbolic surface, we establish a result in a natural general context including these two models. As a consequence, we establish the following result.
Let us recall that these two models describe the displacement of point particles moving at unit speed in some domain $\mathcal R_0$, and thus are given by flows 
defined on  the unit tangent bundle $\mathcal T^1\mathcal R_0$ of the domain $\mathcal R_0$ (up to some identification in the case of the Lorentz gas). Furthermore, the Lorentz gas flow preserves the Lebesgue measure on $\mathcal T^1\mathcal R_0$ and the geodesic flow preserves the Liouville measure on $\mathcal T^1\mathcal R_0$. 
Up to normalization, the self intersection number satisfies some limit theorem involving the two intrinsic quantities :
the Lalley constant $e'_I$  (expectation of the intersections number modulo $\Z$ between two independent trajectories of unit length) and the law of the one dimensional Brownian motion $(\tilde{B}_t)_t$ limit, as $m\rightarrow +\infty$ of the "discretized" (in $\mathbb Z$) position of $(\f_{mt}^0)_t$ normalized by $\sqrt{m}$
(see Proposition~\ref{defi_e'I} for rigorous definitions of the quantity $e'_I$ as well as the Brownian motion $B'$).

\begin{thm}\label{thmtempscontinu}
For the $\mathbb Z$-periodic Lorentz gas with finite horizon (resp. for the unit geodesic flow on a $\mathbb Z$-cover $\mathcal R_0$ of a compact negatively curved surface),
the family of normalized self-intersection number $(\frac 1 {t^{3/2}} \Nt_t)_{t\geq 0}$ converges strongly in distribution\footnote{The strong convergence in distribution with respect to some (finite or $\sigma$-finite) measure $\mathfrak m$ means the convergence in distribution to the same random variable (with same distribution) with respect to any probability measure $\mathbf P$ absolutely continuous with respect to $\mathfrak m$.} with respect to the Lebesgue measure (resp. with respect to the Liouville measure)  to $e'_I\int_{\mathbb{R}} \tilde L_1^2(x)dx$,  where 
$(\tilde L_t)_t$ is the continuous version of the local time of the one dimensional Brownian motion $(\tilde{B}_t)_t$.
%
\end{thm}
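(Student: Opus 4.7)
The plan is to derive Theorem~\ref{thmtempscontinu} from the general discrete-time limit theorem developed in the body of the paper by realising both examples as special cases of a hyperbolic $\Z$-extension. For the $\Z$-periodic Lorentz gas I would take the billiard collision map, and for the geodesic flow a Young/Markov section; in both cases, quotienting by the $\Z$-action produces a probability preserving base transformation equipped with a bounded, centred, aperiodic $\Z$-valued cocycle $\Phi$ that records the $\Z$-displacement between consecutive returns. A standard Kac suspension argument, using integrability of the return time and the Birkhoff ergodic theorem, absorbs the difference between discrete and continuous time into the deterministic constants appearing in the statement.

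Inside this framework, the heuristic driving the proof is the fibrewise decomposition
$$\Nt_t \;=\; \sum_{n\in\Z} \Nt_t^{(n)},$$
where $\Nt_t^{(n)}$ counts pairs of times at which the trajectory sits in the translate $\mathcal R_0+n$ of the fundamental domain, together with Lalley's finite-measure asymptotic (recovered in Appendix~\ref{secfinimes}) applied fibre by fibre, yielding $\Nt_t^{(n)}\approx e'_I\,\ell_t(n)^2$, where $\ell_t(n)$ is the occupation time of level $n$. Summing these approximations and applying the functional invariance principle for the cocycle $\Phi$, which guarantees that the rescaled occupation profile $t^{-1/2}\ell_t(\lfloor x\sqrt t\rfloor)$ converges jointly to the Brownian local time $\tilde L_1(x)$, a continuous-mapping argument produces
$$t^{-3/2}\sum_{n\in\Z}\ell_t(n)^2\;\Longrightarrow\;\int_{\mathbb R}\tilde L_1(x)^2\,dx,$$
which combined with the fibrewise approximation is precisely the announced limit.

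The real technical work lies in making the fibrewise approximation rigorous. Three points are delicate. First, the Lalley-type estimate $\Nt_t^{(n)}\approx e'_I\,\ell_t(n)^2$ must hold uniformly in $n$; this requires quantitative decay of correlations on the base (of Gibbs--Markov or Young-tower type) so that intersection pairs within the same fibre behave as if produced by two independent trajectories of length $\ell_t(n)$. Second, one must control the ``boundary'' pairs where $\varphi^0_s$ and $\varphi^0_u$ have the same position in $\mathcal R_0$ modulo $\Z$ but sit in distinct fibres of the cover; this contribution is localised near the discontinuities of the $\Z$-coordinate and should be absorbed into an error of order $o(t^{3/2})$. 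Third, the joint distributional convergence of the profile $(\ell_t(n)/\sqrt t)_{n\in\Z}$ toward the Brownian occupation density must be transferred through the nonlinear functional $\sum_n(\cdot)^2$, which in turn demands a tightness argument in a suitable weighted $\ell^2$-type topology.

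Finally, the passage from distributional convergence under the natural reference probability to the strong convergence in distribution defined in the footnote is handled via Eagleson's theorem (and its extensions \`a la Zweim\"uller for infinite measure preserving systems): since the randomness of the limit is carried by asymptotic quantities, the distributional limit is insensitive to the initial absolutely continuous distribution, so this upgrade is essentially automatic once convergence under one reference measure is established. I expect the main obstacle to be the first delicate point above, namely the uniform-in-$n$ Lalley-type bound, as this is precisely what forces the paper to work in a general abstract framework rather than directly on either concrete model.
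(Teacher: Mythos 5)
Your outer scaffolding is the same as the paper's: quotient the $\Z$-cover to a probability-preserving base with a bounded centred $\Z$-cocycle, pass between collision time and continuous time via Kac and Birkhoff, and upgrade convergence in distribution under the natural measure to \emph{strong} convergence in distribution via Zweim\"uller's theorem. But the engine driving your argument is genuinely different from the paper's, and it has a gap.

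You propose a fibrewise decomposition $\Nt_t=\sum_n\Nt_t^{(n)}$ together with a per-cell Lalley estimate $\Nt_t^{(n)}\approx e_I'\,\ell_t(n)^2$. The paper never isolates a single cell. It expresses the discrete self-intersection count as a global double sum $\sum_{0\le i<j\le n-1}\sum_k k\,1_{V_k^{(T^jx)}}\circ T^i(x)$, rewrites this using the cocycle as a sum over $1_{\{S_i=S_j+N\}}$, and only after summing over \emph{all} cells does the square-local-time object $\sum_l N_n(l)^2$ appear; its proof is an $L^2$-variance estimate on this global sum (the decorrelation Lemma~\ref{decorgr}). The reason the paper does this globally is precisely the difficulty you name but underestimate. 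The Lalley-type theorem (Appendix~\ref{secfinimes}) is a Birkhoff statement about a single \emph{contiguous} orbit of the finite-measure quotient. The trace of the $\Z$-cover orbit in a fixed cell $n$ is a random union of excursions whose total duration $\ell_t(n)\sim\sqrt t$ fluctuates at the same order as its mean, and the concatenated excursions are not an orbit of the quotient system. So ``$\Nt_t^{(n)}\approx e_I'\ell_t(n)^2$ uniformly in $n$ with summable error'' is not a consequence of exponential decay of correlations plus the finite-measure result; it would require a quantitative joint local limit theorem for returns to that cell together with a variance control \emph{conditioned on the random occupation time}. This is strictly harder than what the paper proves, and attempting it cell by cell throws away the cross-cell cancellations that make the paper's global $L^2$ estimate feasible.

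There is a second gap in the passage ``applying the functional invariance principle for the cocycle $\Phi$ guarantees that the rescaled occupation profile $t^{-1/2}\ell_t(\lfloor x\sqrt t\rfloor)$ converges jointly to the Brownian local time.'' Donsker's invariance principle for $(S_n)$ does not by itself imply convergence of the local-time profile; one needs an additional second-moment increment estimate on the discrete local time (the paper's hypothesis (f), equation~\eqref{tps_local}) and then the theorem of Dombry--Plantard (Proposition~\ref{propplantar}), which converts the FCLT into f.d.d.\ convergence of $n^{-1/2}N_{\lfloor nt\rfloor}(\lfloor\cdot\,n^{1/2}\rfloor)$ in $L^p(\mathbb R)$. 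Your ``continuous mapping'' step presupposes exactly this $L^p$-convergence; it is not a free consequence of the FCLT. With these two ingredients spelled out you would essentially be reassembling the paper's proof of the discrete-time Theorem~\ref{grosthmdiscret}, so the fibrewise route does not, in the end, shorten the argument.
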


For the Lorentz gas, it is also natural to investigate the
question of the self-intersection number $\nu_n$ until the $n$-th collision with an obstacle. Such quantity can be expressed through natural quantities such as the mean number $e_I$ of intersections modulo $\Z$ of two randomly chosen independent trajectories generated before hitting the Poincaré section and the Brownian motion $(B_t)_t$ limit, as $m\rightarrow +\infty$, of the vertical position
(in $\mathbb R$) of $(T^{\lfloor mt\rfloor}(\cdot))_t$
normalized by $\sqrt m$, where $T^{\lfloor mt\rfloor}$ is the
configuration at the $\lfloor mt\rfloor$-th collision time (see Theorem~\ref{grosthmdiscret} for a precised definition of $e_I$ and $B$).
The following result will appear as an intermediate result in our proof.\footnote{We also prove an analogous result for the self-intersection number of the geodesic flow until the $n$-th passage to the Poincar\'e section.}
\begin{thm}\label{thmtempsdiscret}
For the $\mathbb Z$-periodic Lorentz gas with finite horizon (recall $\mathcal R_0\subset \mathbb T\times\mathbb R$), $(\frac 1 {n^{3/2}} \nu_n)_{n \geq 0}$ converges strongly in distribution to $e_I\int_{\mathbb{R}} L_1^2(x)dx$, where 
$(L_t)_t$ is a continuous version of the local time of the Brownian motion $(B_t)_t$ 
.
\end{thm}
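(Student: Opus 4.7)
The plan is to decompose $\nu_n$ according to the $\mathbb{Z}$-fibers visited by the trajectory and then identify the scaling limit via an occupation-time computation. Writing $\pi : \mathbb{T}\times\mathbb{R}\to\mathbb{R}$ for the $\mathbb{Z}$-projection and denoting by $\xi_i$ the cell visited between the $i$-th and $(i+1)$-th collisions, I would first express
\begin{equation*}
\nu_n \;=\; \sum_{0\le i<j\le n-1} I_{i,j},
\end{equation*}
where $I_{i,j}$ counts the intersections, read modulo $\Z$, of the trajectory segments labelled $i$ and $j$. Such an intersection forces $\xi_i=\xi_j$, so the sum is in fact carried by pairs visiting a common cell, and can be organized as
\begin{equation*}
\nu_n \;=\; \sum_{k\in\Z}\sum_{\substack{0\le i<j\le n-1\\ \xi_i=\xi_j=k}} I_{i,j}.
\end{equation*}

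Next I would replace the local intersection counts $I_{i,j}$ by their "quenched" expectation. Using exponential decay of correlations for the underlying Sinai billiard base map, together with the definition of $e_I$ as the mean number of mod-$\Z$ intersections of two independent trajectory arcs (Theorem~\ref{grosthmdiscret}), I would show that for $j-i$ large the conditional expectation of $I_{i,j}$ given $\{\xi_i=\xi_j=k\}$ converges to $e_I$ uniformly in $k$, with an error controlled by a summable correlation bound. Pairs with $j-i$ small contribute $O(n)$, which is negligible after normalization by $n^{3/2}$. This yields
\begin{equation*}
\nu_n \;=\; \frac{e_I}{2}\sum_{k\in\Z} N_n(k)^2 \;+\; o\bigl(n^{3/2}\bigr),
\end{equation*}
where $N_n(k)=\#\{0\le i\le n-1:\xi_i=k\}$ is the occupation time of cell $k$.

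The last step is a standard occupation-time-to-local-time identification: using the quenched local limit theorem for the $\Z$-extension (which also underlies the Brownian limit of the vertical position) and the approximation $N_n(k)\approx\sqrt{n}\,L_1(k/\sqrt{n})$ in the appropriate $\ell^2$ sense, I would deduce
\begin{equation*}
\frac{1}{n^{3/2}}\sum_{k\in\Z} N_n(k)^2 \;\xrightarrow[n\to\infty]{d}\; 2\int_{\mathbb{R}} L_1^2(x)\,dx,
\end{equation*}
with the convergence holding \emph{strongly} in distribution because one can transfer the convergence from one reference probability measure to every absolutely continuous one via a density-point argument (in the spirit of Aaronson--Eagleson, as already used by P\`ene for $\Z$-extensions). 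Combining with the previous step gives Theorem~\ref{thmtempsdiscret}.

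The main obstacle is the second step: justifying the replacement $I_{i,j}\rightsquigarrow e_I$ inside a double sum whose size is of the critical order $n^{3/2}$. One needs uniformity in the cell $k$ (since cells far from the origin are visited rarely) and a quantitative $L^2$-type control of the fluctuations $I_{i,j}-e_I$ that is compatible with the local limit theorem used to unfold the occupation times. I expect this to require combining the anisotropic Banach space techniques for the billiard transfer operator with a careful splitting of the double sum into nearly-independent blocks, rather than a direct moment computation.
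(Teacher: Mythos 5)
Your overall architecture matches the paper's: express $\nu_n$ through the $\Z$-fiber occupation counts $N_n(k)$, replace the local intersection counts by their expectation $e_I$ via a decorrelation argument, identify $\frac{1}{n^{3/2}}\sum_k N_n(k)^2$ with $\int L_1^2$ using a Kesten/Dombry--Plantard type result, and upgrade to strong distributional convergence \`a la Zweim\"uller/Aaronson--Eagleson. However, there are two concrete problems with the middle step that the paper handles by very specific devices you do not identify.

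First, the claim ``such an intersection forces $\xi_i=\xi_j$'' is false. Because of the finite horizon, the arc from $T^i x$ to $T^{i+1}x$ can intersect the arc from $T^j x$ to $T^{j+1}x$ whenever $|\xi_i-\xi_j|\le D$ (hypothesis (b): $V_k^{(x)}\subset\bigcup_{|N|\le D}\D^N\Mp$). The resulting approximant is $\sum_{|N|\le D}\sum_k N_n(k)N_n(k+N)$, not $\sum_k N_n(k)^2$, and one must separately show that $E_{\mup}|\sum_k N_n(k)^2 - N_n(k)N_n(k+l)| = o(n^{3/2})$ for each fixed $l$; this is exactly the content of Lemma~\ref{tploc} and requires the $L^2$ modulus of continuity of $N_n$ from hypothesis (f), not just the central limit behaviour. (Your two spurious factors of $2$ -- $\frac{e_I}{2}$ in the approximation of $\nu_n$ and ``$\to 2\int L_1^2$'' in the occupation-time limit -- happen to cancel, but neither is correct individually, and the required bookkeeping is obscured by restricting to $\xi_i=\xi_j$.)

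Second, and more seriously, ``exponential decay of correlations'' with ``a summable correlation bound'' is not the right tool, and the guess about anisotropic Banach spaces is not what the paper does. What is actually needed is a \emph{mixing local limit theorem} (hypothesis (e)): an estimate for $\mup(A\cap\{S_n=N\}\cap\tp^{-n}B)$ with a Gaussian main term and an error of order $k/(n-2k)$, valid when $A,B$ are unions of elements of $\xi_{-k}^k$. The paper explicitly points out that in the $\Z$-extension setting (unlike $\Z^2$) this error is \emph{not} summable in $n$, so it cannot be used as a blunt decorrelation bound; instead Lemma~\ref{decorgr} applies the LLT three times in a nested way to peel off the indices $i<k<j<l$, using Lemma~\ref{somr} to absorb the sum over the intermediate displacement $r$, and the net gain is exactly enough to get $o(n^3)$ for the $L^2$ norm of the fluctuations. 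A further prerequisite you omit is that the events $V_k^{(T^jx)}$ are not measurable with respect to any fixed $\xi_{-m}^m$; the paper first replaces $\nu_n$ by a partition-approximation $\mathcal V_n$ built from the $\mathcal{A}_n=\Pa_{\lfloor n^{1/(20\alpha)}\rfloor}$ and shows $\|\nu_n-\mathcal V_n\|_{L^1}=o(n^{3/2})$ (Proposition~\ref{esp}, relying on hypothesis (g)). Without this reduction the LLT in the form (e) simply cannot be invoked. So the ``main obstacle'' you flag is indeed the crux, but the route you sketch for overcoming it -- moment/block decompositions with summable correlation bounds -- would fail precisely because of the non-summability; the paper's route through an iterated mixing LLT on a partition approximant is essential.
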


The behavior of $\nu_n$ is itself related through the $\Z$-extension structure to the behavior of a dynamical random walk on $\Z$ (Birkhoff sum) satisfying some kind of "enhanced invariance principle" as stated in some works from Tim Austin \cite{TA14},
for which the asymptotic properties of self intersection and local times appear as a consequence of the combination of a general result by \cite{plantard-dombry} (generalizing \cite{kesten}) and probabilistic theorems resulting from the Nagaev-Guivarc'h perturbation method. This combined with fine decorrelation properties (i.e. mixing local limit theorem with nice error term) on such systems provide the product in the limit given in the theorems.\\

The crucial point in our study is indeed that the systems we are interested in can be modeled by $\mathbb Z$-extension
of chaotic probability preserving dynamical systems enabling the establishment of mixing local limit theorem.
Indeed ergodic properties of $\mathbb Z$-extension systems (see \cite{ergodiconze},\cite{schmidt-z-rec}, \cite{penergodic} and \cite{penetcl}) are closely related to those of their finite measure fundamental domain, such as the Sinai billiard (for the Lorentz gas) or the unit geodesic flow on a compact negatively curved surface. Ergodicity of the Sinai billiard has been proved by Sinai in \cite{sinai} whereas Bunimovich, Sinai and Chernov studied the central limit theorem (see \cite{sinaiclt}, \cite{buni_sinai_chernov}) and Young stated exponential mixing properties through Young towers in \cite{young}. Similar properties adapted to continuous flows have been proven for geodesic flows on hyperbolic compact surfaces with Liouville measure by Hopf, Bowen and Ratner (\cite{bowen_geodesic},\cite{ratner}).

Let us indicate that, due to the fact that we study $\mathbb Z$-extension instead of $\mathbb Z^2$-extensions, though some scaling limit of the visit in $0$ have been set up for both systems in \cite{DSV2008} and self intersection for $\mathbb Z^2$-extensions in \cite{peneautointer},
specific difficulties appear compared to the previous work. First, for $\mathbb Z$-extensions, the error term in the local limit theorem is more delicate to deal with since it is not summable. This results in different estimates and a very different kind of result (convergence to a random variable and not almost sure convergence to a constant). For the considered $\mathbb Z$-extensions (unlike the $\mathbb Z^2$-periodic Lorentz gas), the number of intersections of two geodesic segments is not limited to 0 or 1, we have to take in account the possibility of multiple intersections.
The fact that the convergence holds in distribution and not almost everywhere complicates the passage from discrete time (Theorem \ref{thmtempsdiscret}) to continuous time (Theorem \ref{thmtempscontinu}). Indeed, it is not enough to have a convergence result for $\nu_n$, we need a functional version of this result. The fact that we have to control the gap with a random variable leads to more delicate estimates.
Luckily, at some points, we were able to shorten some arguments since a control in $L^1$ is enough, but for most terms the control in $L^2$ is the most reasonable way to get the control in $L^1$. The assumptions highlighted in the present article are simpler than the properties of billiards used in \cite{peneautointer} (e.g. our proof does not require a quantitative control of fast returns close to the initial position). An additional effort is made here to express the
quantities appearing in Theorem \ref{thmtempsdiscret}
in terms of the flow, with e.g. the appearance in the limit of the Lalley constant. Furthermore, we establish a result under general and natural assumptions verified both in the case of the $\mathbb Z$-periodic Lorentz gas and in the case of the geodesic flow over a $\mathbb Z$-cover. This result, because of its generality, can be applied to other flows represented by $\mathbb Z$-extensions over a hyperbolic flow.


%
%


The article is organized as follows. Section \ref{example} presents and describes the aforementioned dynamical systems and some key decorrelation result they satisfy. The main results are stated under abstract conditions for a general class of dynamical systems in Section \ref{main}  and applied in section \ref{syst} to the $\Z$-periodic Lorentz gas and the $\Z$-periodic geodesic flow. Section \ref{preuve1} is dedicated to the proof of the main results.\\

%
%

\section{Examples}\label{example}
\subsection{The one-dimensional Lorentz gas with finite horizon}
The one dimensional Lorentz gas is a $\Z$-periodic billiard flow $(\M_0, \f_t^0, \LG_0)$ describing the behavior of a point particle moving at unit speed in a domain $\R_0$ corresponding to a cylindrical surface doted of open convex obstacles $(O_m+l)_{1 \leq m \leq I, l \in \mathbb{Z}}$ periodically placed according to $l \in \Z$  ($I$ being a finite set) with $C^3$ boundary, and non zero curvature. The point particle goes straight inside $\mathcal R_0$ and bounces against the obstacles according to the Snell-Descartes reflection law.\\
Formally, we define the set of allowed positions
$$
\R_0:=(\mathbb T\times \mathbb{R})\backslash \bigcup_{m,l}(O_m+l)
$$
where $\mathbb T:= \mathbb{R}/\Z$. 
$\M_0$ is then the phase space i.e the set of couple positions/unit-speed\\
 on $\R_0$:
$$
\M_0:=\R_0 \times \mathbb{S}^1/\sim
$$
where $\sim$ identifies incident and reflected vectors, i.e. it identifies elements $(q,v) \in \partial \R_0 \times \mathbb{S}^1$ satisfying $\langle v, n(q) \rangle \geq 0$ (outgoing vector) with $(q,v')$ where $v'=v-2\langle v, n(q) \rangle n(q)$ with $n(q)$ denoting the unit normal vector to $\partial \R_0$ in $q$ directing into $\R_0$. The Lorentz gas flow
$$
\begin{array}{r c l}
\f^0 : \mathbb{R} \times \M &\rightarrow& \M\\
(t,x) &\rightarrow& \f^0_t(x) 
\end{array}
$$
is defined as the flow associating the couple position/unit-speed $(q_0,v_0) \in \M_0$ the new couple $(q_t,v_t)$ after time $t$.
If no obstacle is met before time $t$ then
$$
\f^0_t(q_0,v_0)=(q_0+tv_0,v_0).
$$
And if $q_0+tv_0 \in \partial \R_0$ then
$$
q_t=q_0+tv_0
$$
and 
$$
v_t=v_0-2\langle v_0, n(q_t) \rangle n(q_t),
$$
where again $n(q_t)$ stands for the unit normal vector to $\partial \R_0$ in $q_t$ directed into $\R_0$. This flow preserves the Lebesgue measure $\LG_0$ on $\M_0$. The
$\Z$-periodic Lorentz gas $(\M_0, \f_t^0, \LG_0)$ is said to be in finite horizon if the following roof function
 $\tau^0 : \M_0  \rightarrow \mathbb{R}$ (free flight) is finite on any $x \in \M_0$ where $\tau^0$ is defined by 
$$
\tau^0(q,v)=\inf\{t>0, q+tv \in 
\partial \mathcal R_0
\} 
$$
with $\pi_\R$ the projection on position coordinates.
This system can easily be modeled as a suspension flow $(\M_0,\f^0_t,\LG_0)$ (formal definition is recalled in section \ref{main}) with roof function $\tau^0$ over the billiard map $(M,T_{|M}, \mu)$ defined as follow :\\
$$
M:=\bigcup_{i \leq I, l \in \mathbb{Z}}\{ (q,v), \; q \in \partial O_i+l, v \in \mathbb{S}, \, \langle v,n(q) \rangle \geq 0 \}
$$
the set of unitary vectors leaving the obstacles,
 $T:\M_0 \rightarrow M$ is defined for $x \in M$ by 
$$
Tx:=\f^0_{\tau^0(x)}(x),
$$
and the measure $\mu$ given by 
$$
\mu(dr,d\theta):= \cos (\theta)drd\theta.
$$
where $r$ holds for the curvilinear coordinates with direction cosine on the boundary of the obstacles $\partial O_i+l$ and $\theta$ the angle $\langle v,n(q) \rangle$.\\
Notice that $T$ is actually invertible, we denote by $\tilde T:=(T_{|M})^{-1}$ its inverse defined on $M$.

By $\mathbb Z$-periodicity of the configuration
of obstacles, $\mathbb{Z}$ acts on $M$ by the translation\\ $\D : M \rightarrow M$ of the position coordinates defined by $\D(q,\vec v)=(q+(0,1),v)$. Since $\mu$ and $T$ are themselves invariant by $\D$, 
 $(M,T,\mu)$ passes itself by quotient into a probability preserving dynamical system $(\overline{M},\overline{T},\overline{\mu})$ which is the discrete-time Sinai billiard and which may be described as~:

\begin{itemize}
\item$\overline{M}:=\bigcup_{i \leq I}\{ (q,v), \; q \in \partial O_i, v \in \mathbb{S}^1, \, \langle v,n(q)\rangle \geq 0 \}$,
\item $\overline{\mu}(dr,d\theta):=\frac 1 {2\sum_{i=1}^I|\partial O_i|} \cos(\theta)drd\theta$ is the probability measure on $\Mp$ obtained through rescaling.
\item For $x=(q,v) \in \overline{M}$, denoting $(q',v'):= T(x)$, $\overline{T}(x)= (q' \mod \mathbb{Z},v')$ (where $\mod \mathbb Z$ means $\mod\{0\}\times\mathbb Z$).\\
\end{itemize}

Define on $(\Mp,\tp,\mup)$ the following step function $\phi : \overline{M} \rightarrow \mathbb{Z}$ which for any $l \in \mathbb{Z}$ associates to any $x \in \overline{M}$ the element $\phi(x)$ such that 
$$
T\circ \D^l= \D^{l+\phi(x)}\circ \overline{T}.
$$

We remind here some useful facts on the Sinai Billiard $(\overline{M},\overline{T},\overline{\mu})$ and the step function $\phi$ defined above :
\begin{itemize}
\item Sinai showed in \cite{sinaiergo} the ergodicity of $(\overline{M},\overline{T},\overline{\mu})$. 
\item the set $R_0:=\{(q,v) \in \Mp, \langle v,n(q) \rangle =0\}$ of tangent vectors to an obstacle generates a congruent sequence $(\xi_i^k)_{i,k\in \Z}$ where $\xi_i^k$ is the partition of $\Mp \backslash R_{i,k}$ into connected components with $R_i:=T^i(R_0)$ for $i \in \mathbb{Z}$ and $R_{i,k}=\bigcup_{l=i}^k R_l$. The sets $R_{i,k}$ are finite union of $C^1$ curves and according to [\cite{pene-saussol}, Lemma A.1], there are constants $C>0$ and $0<a<1$ such that for any $k \in \N$ and any element $A \in \xi_{-k}^k$,
$$
\diam(A) \leq Ca^k.
$$   
\item $\phi$ is measurable and constant on elements of $\xi_{-1}^0$, and thanks to the finite horizon hypothesis, $\phi$ is bounded on $\Mp$. In addition, $\phi$ has zero mean on $\Mp$\footnote{To see that, notice that $\overline T^{-1}=\kappa\circ \overline T\circ\kappa$
and $\phi=-\phi\circ\kappa\circ \overline T$,
where $\kappa$ is the involution given by
$\kappa(q,v_{reflected})=(q,-v_{incident})$.}. 
 
\end{itemize}


\subsection{The Geodesic flow on a $\mathbb Z$-cover of a negatively curved  surface.}

%
%

The Geodesic flow on a surface describes the evolution of a point particle moving at unit speed on the surface the geodesic defined by its
initial position and speed.
The case where the surface is hyperbolic and compact is widely studied and provide typical example of Anosov flow. We remind here some definitions and notations kept through this section on the notion of geodesic flow and $\Z$-cover.

\begin{defn}[Geodesic flow on a compact negatively curved surface $\R$]\label{hypogeod1}
Let $\R$ be a $C^3$-negatively (not necessarily constant) curved oriented compact connected surface. 
It associated geodesic flow is the dynamical system $(\M,\f_t, L)$, where $\M:=T^1\R$ is the unit tangent bundle over $\mathcal R$, where  $(\f_t : T^1\R \rightarrow T^1\R)_t$ is the geodesic flow and $L$ the Liouville measure on $T^1\R$ (which is $\f_t$-invariant).
\end{defn}

\begin{defn}[$\mathbb Z$-cover $\R_0$ of $\R$]\label{defiZcover}
Let $\R$ be as in definition~\ref{hypogeod1}.
A $\Z$-cover $\mathcal{R}_0$ of $\R$ is a
connected surface 
such that there is some onto map $p:\R_0 \rightarrow \R$ such that
\begin{itemize}
\item[(1)]
 for any $x\in \R$, there is a neighborhood $V_x$ such that $p$ is isometric from any connected component $C\subset p^{-1}(V_x)$  onto its image.
\item[(2)]
The group 
$$
Cell(\R_0,p):=\{D': \R_0 \rightarrow \R_0 , D' \text{ is an isometry satisfying } p\circ D'=p\}
$$
is isomorphic to $\mathbb{Z}$
\item[(3)]
for any $x \in \R$, there is some $x_0 \in \R_0$ such that 
$$
p^{-1}(x)=\{D(x_0) : D' \in Cell(\R_0,p)\}.
$$
\end{itemize}
\end{defn}
In this section, we consider the geodesic flow system $(\M_0,\f_t^0, \mathcal L_0)$
on a $\Z$-cover $\mathcal R_0$ of $\mathcal R$. From definitions~\ref{hypogeod1} and~\ref{defiZcover}, $(\M_0,\f_t^0, \mathcal L_0)$ can be defined the following way : 
 $\M_0:=T^1\R_0$ is actually a $\Z$-cover of a negatively curved oriented compact connected $C^3$ surface $T^1\R$.
According to point (2) of definition~\ref{defiZcover}, let $\D \in Cell(\R_0,p)$ be an element satisfying $Cell(\R_0,p):=\{\mathcal{D}^n, n\in \mathbb{Z}\}$ which then passes onto an isometry on $T^1\R_0$. 

The geodesic flow $\f_t^0$ on $T^1\R_0$ is characterized by the one on $T^1\R$ via $p$ :
$$
dp\circ \f_t^0=\f_t\circ dp,
$$
and $\LG_0$ is still defined as the Liouville measure on the $\Z$-cover.

In what follow we recall how $(\M_0,\f_t^0, \mathcal L_0)$ can be seen 
as a $\mathbb Z$-extension of the $C^2$ geodesic flow $(\M,(\f_t)_t, L)$ over $\mathcal R$. Furthermore
the geodesic flow $(\M,(\f_t)_t, L)$ over $\mathcal R$ can be represented as a special flow
over a dynamical system isomorphic to a mixing subshift of finite type. Some geometric details in this construction will be useful to make sure that next section result applies.



Bowen and Ratner's work in \cite{bowen_geodesic} and \cite{ratner} led to the construction of arbitrarily small Markov partitions made from rectangles and Hasselblatt proved in \cite{hasselblatt} that their boundary are $C^1$ thus allowing the identification of $(T^1\R, \f, L)$ with a suspension flow over a probability preserving dynamical system $(\Mp, \tp, \mup)$ 
isomorphic to a mixing subshift of finite type.
The set $\overline M$ is a subset of $\mathcal T^1\mathcal R$ and the map $\tp$ 
corresponds to the first return map to $\overline M$ and
can be defined on the whole unit tangent bundle
$T^1\mathcal R$ by setting
$$
\tp x=\f_{\tau(x)}(x)\, ,
$$
where $\tau$ is the return time map defined by $\tau(x):= \inf \{ t>0, \f_t(x) \in \Mp\}$. 
Furthermore the Poincaré section $\Mp:=\bigcup_{i=1}^d \Pi_i \subset T^1\R$ can be chosen so that it satisfies the following properties (given a fixed $\delta$ smaller than the injectivity radius of $\mathcal R$). 
\begin{ppt}\label{part_mark}
\begin{itemize}
\item The sets $\Pi_i$ are pairwise disjoints and each $\Pi_i$ is a connected subset of a two-dimensional disk  contained in $\pi_{\mathcal R}^{-1}(D_i)$
whose diameter is less than 
$\delta/2$ and where $D_i$ is a geodesic segment,
and where $\pi_{\mathcal R}:\mathcal T^1\mathcal R\rightarrow\mathcal R$ is the canonical projection.
\item The sets $\Pi_i$ are transverse to the flow, and there is some $\eta>0$ such that for all $i$, in local coordinates, for every $q\in\gamma_i$, $\{\theta, \exp_{x_0}^{-1}(q,\theta)\in \Pi_i\}$ is contained within $(\pi/2-\eta,\pi/2+\eta)$.
\item For all $x \in \Mp$, $\f_{]0, \frac \delta 2[}(x)\cap \Mp \neq \emptyset$
\end{itemize}  
\end{ppt}
Note that these properties ensure in particular that two distinct geodesic trajectories  $\f_{[0,\tau(x)]}(x)$ and $\f_{[0,\tau(y)]}(y)$  (with $x,y\in\overline{M}$,  intersect each other at most once.
The importance of having such bound will be fully stated in Section \ref{main}.\\ 
Denoting $R_i:=\{x \in T^1\R, T^{i}x \in \partial \Mp\}$.
As shown in next proposition, $\tp$ is $C^1$ on $T^1\R\backslash R_{-1}$ 
 and bijective when restricted to the Poincaré section $\Mp$ over itself with reciprocal function given by
 $$
 \tilde Tx=\f_{-\tilde{\tau}(x)}(x)
 $$
  where $\tilde{\tau}(x):= \inf \{ t>0, \f_{-t}(x) \in \Mp\}$.
%

We recall the following result from the constructions of Ratner (see \cite{ratner}).

\begin{prop}
The application $\tp$ 
as defined above is $C^1$ on $T^1\R\backslash R_{-1}$ with $\Mp$ as above.

\end{prop}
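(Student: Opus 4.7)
The plan is to reduce the $C^1$ regularity of $\tp$ to that of the return time $\tau$, via the identity $\tp(x)=\f_{\tau(x)}(x)$. Since $\R$ is a $C^3$ surface, the geodesic flow $(t,x)\mapsto \f_t(x)$ is of class $C^2$ on $\mathbb R\times T^1\R$, so once $\tau$ is shown to be $C^1$ on $T^1\R\setminus R_{-1}$, the conclusion follows by composition with this flow.

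Fix $x_0\in T^1\R\setminus R_{-1}$. The condition $x_0\notin R_{-1}$ places $\tp(x_0)$ in the relative interior of some Poincar\'e section piece $\Pi_i$. By Hasselblatt's $C^1$ regularity result for the partition boundaries (already invoked in the excerpt), $\Pi_i$ is a $C^1$ codimension-one submanifold of $T^1\R$, and by Properties~\ref{part_mark}(2) the geodesic vector field is uniformly transverse to $\Pi_i$ at $\tp(x_0)$. Choose a local $C^1$ defining function $\rho$ for $\Pi_i$ near $\tp(x_0)$, and set $F(x,t):=\rho(\f_t(x))$. Transversality gives
\[
\partial_t F\bigl(x_0,\tau(x_0)\bigr) \;=\; d\rho_{\tp(x_0)}\!\left(\tfrac{d}{dt}\f_t(x_0)\Big|_{t=\tau(x_0)}\right) \;\neq\; 0,
\]
so the implicit function theorem produces a $C^1$ map $\sigma$ on a neighborhood $U$ of $x_0$ with $\sigma(x_0)=\tau(x_0)$ and $\f_{\sigma(x)}(x)\in\Pi_i$ for every $x\in U$.

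It remains to identify $\sigma$ with the first-return time $\tau$ on $U$, i.e.\ to rule out earlier hits of $\Mp=\bigcup_j \Pi_j$ under small perturbation of $x_0$. For any fixed $\varepsilon>0$, the arc $\{\f_t(x_0):\varepsilon\le t\le \tau(x_0)-\varepsilon\}$ is compact and, by minimality of $\tau(x_0)$, disjoint from the closed set $\Mp$, hence at positive distance from it; continuity of the flow keeps the corresponding perturbed arc at positive distance from $\Mp$ for $x$ close to $x_0$. The residual short initial and terminal segments are controlled by the uniform transversality of the sections given by Properties~\ref{part_mark}(2), which prevents the flow from immediately re-entering $\Mp$ near $x_0$ or from exiting and re-entering $\Pi_i$ near $\tp(x_0)$. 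Hence $\sigma\equiv\tau$ on a smaller neighborhood of $x_0$, so $\tau\in C^1$ there, and $\tp\in C^1$ on $T^1\R\setminus R_{-1}$ by composition.

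The main expected obstacle is precisely this last verification that the implicit solution $\sigma$ coincides locally with the genuine first-return time; the remaining ingredients ($C^2$ regularity of $\f$, $C^1$ regularity of $\partial\Pi_i$, and the transversality condition) plug directly into the implicit function theorem.
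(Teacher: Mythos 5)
The paper does not actually prove this proposition; it is stated as a recollection from Ratner's constructions with only the citation \cite{ratner}. So your proposal is a genuine attempt at supplying a direct argument, and the core mechanism (apply the implicit function theorem to $F(x,t)=\rho(\f_t(x))$, where $\rho$ is a $C^1$ defining function for $\Pi_i$ obtained from Hasselblatt's regularity, using transversality and the $C^2$ regularity of the flow) is the right and standard one.

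However, the last paragraph, where you rule out earlier hits to identify the implicit solution $\sigma$ with the first-hitting time $\tau$, has a real gap. Your compactness argument does handle the interior arc $\{\f_t(x_0):\varepsilon\le t\le\tau(x_0)-\varepsilon\}$, and when $x_0\notin\Mp$ the whole initial segment $\{\f_t(x_0):0\le t\le\tau(x_0)-\varepsilon\}$ is already a compact set disjoint from the closed set $\Mp$, so no separate transversality argument for the initial segment is needed. The trouble is the case $x_0\in\Mp$ (which is not excluded by $x_0\notin R_{-1}$ in the literal reading of the statement). There, the assertion that transversality ``prevents the flow from immediately re-entering $\Mp$ near $x_0$'' is false: for $x=\f_{-\varepsilon}(x_0)$ on the incoming side of the section containing $x_0$, one has $\tau(x)=\varepsilon\to 0$ and $\tp(x)=x_0$, so $\tau$ and $\tp$ as defined by $\tau(x)=\inf\{t>0:\f_t(x)\in\Mp\}$ jump discontinuously across $\Mp$ at every non-periodic point. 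This reflects an imprecision in the proposition's statement rather than an error unique to your proof, since the intended content is the $C^1$ regularity of $\tp$ restricted to $\Mp$ (the Poincar\'e return map, a self-map of a $2$-dimensional $C^1$ submanifold) away from $R_{-1}\cup R_0$, or equivalently of $\tp$ on a one-sided flowbox neighborhood of $\Mp$. You should either explicitly restrict to $x_0\notin\Mp$ for the full-space version, or run the same implicit function theorem argument intrinsically on the sections $\Pi_j\to\Pi_i$ to obtain the return-map statement that the rest of the paper actually uses; as written, the proposal silently inherits the flaw in the literal statement without flagging it.
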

The invariant measure $\mup$ is defined as the renormalized probability measure from the measure $\mu_0$ characterized through Borel sets $A \times [0,s)$ for $s>0$ and $A \subset \Mp$ by the following relation for any measurable $f$ on $\M$
$$
\int_A\int_0^sf\circ \f_t(q)d\lambda(s) d\mu_0(q)=\int_{\f_{[0,s)}(A)}f(x)dL(x).
$$ 
Here is a following well known property (see chapter 19  from book \cite{livre_saussol})
\begin{prop}
Let $\R$ be a compact Riemannian surface and $L$ the Liouville measure on the unitary bundle $T^1\R$.
Let $\Mp$ be as described above, 
then 
$T^1\R$ is locally diffeomorphic to an open set in $\Mp \times \mathbb{R}$ (by $(x,t)\mapsto \varphi_t(x)$ for $(x,t)\in\Mp\times\mathbb R$) and $L$ coincides in local coordinates with the measure $\mu_0 \otimes \lambda$ where $\mu_0$ is supported on $\Mp$ and given in its local curvilinear coordinates $(r, \phi)$ by
$$
d\mu_0(r, \phi)= \cos \theta dr d\theta.
$$
($r$ being the curvilinear position and $\theta$
the angle with the normal line to the disk $D_i$
at the point with curvilinear position $r$).

\end{prop}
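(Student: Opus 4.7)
The plan is in two steps: first, build the local chart by a standard flow-box argument; second, compute the pullback of $L$ in adapted coordinates and identify the factor $\cos\theta$.

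For the first step, Properties~\ref{part_mark} ensure that each $\Pi_i\subset \Mp$ is a smooth codimension-one disk transverse to the geodesic flow (the bound $\theta\in(\pi/2-\eta,\pi/2+\eta)$ gives a uniform lower bound on the angle between the geodesic spray and $\Pi_i$). The flow-box theorem then yields some $\varepsilon>0$ such that the map $\Phi_i:\Pi_i\times(-\varepsilon,\varepsilon)\to T^1\R$, $(x,t)\mapsto\f_t(x)$, is a diffeomorphism onto its image, which gives the announced local diffeomorphism.

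For the measure, I would work in curvilinear coordinates $(r,\theta)$ on $\Pi_i$, with $r$ the arc-length along the geodesic segment $D_i$ and $\theta$ the angle between $v$ and the unit normal $\vec n(r)$ to $D_i$, using a parallel-transported orthonormal frame $(\vec n(r),\vec\tau(r))$ along $D_i$. Fixing $r_0$, I would choose normal coordinates $(q^1,q^2)$ on $\R$ centered at $q(r_0)$ with $\partial_{q^1}=\vec n(r_0)$, $\partial_{q^2}=\vec\tau(r_0)$; in these coordinates $L=dq^1\wedge dq^2\wedge d\alpha$ at $q(r_0)$, where $\alpha$ is the fiber angle. Computing $d\Phi_i$ at $(r_0,\theta,0)$ one finds $\partial_r\mapsto(\vec\tau,0)$, $\partial_\theta\mapsto(0,\partial_\alpha)$, and $\partial_t\mapsto(\cos\theta\,\vec n+\sin\theta\,\vec\tau,0)$ --- the last identity uses that geodesics emanating from the center of normal coordinates are straight lines with constant velocity vector. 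The Jacobian determinant equals $\cos\theta$, so $\Phi_i^*L=\cos\theta\,dr\wedge d\theta\wedge dt$ at $t=0$.

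The extension to arbitrary $t$ follows from the invariance of $L$ under the geodesic flow combined with the intertwining $\Phi_i(\cdot,t+s)=\f_s\circ\Phi_i(\cdot,t)$: pulling back gives that $\Phi_i^*L$ is invariant under the $t$-translation, so its density is independent of $t$. One thus obtains $\Phi_i^*L=\cos\theta\,dr\,d\theta\,dt$ globally, which factors as $d\mu_0\otimes d\lambda$ as claimed. The main subtle point is the choice of a smoothly varying orthonormal frame along $D_i$ (for example, parallel-transported) so that the angular coordinate $\theta$ matches the fiber angle $\alpha$ up to a rotation that does not depend on $t$, ensuring $d\theta=d\alpha$ in the Jacobian computation; this is possible because $D_i$ is a short geodesic arc of diameter less than the injectivity radius.
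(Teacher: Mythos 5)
The paper states this proposition without proof, referring only to Chapter~19 of \cite{livre_saussol}, so there is no argument in the paper to compare against; your task was to supply a proof the authors treat as standard. Your proof is correct and self-contained. The flow-box step is justified by the explicit transversality of the $\Pi_i$ to the flow in Properties~\ref{part_mark}. The pointwise Jacobian computation is right: in normal coordinates centred at $q(r_0)$ with a parallel-transported orthonormal frame $(\vec n,\vec\tau)$ along the geodesic arc $D_i$, the Christoffel symbols vanish at the centre, so $\partial_r(q(r),v(r,\theta))\mapsto(\vec\tau,0)$, $\partial_\theta\mapsto(0,\partial_\alpha)$, and $\partial_t\mapsto(\cos\theta\,\vec n+\sin\theta\,\vec\tau,0)$, whose determinant against $dq^1\wedge dq^2\wedge d\alpha$ is indeed $\cos\theta$. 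The propagation to all $t$ via $\Phi_i(\cdot,t+s)=\f_s\circ\Phi_i(\cdot,t)$ and $\f_s^*L=L$ is the correct way to remove the restriction to $t=0$. One small point worth being aware of: the paper's Properties~\ref{part_mark} constrain an angle labelled $\theta$ to a window around $\pi/2$, whereas in the proposition (and in your Jacobian) $\theta$ denotes the angle with the \emph{normal} to $D_i$, for which transversality needs $\theta$ bounded away from $\pi/2$; these must be complementary angle conventions, and the paper's notation is internally ambiguous here. Your appeal to the explicitly stated transversality (rather than to the angle window itself) is the safe and correct reading.
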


According to the Bowen and Ratner constructions (see \cite{ratner},\cite{bowen_geodesic}), there is some $\Mp$ satisfying Properties \ref{part_mark} such that $(\Mp, \tp, \mup)$ is isomorphic to a mixing subshift of finite type $(\Sigma,\sigma,\nu)$ where $\nu$ is a Gibbs measure with potential $h \in H(\Sigma)$ ($H(\Sigma)$ being the set of Hölder functions).

For $i,j \in \N$, $i \leq j$, define $R_{i,j}:= \bigcup_{i \leq n \leq j}R_i$ and $\xi_i^j:=\Mp \backslash R_{i,j}$. Elements of $\xi_i^j$ make cylinders in the Bowen and Ratner constructions and thus satisfy
$$
\exists C>0, \; 0<a<1, \,s.t \; \forall k \in \N,\; \forall A \in \xi_{-k}^k, \;
\diam(A) \leq Ca^k.
$$


By definition of $\M_0$ as $\Z$-cover, $\Mp$ may be lifted in a set $M$ within $\M_0$.
Define the dynamical system $(M,T,\mu)$ with $T$ given by 
$$
Tx:= \f^0_{\tau^0(x)}(x)
$$
and $\tau^0$ standing for $\tau^0(x):= \inf \{ t>0, \f_t(x) \in \M \}$. 
The measure $\mu_0$ on $\Mp$ is lifted into a measure $\mu$ on $M$ by the following characterization on small enough open sets $A$ in $M$ by  
$\mu(dp(A))=\mu_0(A)$.
Fix some section $s : \R \rightarrow \R_0$ of the $\Z$-cover $p$ (i.e $p \circ s=id$) and define the step function $\phi : \Mp \rightarrow \Z$ through the following characterization for $x \in \Mp$,
$$
\D^{\phi(x)}\circ s(\tp x)=T\circ s(x).
$$ 
As defined, this function $\phi$ is constant on elements of $\xi_{-1}^0$.
$(\M_0, \f^0_t, \LG_0)$ is then a suspension flow with roof function $\tau^0$ over $(M,T,\mu)$ which is itself a $\Z$-periodic extension of $(\Mp, \tp, \Gamma\mup)$ through step function $\phi$ and rescaling $\Gamma:=\mu_0(\Mp)$ (see next section for formal definition). 
Such structure along with the invariance of the Liouville measure $\LG_0$ under the flow $\f^0_t$ ensures that $\phi : \Mp \rightarrow \Z$ has zero mean.

%
%

\section{General results.}\label{main}

Let $(\Mp,\tp, \mup)$ be a probability preserving dynamical system, 
$\phi : \Mp \rightarrow \Z$ a step function, $\tau : \Mp \rightarrow \mathbb{R}_+$ a roof function and $\Gamma >0$ a normalizing constant. 
Remind the following definitions.\\
 
\begin{defn}\label{defext}
The $\Z$-extension of a probability preserving dynamical system $(\Mp,\tp, \mup)$ by $\phi$ 
is the measure preserving dynamical system\\
 $(\Mp \times \Z,T', \sum_{n \in \Z} \mup \otimes \delta_n)$ whose transformation $T'$ is defined by
\begin{align}\label{ttprime}
T'(x,n):=(\tp x, n+\phi(x)).
\end{align}

\end{defn}
We will assume that $(M,T,\mu/\Gamma)$ is the $\Z$-extension of $(\Mp,\tp,\mup)$ by $\phi$.
\begin{rem}
Given an extension $(M,T,\mu)$, there exists a automorphism $\D$ on $M$ satisfying 
$$
T\circ \D =\D \circ T,
$$
where $\D$ corresponds to the application $(x,k) \mapsto (x,k+1)$ seen on $\Mp \times \Z$. Identifying $\Mp$ with $\Mp \times \{0\}$, it may be said that $\Mp \subset M$ and relation \eqref{ttprime} can be rewritten as
$$
T(x)=\D^{\phi(x)}(\tp(x)) \; \forall x \in \Mp.
$$
\end{rem}
 
\begin{defn}\label{defflow}
Given a dynamical system $(M,T, \mu)$ and a roof function $\tau : M \rightarrow \mathbb{R}_+$, 
a suspension flow over $(M,T, \mu)$ is 
a system $(M_\tau, \f_t, \lambda')$ where
\begin{itemize}
\item[-]$M_{\tau}:= \{(x,s), x \in M, 0 \leq s \leq \tau(x)\}$
\item[-]for all $(x,s) \in M_{\tau}$, $\f_t(x,s):=(T^{n_t(x)},t+s -\sum_{k=0}^{n_t(x)-1}\tau \circ T^k(x))$ where $n_t(x):=\sup\{n, \sum_{k=0}^{n-1}\tau\circ T^k(x)\leq t\}$
\item[-] $d\lambda'(x,s)=d\mu(x)ds$ for all $(x,s) \in M_{\tau}$ is an invariant measure.
\end{itemize}
\end{defn}

\begin{rem}
Here again, $M$ may be identified with $M \times \{0\} \subset \M_0$.
\end{rem}

Throughout this section, 
$(\M_0,\f^0,\LG_0)$ denotes a measure preserving dynamical system isomorphic to a suspension flow with roof function $\tau:M\rightarrow\mathbb R_+$
over a system $(M,T,\mu)$. We assume furthermore that $(M,T,\Gamma^{-1}\mu)$ is a $\Z$-extension of the probability preserving 
dynamical system $(\Mp,\tp, \mup)$ by $\phi:\Mp\rightarrow \mathbb Z$ and that $\tau \circ \D= \tau$, where $\D$ stands for an 
associated automorphism.

Let $\R$ be a set (corresponding to the set of positions in our examples), and fix a projection $\pi_\R : \M_0 \rightarrow \R$.
Denote by $\Nt_t$ the number of self intersections of the trace of the flow $\pi_\R \circ \f^0$:


$$
\Nt_t(x):=|\{ (s,u), 0\leq s<u\leq t : \pi_\R(\f_s^0(x))=\pi_\R(\f_u^0(x))\}|
$$

%
%
%
%
%
%

In the discrete case $(M,T,\mu)$, we also introduce and study the following natural quantity $\nu_n$ denoting the number of self-intersection up to the $n^{th}$ crossing of the Poincaré Section :

$$
\nu_n(x):= \sum_{k\geq 1} k \sum_{0 \leq i<j \leq n-1} 1_{V_k^{(T^j(x))}}\circ T^i(x)+\sum_{i=0}^{n-1}\nu_1(T^ix)
$$

where $\nu_1(x):=\left|\{(s,t)\in [0,\tau(x)] : s < t : \pi_\R(\f_s^0(x))=\pi_\R(\f_t^0(x))\}\right|$
and $V_k^{(x)}$ is the set 
$$
V_{k}^{(x)}:= \{ y \in M, |[y] \cap [x]|=k\}.
$$ 
with
$[x]:=\pi_\R(\f_{[0,\tau(x)]}(x))$ 
 standing for the set of points in $\R$ within the trajectory from $x \in M$ to $Tx$.\\ 

Let $D>0$,
for $i, j \in \Z$ such that $i <j$, we suppose we have a congruent family $(\xi_i^j)_{i,j}$
 of partitions of $\Mp$ and thus of $M$ such that $(\xi_i^j)_{i,j}$, $\phi$, $(\Mp,\tp, \mup)$ and $D$ satisfy the following assumptions :

\begin{itemize}
\item[(a)] $(\Mp,\tp, \mup)$ is an ergodic dynamical system and $\phi$ is constant along the elements of $\xi_{-1}^0$.

\item[(b)] Starting from the Poincaré section $M$, distinct trajectories cross just a bounded number of times before reaching the Poincaré section again:\\
 for all $x \in M$,
$$
\mu(\{ y \in M, |[x]\cap [y]|>D\})=0
$$
and for all $k \geq 1$,
$$
\mu(\{ y \in M, |[y]\cap [T^ky]|>D\})=0.
$$
In addition,
 $\nu_1$ satisfies $\nu_1(x)\leq D$ for all $x\in m$ and $\phi$ satisfies a "finite horizon" hypothesis :\\
  $ \forall x \in  \Mp, \forall k \in \Z$, $|\phi(x)| \leq D$ and $V_k^{(x)} \subset \bigcup_{N=-D}^D\D^N\Mp$.


\item [(c)]\label{a} $\D^{-1}V_k^{(\D x)}=V_k^{(x)}$




\item[(d)] $S_n:=\sum_{k=0}^{n-1}\phi\circ \tp^k$ satisfies
$$
\| S_n\|_{L^2(\mup)} =O(n^{1/2})
$$


\item[(e)]$\phi$ satisfies a local limit theorem with decorrelation :
For any $\sqrt {\frac 3 2} >p >1$, there is some $C>0$ such that for all $N\in \Z$,
and all sets $A,B \subset \Mp$ where $A$ is a union of elements from $\xi_{-k}^k$ and $B$ is an union of sets from $\xi_{-k}^\infty$ and for all $n \geq 2k+1$ :
$$
\left|\mup(A\cap (S_n=N) \cap \tp^{-n}(B))-\frac{e^{-\frac{N^2}{2\Sigma(n-2k)}}}{(2\pi \Sigma)^{1/2} (n-2k)^{1/2}}\mup(A)\mup(B) \right|\leq C\frac{k\mup(B)^{1/p}}{n-2k}
$$

\item[(f)] \label{e} $\left(\left(\frac {S_{\lfloor nt \rfloor}} {n^{1/2}} \right)_t \right)_n$ converges in distribution towards a Brownian motion $(B_t)_t$ with variance $\Sigma$ and the local time $N_n(l):=\sum_{k=0}^{n-1} 1_{S_k=l}$ (for $l \in \Z$) associated to $S_n$ satisfies
\begin{align}\label{tps_local}
E_{\mup} \left(|N_n(x)-N_n(y)|^2 \right)=O\left(n^{1/2}|x-y|\right).
\end{align}

Where the upper bound is uniform in $x,y \in \mathbb{Z}$ and $n \in \N$.

\item[(g)] Denote for $A \subset M$ 
$$
A^{[n]}:=\bigcup_{Z \in \xi_{-n}^n, Z\cap A \neq \emptyset}Z.
$$

There are some constants $1>a>0$, $C>0$ and $\alpha>0$ such that for $\epsilon >0$ and $k \in \N$, there is a $\mu$-essential partition $\Pa_{\epsilon}$ of $\Mp$ (i.e $\Pa_\epsilon$ is a collection of sets such that $\mu(\bigcup_{A\in \Pa_\epsilon})=\mu(\Mp)$ and for any $A,B \in \Pa_\epsilon$, $\mu(A\cap B)=0$) made of at most $C\lceil \frac{1}{\epsilon} \rceil^{2\alpha}$ sets $A \subset \Mp$ satisfying
$$
\mup(A)\leq C \epsilon^{2\alpha}
$$
and
\begin{align}\label{eqmajmesup}
\mu((\partial A)^{[n]}) \leq C \epsilon^{\alpha} a^n.
\end{align}

In addition, for $A \in \Pa_{\epsilon}$, there is some set $B_A^{[k]}$  described as union of elements of $\xi_{-k}^{k}$ such that for all $x,y \in A$  
$$
\bigcup_{m \geq 1}(V_m^{(x)})^{[k]}\triangle V_m^{(y)}\subset B_A^{[k]}
$$
and satisfying
\begin{align}\label{eqmajdif}
\mu(B_A^{[k]}) \leq C (\epsilon+a^k)^\alpha. 
\end{align}






\end{itemize}



\begin{rem}\label{ipef}
Hypothesis (e) is a local limit theorem for $S_n$ with decorrelation. \\
Hypothesis (g) is an abstract condition that states that two trajectories with close starting points remain close enough (in the sense they roughly cross the same trajectories). With this closeness is measured with the topology induced by the partition $\xi_{-k}^k$.  
\end{rem}

\begin{defn}[Strong convergence in distribution]
Given a measurable space $(M,\mathcal B)$ endowed with a finite or $\sigma$-finite measure $\mu$,
a sequence $( \V_n)_{n \in \N}$ of measurable functions on $M$ is said to converge strongly in distribution, with respect to $\mu$, to 
a random variable $R$ defined on a probability space $(\Omega,\mathcal F ,\mathbb P)$ if for all probability measure $P$ absolutely continuous with respect to $\mu$ ($P \ll \mu$),
$$
\V_n \overset{\mathcal{L}_P}{\rightarrow} R,
$$
where $\mathcal{L}_P$ stands for the convergence in distribution with respect to $P$.\\
We then write $\mathcal V_n\overset {\LG^s_\mu}{\underset {n \rightarrow \infty} \rightarrow}R$.
\end{defn}

\begin{thm}\label{grosthmdiscret}
Let $S >0$. Under hypotheses (a)-(g), 
$$
\left(\frac{1}{n^{3/2}}\nu_{\lfloor nt \rfloor} \right)_{t \in [0,S]} \overset {\LG^s_\mu}{\underset {n \rightarrow \infty} \rightarrow} \left( e_I \Gamma^{-2}\int_{\mathbb{R}}(L_t)^2dx \right)_{t \in [0,S]},
$$
where
$(L_s)_{s \geq 0}$ is a continuous version of the local time associated of the Brownian motion $B$ with the same variance $\Sigma$ as the one from hypothesis (f) and
where
$$
e_I:=\int_{\Mp\times M}|[y] \cap [x]|d\mu(y) d\mu(x). 
$$
\end{thm}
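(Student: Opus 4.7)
The plan is to rewrite $\nu_n$ as a double Birkhoff-type sum, decouple the intersection function from the $\Z$-valued walk via the mixing local limit theorem (e), and identify the limit as the self-intersection local time of the limit Brownian motion. First, split $\nu_n=\Sigma_n+R_n$ with
$$
\Sigma_n(x):=\sum_{0\le i<j\le n-1} g(T^ix,T^jx),\qquad g(y,z):=|[y]\cap[z]|,
$$
and $R_n(x):=\sum_{i=0}^{n-1}\nu_1(T^ix)$. By (b), $R_n\le Dn=o(n^{3/2})$ uniformly in $t\in[0,S]$. For $\overline x\in\Mp$ one has $T^i\overline x=\D^{S_i(\overline x)}\tp^i\overline x$, so (b) and (c) give $g(T^ix,T^jx)=F_{S_j-S_i}(\tp^i\overline x,\tp^j\overline x)$ with $F_N(u,v):=g(u,\D^N v)$, supported on $|N|\le 2D$. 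Hence
$$
\Sigma_n=\sum_{|N|\le 2D}\sum_{0\le i<j\le n-1} F_N(\tp^i\overline x,\tp^j\overline x)\,\mathbf 1_{S_j-S_i=N}.
$$

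\textbf{Decoupling.} For each bounded $N$ I replace $F_N(\tp^i\overline x,\tp^j\overline x)$ by the constant $c_N:=\int F_N\,d\mup^{\otimes 2}$. Hypothesis (g) provides a $\xi_{-k}^k\otimes\xi_{-k}^k$-measurable approximation of $F_N$ with $L^1(\mup^{\otimes 2})$-error of order $(\epsilon+a^k)^\alpha$ after a partition into at most $C\epsilon^{-2\alpha}$ cells, plus a boundary term from \eqref{eqmajmesup}. The mixing local limit theorem (e) applied to the pair $(\tp^i\overline x,\tp^j\overline x)$ conditioned on $\{S_j-S_i=N\}$ then gives asymptotic independence with error $O(k/(j-i))$ times $\mup(\cdot)^{1/p}$. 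Splitting the $O(n^2)$ pairs into a diagonal block $j-i\le n^\beta$ (treated by (d) and $\nu_1\le D$) and an off-diagonal block where $\sum k/(j-i)=O(kn\log n)$, and tuning $k=k(n)\sim\log n$ and $\epsilon=\epsilon(n)\to 0$ polynomially, I obtain
$$
\Sigma_n=\sum_{|N|\le 2D}c_N\sum_{0\le i<j\le n-1}\mathbf 1_{S_j-S_i=N}+o_{L^1(\mu)}(n^{3/2}).
$$
Unpacking $\mu=\Gamma\sum_{n\in\Z}\mup\otimes\delta_n$ and the $\D$-invariance gives $\sum_N c_N=\Gamma^{-2}e_I$.

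\textbf{Self-intersection local time and functional limit.} Rewriting with the local time $N_n(l)$, one has $\sum_{0\le i<j\le n-1}\mathbf 1_{S_j-S_i=N}=\tfrac12\sum_{l\in\Z} N_n(l)N_n(l+N)+O(n)$. For each fixed $N$, hypothesis (f) gives $\sum_l(N_n(l+N)-N_n(l))^2=O(n^{1/2}|N|)$, and Cauchy–Schwarz then yields $\sum_l N_n(l)N_n(l+N)=\sum_l N_n(l)^2+o_{L^1(\mu)}(n^{3/2})$. Summing over $|N|\le 2D$,
$$
\frac{\nu_n}{n^{3/2}}=\frac{e_I}{\Gamma^{2}}\cdot\frac{1}{n^{3/2}}\sum_{l\in\Z}N_n(l)^2+o_{L^1(\mu)}(1),
$$
and the same holds uniformly when $n$ is replaced by $\lfloor nt\rfloor$. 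The functional invariance principle in (f), together with the local-time convergence of Dombry–Plantard / Kesten type (consequence of (e)--(f) plus a tightness argument in $C([0,S])$), yields
$$
\Bigl(\tfrac{1}{n^{3/2}}\sum_{l\in\Z} N_{\lfloor nt\rfloor}(l)^2\Bigr)_{t\in[0,S]}\;\overset{\mathcal L}{\longrightarrow}\;\Bigl(\int_{\R}L_t^2(x)\,dx\Bigr)_{t\in[0,S]}.
$$
Strong convergence in distribution then follows because the limit depends on $x$ only through the walk $S$, and (e) forces its law to be independent of the initial density, after the fiberwise decomposition of any probability absolutely continuous with respect to $\mu=\Gamma\sum_n\mup\otimes\delta_n$ and the spatial translation invariance of $\int L_t^2$.

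\textbf{Main obstacle.} The heart of the argument is the decoupling step. The error $k/(j-i)$ in (e) is \emph{not} summable, in contrast to the $\Z^2$-extension case of \cite{peneautointer}, so all of the $n$ saved in the normalisation $n^{3/2}$ has to be extracted by optimising the diagonal cutoff, the scale $k$ of the partition $\xi_{-k}^k$, and the geometric scale $\epsilon$ in (g) simultaneously. Retaining $L^2$ rather than $L^1$ bounds on the centred residuals, as stressed in the introduction, is precisely what permits this balance; failing this, the usual $L^1$ bound would leak a logarithmic factor incompatible with the $n^{3/2}$ normalisation.
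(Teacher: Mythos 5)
Your overall architecture matches the paper's: discretise the intersection function through the partitions from (g), decouple via the mixing local limit theorem (e), convert to local times, compare $N_n(l)N_n(l+N)$ with $N_n(l)^2$ by Cauchy--Schwarz, and push through Dombry--Plantard, tightness, and strong distributional convergence. But there is a genuine gap at the heart of the argument, the decoupling step, and it is precisely at the place you flag as the ``main obstacle'' without actually resolving it.

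Your decoupling argument bounds the \emph{expectation} of the error: you sum the LLT error $O(k/(j-i))$ over pairs $(i,j)$, getting $O(kn\log n)=o(n^{3/2})$. This does control the difference of the means, and the paper uses exactly this kind of first-moment estimate to pass from $\nu_n$ to the $\xi_{-k_n}^{k_n}$-measurable surrogate $\mathcal V_n$ (Proposition~\ref{esp}(1)) --- that step works because the difference there is an indicator of a symmetric difference, hence nonnegative, so the $L^1$ norm equals the expectation. But your claim
$$
\Sigma_n=\sum_{|N|\le 2D}c_N\sum_{0\le i<j\le n-1}\mathbf 1_{S_j-S_i=N}+o_{L^1(\mu)}(n^{3/2})
$$
concerns a \emph{signed, centred} random quantity: each term of the sum contributes $O((j-i)^{-1/2})$ in absolute value in expectation (since $\mup(S_{j-i}=N)\sim (j-i)^{-1/2}$), so the sum of absolute values is itself of order $n^{3/2}$. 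The smallness of the $L^1$ norm must come from cancellations among terms, and those cancellations are captured only by a second-moment (variance) bound. This is what the paper's Lemma~\ref{decorgr} does: a fourth-order mixed-moment computation, iterating (e) three times over the ordered indices $(i,k,j,l)$ and tracking the $\mup(\cdot)^{1/p}$ decay of the error terms, to show each of the two centred sums $E_1,E_2$ is $o(n^3)$ in $L^2$. Your sketch does not contain this computation --- the ``diagonal/off-diagonal split with $\sum k/(j-i)=O(kn\log n)$'' is a first-moment, not a second-moment, estimate, and with the non-summable LLT error of (e) there is no shortcut. Thus as written your proof does not establish the claimed $o_{L^1}(n^{3/2})$.

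A smaller issue: your bound ``$\sum_l(N_n(l+N)-N_n(l))^2=O(n^{1/2}|N|)$'' cannot hold as stated, since (f) gives $E_{\mup}|N_n(x)-N_n(y)|^2=O(n^{1/2}|x-y|)$ for each fixed pair but the sum over $l\in\Z$ is over an unbounded range; the paper first truncates to $|l|\le n^{a+1/2}$ with $a\in[\tfrac14,\tfrac12)$ and controls the tail by a Billingsley maximal inequality for $\sup_k |S_k|$, which yields the needed $o(n^{3/2})$ in $L^1$. You should make this truncation explicit.
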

Recall that $L_t(a)$ is defined almost surely for $a \in \mathbb{R}$ and $t\ge 0$ by
$$
L_t(a) := \lim_{\epsilon \rightarrow 0} \frac{1}{2\epsilon} \int_0^t 1_{a- \epsilon \leq B_s \leq  a + \epsilon} ds,
$$

\begin{rem}\label{rmq}
Introduce an additional hypothesis,\\

(b') for all $i \in \Z$, for 
every $x$,  $\pi_\R([x])\cap \pi_\R([\D^ix])= \emptyset$,

The function $\D$ defined on $M$ may be extended into a function $\D_0$ on $\M_0$ satisfying

$$
\D_0(\f^0_t(x))=\f_t^0(\D_0(x)).
$$

Hypothesis (b') leads to the notion of "projection" modulo $\Z$ $\bar \pi_{\R}$ on $\M_0$ given by
$$
\bar \pi_{\R}(x)=\pi_{\R}(x)
$$
for $x \in \M$ and satisfying $\bar \pi_{\R}(\D_0x)=\pi_{\R}(x)$ for all $x \in \M_0$.
$\Gamma^{-2}e_I$ can then be rewritten (see lemma \ref{lemidei} page \pageref{lemidei})

\begin{align*}
\Gamma^{-2}e_I:&=\int_{\Mp \times \Mp}|\bar \pi_\R(\f^0_{[0,\tau(x))}(x))\cap \bar \pi_\R(\f^0_{[0,\tau(x))}(y))|d\mup(y) d\mup(x).
\end{align*}

\end{rem}

Denote by $\pi$ the projection of $\M$ onto $\Mp$.

\begin{thm}\label{grosthmcon}
Let $T>0$. On $(\M_0,\f^0_t,\LG_0)$, adding assertion (b') to the hypotheses of previous theorem then 
$$
\left(\frac 1 {t^{3/2}} \Nt_{ts}\right)_{s\in [0,S]}\overset {\mathcal L^*_{\mathcal L
_0}}{\underset {t \rightarrow \infty} \rightarrow}\left(e'_I\int_{\mathbb{R}} \tilde L_s^2(x)dx \right)_{s \in [0,S]}.
$$
 with
\begin{align*}
e'_I :=\int_{\M \times \M}|\bar \pi_\R(\f^0_{[0,1)}(x))\cap \bar \pi_\R(\f^0_{[0,1)}(y))|dL(y) dL(x)\\
\end{align*}
the constant appearing in \cite{lalley}
corresponding to the expectation of the number of intersections between $\pi_{\R}(\f_{[0,1]}(x))$ and $\pi_{\R}(\f_{[0,1]}(y))$ for $x$ and $y$  randomly chosen on $(\M,\f_t,L)$ 
and $(\tilde L_s)_{s \geq 0}$ is defined as the local time associated to a Brownian motion $\tilde B$ which can be seen as 
$$
\left( \frac{1}{t^{1/2}}S_{n_{tu}}\circ \pi(.)\right)_{u\in [0,S]} \overset {\mathcal L_{L}}{\underset {t \rightarrow \infty} \rightarrow} (\tilde B_u)_{u \in [0,S]}.
$$

\end{thm}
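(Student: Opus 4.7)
The plan is to reduce Theorem \ref{grosthmcon} to the discrete-time Theorem \ref{grosthmdiscret} via the natural time-change given by the Poincaré-crossing counter $n_t(x) := \sup\{n : T_n(x) \le t\}$, where $T_n(x) := \sum_{k=0}^{n-1}\tau(T^k x)$. The first step is to compare $\Nt_t$ with $\nu_{n_t}$: by definition $\nu_{n_t(x)}(x) = \Nt_{T_{n_t(x)}}(x)$, so the gap counts only intersections involving the incomplete final segment $\f^0_{[T_{n_t},\,t]}(x)$. Hypothesis (b) bounds both the self-intersections inside this segment (by $\nu_1 \le D$) and the number of intersections with each previously traversed segment (by $D$), yielding the pathwise estimate
$$
|\Nt_t - \nu_{n_t}| \le D(n_t + 1).
$$
Applying Birkhoff's theorem to $\tau$ descended to the ergodic base $(\Mp,\tp,\mup)$ (ergodic by (a), with $\tau\circ\D=\tau$ so that $\tau$ descends) gives $T_n/n \to \overline\tau := \int \tau\, d\mup$ almost surely, hence $n_t/t \to 1/\overline\tau$ uniformly on $s\in[0,S]$, making the remainder $(\Nt_{ts} - \nu_{n_{ts}})/t^{3/2}$ vanish uniformly $\LG_0$-almost surely.

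Next I would invoke Theorem \ref{grosthmdiscret} along the integer sequence $n = \lfloor t/\overline\tau \rfloor$, obtaining the strong convergence of $(\nu_{\lfloor n s\rfloor}/n^{3/2})_{s\in[0,S]}$ to $(e_I \Gamma^{-2}\int_{\mathbb R} L_s^2(x)\,dx)_{s\in[0,S]}$. The uniform Birkhoff control $n_{ts} = \lfloor (t/\overline\tau)s\rfloor + o(t)$, together with the almost-sure joint continuity of $(s,x)\mapsto L_s(x)$, lets one replace the deterministic argument $\lfloor ns\rfloor$ by the random $n_{ts}$ without altering the limit, by a Slutsky-type argument. The passage from strong convergence with respect to $\mu$ to strong convergence with respect to $\LG_0$ is built in: on the suspension, $d\LG_0 = d\mu\,ds$, any $\LG_0$-density factors fiberwise, and strong convergence in distribution is preserved under such integration (after absorbing the bounded shift $s$ in the initial point). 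Rescaling $n^{3/2} = \overline\tau^{-3/2}\,t^{3/2}$ then produces
$$
\frac{\Nt_{ts}}{t^{3/2}}\;\overset{\LG^s_{\LG_0}}{\underset{t\to\infty}{\rightarrow}}\;\overline\tau^{-3/2}\,e_I\Gamma^{-2}\int_{\mathbb R} L_s^2(x)\,dx.
$$

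The final step is to match this limit with $e'_I\int \tilde L_s^2(x)\,dx$. For the constant, unfolding the suspension via Fubini and using the modulo-$\mathbb Z$ rewriting of Remark \ref{rmq} (enabled by (b')), the Lalley constant $e'_I$ (average intersections per pair of unit-length flow segments) equals the average intersections per pair of Poincaré segments (of mean length $\overline\tau$) divided by $\overline\tau^{\,2}$, giving $e'_I = e_I\Gamma^{-2}/\overline\tau^{\,2}$. For the Brownian motions, the same estimate $n_{tu} \sim tu/\overline\tau$ transforms hypothesis (f) into the identity in law $\tilde B_u \overset{d}{=} B_{u/\overline\tau}$, so from the defining integral of the local time one gets $\tilde L_s(x) = \overline\tau\, L_{s/\overline\tau}(x)$; Brownian self-similarity $L_{at}(x)\overset{d}{=}\sqrt{a}\,L_t(x/\sqrt{a})$ then yields $\int \tilde L_s^2\,dx = \overline\tau^{1/2}\int L_s^2\,dx$. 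Combining,
$$
e'_I\int \tilde L_s^2\,dx \;=\; \frac{e_I\Gamma^{-2}}{\overline\tau^{\,2}}\cdot \overline\tau^{1/2}\int L_s^2\,dx \;=\; \overline\tau^{-3/2}\,e_I\Gamma^{-2}\int L_s^2\,dx,
$$
matching the limit above.

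The main obstacle I expect is precisely the functional substitution $\lfloor ns\rfloor\leadsto n_{ts}$: because the limit is here a genuine random variable (unlike in the finite-measure or $\mathbb Z^2$-extension settings where it is deterministic and Slutsky is trivial), this replacement requires uniform quantitative control on the increments $\nu_{n+m}-\nu_n$, which ultimately calls on an $L^2$ estimate built from the mixing local limit theorem (e) and the local-time regularity \eqref{tps_local}, and which must be established uniformly enough in the initial point to preserve strong convergence in distribution. The supplementary technicalities — uniformity in $s\in[0,S]$, the lift from $\mup$ to $\mu$ to $\LG_0$, and the identification of $e'_I$ as the Lalley constant — are comparatively routine once this core estimate is in hand.
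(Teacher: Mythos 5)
Your proposal follows essentially the same route as the paper: sandwich $\Nt_t$ between $\nu_{n_t}$ and $\nu_{n_t+1}$ using hypothesis (b), invoke Birkhoff for $n_t/t\to 1/E_{\mup}(\tau)$, apply the functional discrete-time Theorem~\ref{grosthmdiscret}, compose with the random time change, pass from $\mup$ to $\mu_\tau$ to $\LG_0$ via Fubini and Zweim\"uller's theorem, and finally identify $E_{\mup}(\tau)^{-3/2}\Gamma^{-2}e_I\int L_s^2\,dx$ with $e'_I\int\tilde L_s^2\,dx$ through Brownian self-similarity (your scaling computations for $\tilde L$ and for the constants are correct and match the paper's Proposition~\ref{defi_e'I} and Lemma~\ref{modz}).

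One point worth correcting: the ``main obstacle'' you flag --- namely that the substitution $\lfloor ns\rfloor\rightsquigarrow n_{ts}$ requires a separate $L^2$ estimate on the increments $\nu_{n+m}-\nu_n$ --- does not actually arise. The key observation, which is exactly what the paper exploits, is that Theorem~\ref{grosthmdiscret} already delivers a \emph{functional} convergence in the uniform topology on $C^0([0,S])$ (after the paper's Dini-type monotonicity lemma promotes the f.d.d. limit to a uniform one). Once one has that, and once one notes that the normalized time change $(n_{ts}/t)_{s\in[0,S]}$ converges almost surely to the \emph{deterministic} continuous increasing function $s\mapsto s/E_{\mup}(\tau)$, the composition is handled by the random time change theorem (the paper cites Billingsley, Theorem~3.9); joint convergence of the pair (process, time change) is automatic by Slutsky precisely because the limit time change is deterministic. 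No new quantitative control on increments of $\nu_n$ beyond what was already used inside the proof of Theorem~\ref{grosthmdiscret} is needed. The part you label ``comparatively routine'' --- the Birkhoff argument establishing $E_{\mup}(F)=E_{\mup_\tau}(f)$ that underlies Lemma~\ref{modz} --- is, if anything, the more delicate bookkeeping step; it is not a pure Fubini manipulation but relies on the flow-invariance of the Liouville measure and a pointwise Birkhoff comparison of discrete and continuous averages.
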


\section{Application : Lorentz gas and geodesic flows}\label{syst}

Both systems presented in introduction, the $\Z$-periodic Lorentz gas and the $Z$-periodic geodesic flows on a negatively curved surface,  satisfy Theorems \ref{grosthmcon} and \ref{grosthmdiscret} on the self intersections number of the trajectories on $\mathcal R_0$.
This section checks the different hypotheses required.\\
In both cases $\R$ denotes the set of position coordinates and $\pi_\R$ the projection along speed vectors $\pi_\R(q,v):=q$.
Only the constant $\alpha$ in the hypotheses differ from one system to another and corresponds to the Hölder regularity of $T$ on connected components of $M\backslash R_{-1,0}$ in each system (i.e $\alpha=1/2$ for the Lorentz gas and $1$ for the geodesic flow).\\

Hypothesis (a) : Both systems are suspension flows over an hyperbolic system $(\Mp,\tp, \mup)$ which is then ergodic (see \cite{bowen_geodesic} and \cite{ratner} for the geodesic flow or \cite{sinaiergo} for the Lorentz gas). $\phi$ is constant on the respective partition $\xi_{-1}^0$ as defined for each system in section \ref{example}.\\

Hypothesis (b): For both systems, 
the trajectories are geodesics on the space of positions with length uniformly bounded from above (the billiard model is a finite horizon one). Thus all the trajectories with starting point in $x\in \Mp$ and the trajectories $[y]$ with $y\in M$ that intersect them lay in a compact position set. Consequently, for two trajectories, the distance between two intersection from the same trajectories are uniformly bounded from below by the geodesic radius. Thus, the number of self intersections $|[x]\cap [y]|$ is uniformly bounded by some $D>0$.  The case of non finite intersections between $[y]$ and $[Ty]$ only occurs for periodic orbits which lay in a set of null measure, $\mu(\{y,|[y]\cap [T^ky]|>D\})=0$ for all $k$. Besides,  orbits on the continuous dynamical system $(\M_0,\f^0_t,\LG_0)$ always hit the Poincaré section $M$ in bounded time, thus for all $x \in \Mp$,
$$
|\phi(x)| \leq D
$$
and $V_k^{(x)}$, the set of elements whose trajectory crosses the one from $x$, is also bounded.\\

Hypotheses (c) and (b') are satisfied in both cases thanks to the explicit $\Z$-periodicity of our systems and the projection $\pi_\R$ on position coordinates.\\

Hypothesis (d) holds on both systems and is a straightforward consequence of the perturbation Theorem 
from \cite{guivarch,szasz-varju}.\\

 Hypothesis (e) is a consequence of the perturbation theorems from \cite{guivarch,szasz-varju} which have been adapted into local limit theorem with decorrelation by Yassine in \cite{nasab} in the case of geodesic flows on negatively curved surface, and by Pène and Saussol in \cite{pene-saussol} for planar Lorentz gas : indeed proof of theorem 4.5 from \cite{pene-saussol} can be adapted directly by noticing (using the same notation for Young towers) that in our case the decomposition 
\begin{align*}
\mup(A\cap (S_n=l)\cap \tp^{-n}B)=\frac{1}{2\pi}\int_{[-\pi,\pi]}e^{-iul}\int_{\hat M}P^k(1_{\hat A})e^{iuS_n}1_{\hat B}\circ \hat{T}^{n-k}d\hat{\mu}du,
\end{align*} 
make use of a one dimensional integral in $[-\pi,\pi]$, thus the change of variable $u=(n-2k)^{1/2}v$ provide a normalization in 
$\frac{1}{(n-2k)^{1/2}}$ instead of $\frac{1}{n-2k}$ and a drop of coefficient $\frac{1}{(n-2k)^{1/2}}$ each time.\\


Hypothesis (f) is a known result for both systems but may be seen as a direct consequence of the perturbation Theorem
and \cite{TLL}.\\

Let's show that hypothesis (g) is satisfied :\\

First construct a suitable partition of $\Mp$ :\\
In both systems, since $\Mp$ can be embedded in some compact set, let $\U$ be a finite atlas of open sets $O$ with diameter lesser than the injectivity radius denoted by $R$. Denote $P_\U$ the $\mup$ essential partition made of the interior of the elements of the finite partition $\bigwedge_{O \in \U}O$. To each element $A \in P_\U$ one may associates $x_A \in A$ and a chart $(O_A,\exp_A)$ such that $exp_A(x_A)=0 \in O_A$. Now, fixing $m >0$, let $\tilde \Pa_m$ be the $\mup$ essential partition made of non empty elements
$$
\exp_A^{-1}\left(A\cap \left]\frac{i-1+\epsilon}{m^\alpha},\frac{i+1+\epsilon}{m^{\alpha}}\right[\times \left]\frac{j-1+\epsilon}{m^{\alpha}},\frac{j+1+\epsilon}{m^{\alpha}}\right[\right)
$$ 
for $|i| \leq \frac{3\delta m^\alpha}{4}$.

Such partition is finite and denoting by $N$ the number of elements in $P_\U$,
$$
|\tilde \Pa_m|\leq N\lceil 2R \rceil m^{2\alpha}.
$$ 


From $\tilde \Pa_m$ we construct $\Pa_m$ whose elements are given by the connected components of

$$
A \backslash  R_{-1,0}
$$
for $A\in \Pa_m$ where $R_{-1,0}:=\bigcup_{B \in \xi_{-1}^0}\partial B$. 
By construction we get 
$$
\mup(A) \leq C\frac 1 {m^{2\alpha}}.
$$

The boundaries of the partition $\xi_{-1}^0$ consists of a finite number of $C^1$ curves of finite length which cut $\Mp$ into finitely many connected components. This means in, on the one hand, that the number of elements in $\Pa_m$ grows still uniformly in $O(m^{2\alpha})$ and, on the other hand, that the boundary of $A \backslash R_{-1,0}$ is made of uniformly regular curves and so its length can be bounded above by some value of order  $\frac{1}{m^{\alpha}}$.

Let us remind now that for both systems, there is a constant $0<a<1$ such that the elements of $B \in \xi_{-k}^k$ are of diameter
\begin{align}\label{diam}
\diam(B)=O(a^k),
\end{align}
 
uniformly in $k$. And so for $A \in \Pa_m$,
$$
\mup((\partial A)^{[k]})=O \left(a^k\frac{1}{m^{2\alpha}}\right).
$$

Let $A \in \Pa_m$. The set $A$ is a connected component of
$B(x_A, \epsilon)\backslash R_{-1,0}$. 
Let $x_A$ be a fixed element of $A$ and $\epsilon >0$. For all $y \in A$,


$$
V_k^{(x_A)}\Delta V_k^{(y)} \subset B_A,
$$
where $B_A := \tilde T^{-1}(\pi_{\R}^{-1} \pi_{\R}A)\cup \tilde T^{-1}\left(\pi_{\R}^{-1}\pi_{\R} (T(A))\right) \cup \pi_{\R}^{-1}\pi_{\R} A \cup \pi_{\R}^{-1}\pi_{\R} (T(A))$. Thanks to the regularity of $T$ and of $\tilde T$ ($1/2$-Holder for billiard map and Lipschitz for geodesic flow on connected components of $\xi_{-1}^0$), such a set can be controled as follows
\begin{align*}
B_A \subset \tilde T^{-1} \left(\pi_{\R} ^{-1} \pi_{\R}\left(B(T(x_A),C\epsilon ^{\alpha}\right) \cup B(x_A,\epsilon))\right) \cup \pi_{\R}^{-1}\pi_{\R} \left(B(x_A,\epsilon) \cup B(T(x_A), \epsilon^{\alpha}))\right) .
\end{align*}
 
In other words, a trajectory $[z]$ hits $[y]$ as many times as $[x_A]$ except if one of its extremities is either between $y$ and $x_A$ or between $Ty$ and $Tx_A$. 

Observe that, for $y \in A$,
$$
\bigcup_{m \in \N}(V_m^{(x_A)})^{[k]}\triangle V_m^{(y)}\subset (B_A)^{[k]}
$$
for any $k$ arbitrarily large.\\

Noticing that $V_m^{(y)}\backslash V_m^{(x_A)} \subset B_A$,
$$
V_m^{(y)}\backslash (V_m^{(x_A)})^{[k]} \subset (V_m^{(y)}\backslash V_m^{(x_A)})^{[k]}\subset B_A^{[k]}
$$

and,
\begin{align*}
(V_m^{(x_A)})^{[k]}\backslash V_m^{(y)} &\subset (V_m^{(x_A)})^{[k]}\backslash V_m^{(x_A)} \cup V_m^{(x_A)} \backslash V_m^{(y)}\\ 
&\subset (\partial V_m^{(x_A)})^{[k]} \cup V_m^{(x_A)} \backslash V_m^{(y)}.
\end{align*}
Where $\partial V_m^{(x_A)}$ stands for the boundary seen within $\bigcup_{B \in \xi_{-1}^0}B$ which is
$$
\partial V_m^{(x_A)} = \tilde T^{-1}(\pi_{\R}^{-1} \pi_{\R}(x_A))\cup \tilde T^{-1}\left(\pi_{\R}^{-1}\pi_{\R} (T(x_A))\right) \cup \pi_{\R}^{-1}\pi_{\R} (x_A) \cup \pi_{\R}^{-1}\pi_{\R} (T(x_A)) \subset B_A.
$$
And so

$$
(V_m^{(x_A)})^{[k]}\backslash V_m^{(y)} \subset B_A^{[k]}.
$$

Thanks to \eqref{diam}, $B_A^{[k]}$ is bounded from above by

$$
B_A^{[k]}\subset  \tilde T^{-1} \left(\pi_{\R} ^{-1} \pi_{\R}(B(T(x),C(\epsilon ^{\alpha}+Ca^{\alpha k})) \cup B(x,\epsilon+Ca^k))\right) \cup \pi_{\R}^{-1}\pi_{\R} B(x,\epsilon) \cup B(T(x), \epsilon^{\alpha}+Ca^{\alpha k})).
$$

%

And thus, since in both systems the measure is absolutely continuous with respect to the Lebesgue (or Liouville) measure with a bounded density, $\mu(B_A^{[k]})= O\left( \epsilon+C_2a^k\right)^{\alpha}$.\\

Hence we have proved that Both systems satisfy the hypotheses of Theorems \ref{grosthmdiscret} and \ref{grosthmcon}.\\

Now is given some lemma providing explicit value for the constant $e_I$ in both systems :

\begin{lem}\label{esp_bil}
For $x \in M$,
recall
$$
e_I:=\int_{M\times \Mp}|[y]\cap [x]|d\mu(y)d\mu(x)=\int_{\Mp}E_{\mu}\left(\sum_{k=1}^D k 1_{V_k^{(x)}}\right)d\mu(x).
$$
$e_I$ satisfies in both systems
$$
e_I=4\Gamma E_{\mup}( \tau),
$$
where $\Gamma:= \mu(\Mp)$.
\end{lem}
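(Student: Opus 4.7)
The plan is to reduce by Fubini to computing $F(x):=\int_M |[y]\cap[x]|\,d\mu(y)$ for each fixed $x\in\Mp$, show that $F(x)=4\tau(x)$, and then integrate over $\Mp$. Since $\mu$ restricted to the fundamental domain $\Mp$ equals $\Gamma\mup$, this immediately yields $e_I=4\Gamma E_{\mup}(\tau)$. The equality of the two expressions in the statement follows from $\sum_{k\geq 1}k\,\mathbf 1_{V_k^{(x)}}(y)=|[y]\cap[x]|$; by hypothesis (b) this is uniformly bounded by $D$ and $\mu$-supported in a bounded union of copies of $\Mp$, so all integrals are finite.

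For the pointwise formula, I would parametrize the two unit-speed arcs by arc length via $s\mapsto \pi_{\R}\f^0_s(x)$ (for $s\in[0,\tau(x)]$) and $u\mapsto \pi_{\R}\f^0_u(y)$ (for $u\in[0,\tau(y)]$), with unit velocities $w_x(s), w_y(u)$. The map $(u,s)\mapsto \pi_{\R}\f^0_u(y)-\pi_{\R}\f^0_s(x)$ has Jacobian $|\sin\angle(w_y(u),w_x(s))|$, so the coarea formula gives
\begin{align*}
|[y]\cap[x]|=\int_0^{\tau(y)}\!\!\int_0^{\tau(x)} \bigl|\sin\angle(w_y(u),w_x(s))\bigr|\,\delta_{\R}\bigl(\pi_{\R}\f^0_u(y)-\pi_{\R}\f^0_s(x)\bigr)\,du\,ds,
\end{align*}
where $\delta_{\R}$ denotes the Dirac mass against the Riemannian area on $\R$. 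I then apply the Kac identity $d\mu(y)\,du=d\LG_0(z)$ with $z=\f^0_u(y)$, valid in each of the two examples: for the billiard it is the explicit Jacobian computation $\cos\theta\,dr\,d\theta\,du = dq\,dv$, and for the geodesic flow it is precisely the definition of $\mu_0$ as a local product of position area and angle on $\Mp$ recalled in Section~\ref{example}. Substituting and integrating out the position Dirac reduces this to
\begin{align*}
F(x)=\int_0^{\tau(x)}\int_{\mathbb S^1} \bigl|\sin\angle(v,w_x(s))\bigr|\,dv\,ds = 4\tau(x),
\end{align*}
since $\int_0^{2\pi}|\sin(\alpha-\alpha_0)|\,d\alpha=4$ for every $\alpha_0$.

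The main obstacle is to make the coarea/transversality step rigorous: one must exhibit a $\mu\otimes\mu$-null exceptional set of pairs $(x,y)$ containing all tangential intersections, endpoint coincidences ($u\in\{0,\tau(y)\}$ or $s\in\{0,\tau(x)\}$), and pairs sharing a trajectory, and to handle the caustic/discontinuity loci of $\f^0$. In the negatively curved geodesic setting this is automatic because distinct geodesics on a surface meet only transversally. In the finite-horizon Lorentz gas, tangential intersections of two straight segments are cut out by finitely many real-analytic equations in the Poincaré section coordinates and hence lie on a codimension-one, $\mu\otimes\mu$-negligible subvariety; the boundary and caustic contributions are similarly negligible. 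Once this is in place, the computation above gives $e_I=4\Gamma E_{\mup}(\tau)$.
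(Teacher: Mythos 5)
Your proof is correct but takes a genuinely different route from the paper. The paper computes $F(x):=\int_M|[y]\cap[x]|\,d\mu(y)=4\tau(x)$ via a Kac-type argument: it enlarges the billiard by adjoining the segment $[x]$ as a new two-sided obstacle, yielding a Poincar\'e section $M'=M\cup([x]\times\mathbb S)$, observes that the first return time of the enlarged map to $M$ equals $k+1$ on $V_k^{(x)}$, and identifies $\int_M\sum_k k\,1_{V_k^{(x)}}\,d\mu=\mu'(M')-\mu'(M)$ with $\mu'([x]\times\mathbb S)=4\tau(x)$, the factor $4$ arising from $\int_{-\pi/2}^{\pi/2}\cos\theta\,d\theta=2$ counted once per side of the segment. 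You obtain the same $F(x)$ by a coarea/disintegration computation: you express $|[y]\cap[x]|$ as a Dirac-weighted $(u,s)$-integral with the $|\sin|$ Jacobian, convert $d\mu(y)\,du$ into $d\LG_0(z)$ via the product structure of the special flow, and integrate out the positional Dirac to get $\int_0^{2\pi}|\sin\alpha|\,d\alpha=4$; these two computations of the constant are of course the same integral in disguise. What your approach buys is that it makes the geometric origin of the constant $4$ transparent and avoids invoking Kac's identity in a $\sigma$-finite setting, where the displayed relation $\mu'(M')=\int_M\phi_M\,d\mu'$ has both sides formally infinite and needs to be read as a statement about the finite difference. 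What the paper's approach buys is that it entirely sidesteps the transversality and degenerate-locus discussion that you rightly flag as the technical crux of the coarea method --- Kac never needs to know that the intersections are transversal, only that $|[y]\cap[x]|$ is a.e.\ finite, which is hypothesis (b). Both proofs then conclude identically by integrating $F(x)=4\tau(x)$ over $\Mp$ against $\mu|_{\Mp}=\Gamma\mup$.
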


\begin{proof}
Let's build a dynamical system $(M',T',\mu')$ from $(M,T,\mu)$ by adding the abstract obstacle  $[x]:=[\pi_{\R}(x), \pi_{\R}(T(x))]\times \mathbb S$ to the Poincaré section. The measure $\mu'$ coincides with $\mu$ on $M=M' \backslash [x] \times \mathbb{S}$ and is still given in local coordinates by $\mu(dr,d\theta)=cos(\theta)drd\theta$.

Denoting for $y \in M$, 
$$
\phi_M(y):=\inf \{n \in \mathbb N^* , T'^nx \in M\}
$$
then for all $y \in V_k^{(x)}$,
$$
\phi_M(y)=k+1.
$$ 
Kac's formula then gives,

$$
\mu'(M')=\int_M \phi_Md\mu'=\int_M \phi_Md\mu =\mu(M)+\int_{M}\sum_{k=1}^D k 1_{V_k^{(x)}}d\mu.
$$
And so 
$$
E_{\mu}\left(\sum_{k=1}^D k 1_{V_k^{(x)}}\right)=\mu'(M')-\mu'(M)=\mu'([x]\times \mathbb S)=4\tau(x).
$$

\end{proof}

\section{Proof of theorem \ref{grosthmdiscret}.}\label{preuve1}

From now on, for $n \in \N^*$, let $\A_n:=\Pa_{\lfloor n^{1/(20\alpha)}\rfloor}$ and fix for each $A \in \A_n$ some $x_A \in \overset{o}{A}$. Introduce $k_n:=\lfloor\log(n)^2 \rfloor$.

Let us show that it follows from hypothesis (c) that
$\nu_n$ is $\D$ invariant (periodic). So it will be enough to study directly $\nu_n$ over $(\Mp, \tp, \mup)$ :

Indeed for $x \in \Mp$, the orbit may be rewritten
$T^i(x)=\D^{S_i(x)}(\tp^i(x))$.
So $\nu_n$ may be rewritten as a function in $\Mp$ in a way underlying the role of $S_n$ and $\tp^n$ in it behavior :
Thanks to hypothesis (b),
$$
T^i(x) \in V_k^{(T^j(x))}
$$
if and only if
$$
T^i(x) \in \bigcup_{N=-D}^D\left(\D^{(N+S_j(x))}(\Mp)\cap V_k^{(T^j(x))}\right)
$$
if and only if

$$
\left\{ \begin{array}{l c r}
S_i(x)=S_j(x)+N\\
\tp^i(x) \in \Mp\cap \D^{-(N+S_j(x))}(V_k^{(T^j(x))})
\end{array}\right.
$$
in other words, with hypothesis (d),

\begin{align}\label{deux}
\left\{ \begin{array}{l c r}
S_i(x)=S_j(x)+N\\
\tp^i(x) \in \Mp\cap \D^{-N}(V_k^{(\tp^j(x))})
\end{array}\right.
\end{align}

So $\nu_n$ can be fully expressed on $(\Mp,\tp,\mup)$ with comparisons between $\tp^i, \tp^j$ and $S_i,S_j$ :  

$$
\nu_n(x):= \sum_{N=-D}^D\sum_{k=1}^D 2k\sum_{0 \leq i <j \leq n-1} 1_{\{S_i=S_j+N\}}1_{\D^{-N}(V_k^{(\tp^jx)})\cap \Mp}\circ \tp^i(x)+ \sum_{i=0}^{n-1}\nu_1(\tp^ix)
$$

in order to make our hypothesis of decorrelation (e) working, we adapt $\nu_n$ into some approximation according to the partition $\xi_{-k_n}^{k_n}$ :

$$
\mathcal{V}_n(x):= \sum_{N=-D}^D\sum_{k=1}^D2k\sum_{0 \leq i<j \leq n-1} 1_{\{S_i=S_j+N\}}\sum_{A \in \mathcal{A}_n}1_A\circ \tp^j(x)1_{D^{-N}(V_k^{(x_A)})\cap \Mp}\circ \overline{T}^i(x).
$$

Let's give here some sketch of the proof : In \eqref{deux}, hypothesis (f) states that the Birkhoff sum $S_n$ acts like a random walk whereas the decorrelation in hypothesis (e) states that the asymptotic expectation of $e_{(i,j)}:=\sum_{k=1}^D2k\sum_{A \in \mathcal{A}_n}1_A\circ \tp^j(x)1_{D^{-N}(V_k^{(x_A)})\cap \Mp}\circ \overline{T}^i(x)$ is $e_I$ 
(along with some control on its fluctuations). So separating $\V_n$ into an increasing offset part and its fluctuations we get
\begin{align*}
\V_n(x)&=\sum_{N=-D}^D\sum_{0 \leq i<j \leq n-1} 1_{\{S_i=S_j+N\}}e_I + \sum_{N=-D}^D\sum_{0 \leq i<j \leq n-1} 1_{\{S_i=S_j+N\}}(e_{(i,j)}-e_I)\\
\V_n(x) &\simeq \sum_{N=-D}^D\sum_{l \in \Z}N_n(N+l)N_n(l)e_I
\end{align*}
where $N_n(N):=\sum_{i=0}^{n-1} 1_{\{S_i=N\}}$ is the local time of the "random walk" $(S_n)_n$. Combining known results in probability theory \cite{plantard-dombry} with estimates established via the Nagaev-Guivarc'h perturbation method,  the local time of $(S_n)_n$ satisfies some kind of enhanced invariance principle (see appendix \ref{secinvprinc}) and thus converges in distribution to the local time of a Brownian motion
$$
\frac{1}{n^{1/2}}N_{\lfloor nt \rfloor}(\lfloor . n^{1/2} \rfloor) \overset{\mathcal{L_{\mup}}}{\underset {n \rightarrow \infty}{\rightarrow}} (L_t(.))_t.
$$ 
And thus using more accurate limit theorem and given that  that $\sum_{l \in \Z}$ acts like an integral on a process with discrete values,
$$
\frac{1}{n^{3/2}}\V_{\lfloor nt \rfloor}\overset{\mathcal{L_{\mup}}}{\underset {n \rightarrow \infty}{\rightarrow}} \int_{\mathbb{R}}L_t^2(x)dx.
$$
which should prove the theorem.\\

Here is the rigorous proof, we make sure here that $\V_n$ is a good approximation of $\nu_n$ :
 
\begin{prop}\label{esp}

Under the hypotheses of theorem \ref{grosthmdiscret},
\begin{itemize}
\item[(1)]
$$
\|\nu_n - \mathcal{V}_n \|_{L_{\mup}^1}=o(n^{3/2})
$$

\item[(2)]

$$
\lim_n \sum_{A \in \A_n}\sum_{k=1}^Dk \mu(A^{[k_n]})\mu(V_k^{(x_A)})=e_I
$$
where $\Gamma:= \mu(\Mp)$ and $e_I$ is a constant given in theorem 
\ref{grosthmdiscret}.
,

\end{itemize}
\end{prop}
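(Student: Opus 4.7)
My plan is to prove the two assertions separately; both rest on combining hypotheses~(e) and~(g), although Part~(2) is purely expectational while Part~(1) requires taming a double time sum.

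\textbf{Part (1).} I would start by discarding the tail term $\sum_{i=0}^{n-1}\nu_1(\tp^i x)\le Dn$, which is trivially $o(n^{3/2})$ by (b). The remaining discrepancy is what one obtains from $\nu_n$ by summing over $A\in\A_n$ and swapping $V_k^{(\tp^j x)}$ for $V_k^{(x_A)}$ whenever $\tp^j x\in A$. For each cell $A$ I would combine \eqref{eqmajdif} with the boundary estimate \eqref{eqmajmesup} to produce a single set $F_A\subset M$ (independent of $y\in A$) that contains $V_k^{(x_A)}\triangle V_k^{(y)}$ for every $y\in A$ and satisfies $\mu(F_A)\le C(\epsilon_n+a^{k_n})^\alpha$, with $\epsilon_n:=n^{-1/(20\alpha)}$. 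The small cost here is that going from $(V_k^{(x_A)})^{[k_n]}\triangle V_k^{(y)}$ (which is what (g) directly controls) to $V_k^{(x_A)}\triangle V_k^{(y)}$ leaks the thin layer $(V_k^{(x_A)})^{[k_n]}\setminus V_k^{(x_A)}\subset(\partial V_k^{(x_A)})^{[k_n]}$, absorbed into $F_A$ at a price of order $\epsilon_n^\alpha a^{k_n}$.

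Next, by $\tp$-invariance of $\mup$, each individual expectation
\[
E_{\mup}\bigl[1_A\circ\tp^j\cdot 1_{\D^{-N}F_A\cap\Mp}\circ\tp^i\cdot 1_{\{S_j-S_i=-N\}}\bigr]
\]
falls in the scope of hypothesis~(e), which I would apply at time $j-i$ with window $k_n:=\lfloor\log^2 n\rfloor$: the main part is bounded by $C\mup(A)\mu(F_A)/\sqrt{j-i-2k_n}$ and the error by $Ck_n\mup(A)^{1/p}/(j-i-2k_n)$. Summing first over $A$ (using $\sum_A\mup(A)=1$ and, from the cardinality control in (g), $\sum_A\mup(A)^{1/p}\le C\epsilon_n^{-2\alpha(1-1/p)}$) and then over pairs $i<j\le n$ (with $\sum 1/\sqrt{j-i}=O(n^{3/2})$ and $\sum 1/(j-i)=O(n\log n)$), the main contribution becomes $O(n^{3/2}(\epsilon_n+a^{k_n})^\alpha)=o(n^{3/2})$, while the error is $O(k_n\log n\cdot n^{1+(1-1/p)/10})=o(n^{3/2})$ for $p>1$ close enough to $1$. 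Pairs with $j-i<2k_n$ contribute at most $O(nk_n)$ and are negligible. The real obstacle is exactly this bookkeeping: the smallness gained from (g) must beat the $n^{3/2}$ coming from the Gaussian kernel of (e), and this is what dictates the choices of $\epsilon_n$ and $k_n$.

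\textbf{Part (2).} Here my plan is a direct Riemann-sum computation. Since $\mu((\partial A)^{[k_n]})\le C\epsilon_n^\alpha a^{k_n}$ and $|\A_n|=O(\epsilon_n^{-2\alpha})$, replacing $\mu(A^{[k_n]})$ by $\mu(A)=\Gamma\mup(A)$ changes the full sum by $O(\epsilon_n^{-\alpha}a^{k_n})=o(1)$ (since $a<1$ and $k_n=\log^2n$), uniformly in $k\le D$. Setting $f(x):=\sum_{k=1}^Dk\mu(V_k^{(x)})=\int_M|[x]\cap[y]|\,d\mu(y)$, uniformly bounded thanks to (b), the claim reduces to $\Gamma\sum_A\mup(A)f(x_A)\to e_I$. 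Reusing the set $F_A$ from Part~(1), hypothesis~(g) provides the uniform modulus of continuity $|f(x_A)-f(y)|\le D^2\Gamma^{-1}\mu(F_A)=O((\epsilon_n+a^{k_n})^\alpha)\to 0$ for $y\in A$, so $\sum_A\mup(A)f(x_A)\to\int_\Mp f\,d\mup$. Since $\mu$ restricted to $\Mp$ equals $\Gamma\mup$, the limit identifies with
\[
\Gamma\int_\Mp f\,d\mup=\int_\Mp\int_M|[x]\cap[y]|\,d\mu(y)\,d\mu(x)=e_I,
\]
closing the plan.
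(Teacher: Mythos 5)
Your approach is essentially the same as the paper's: use hypothesis (g) to control the $A$-dependent approximation of the $V_k$ sets, feed the resulting events into the mixing local limit theorem (e), and sum the Gaussian kernel and error terms over $(i,j)$ and over cells $A$. The final bookkeeping (including $\sum_A\mu(A)^{1/p}\lesssim\epsilon_n^{-2\alpha(1-1/p)}$, the $O(n^{3/2})$ coming from $\sum 1/\sqrt{j-i}$, and the separate treatment of $j-i\le 2k_n$) also matches what the paper implicitly does, and in fact your explicit $(j-i)^{-1/2}$ bookkeeping is somewhat cleaner than the paper's stated uniform $O(n^{-13/20})$ bound.

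Two small imprecisions are worth flagging. First, in the application of hypothesis (e) you keep the indicator $1_A$ of a $\Pa_\epsilon$-cell, but a cell of $\Pa_\epsilon$ is not a union of elements of $\xi_{-k_n}^{k_n}$, which (e) requires; since you are only after an upper bound, one should replace $1_A\le 1_{A^{[k_n]}}$ (exactly what the paper does in its definition of $\mathcal V_n$), at the harmless price of the boundary term $\mu\bigl((\partial A)^{[k_n]}\bigr)\le C\epsilon_n^\alpha a^{k_n}$. Second, your remark about the ``thin layer'' $(V_k^{(x_A)})^{[k_n]}\setminus V_k^{(x_A)}$ being an extra cost is actually unnecessary: taking $x=y=x_A$ in the inclusion of hypothesis (g) shows this layer is already contained in $B_A^{[k_n]}$, so $V_k^{(x_A)}\triangle V_k^{(y)}\subset B_A^{[k_n]}$ with no further loss; you can take $F_A:=B_A^{[k_n]}$ directly and drop the $\epsilon_n^\alpha a^{k_n}$ side-estimate.
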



\begin{proof} 
The proof of this statement actually follows the ideas of the work by Pène \cite{peneautointer} adapted to the broader scope of our hypotheses of section \ref{main}. The main difference with the work in \cite{peneautointer} resides in the fact that two trajectories starting from the Poincaré section $M$ may intersect several times before reaching the Poincaré section again. For $x\in \Mp$, we adapt the quantity $E_{k,x}:=\{y \in \Mp, |[\pi_{\R}(y),\pi_{\R}(Ty)]\cap [\pi_{\R}(x),\pi_{\R}(Tx)]|=k \}$ into an integrable one and then into a measurable one with respect to the families of partitions $(\xi_{-m}^m)_{m\in \N}$. 
Denote 
\begin{align*}
E^k_{i,n}&:=\left\{x\in\overline M:\  x\in T^{-n}(V_k^{(T^ix)})\right\}\\
&=\bigcup_{N=-D}^D \left\{x\in\overline M: S_n(x)=N+S_i(x),x\in \tp^{-n}(\D^{-N}(V_k^{(\tp^ix)})\cap \Mp) \right\}.
\end{align*}
For $x \in \Mp$ and 
$n \in \N^*$, $\nu_n(x)$ can be rewritten
$$
\nu_n(x) =\sum_{k=1}^Dk\sum_{0<i<j<n-1} 1_{E^k_{i,j}}(x)+ \sum_{i=0}^{n-1}\nu_1(T^ix).
$$

From now on, we denote $V_m^{(A)}:=V_m^{(x_A)}$ for $A\in \A_n$. 
$A \cap E^{k}_{0,n}$ may be approximated by $A^{[k_n]}\cap T^{-n}((V_k^{(A)})^{[k_n]})$ :
$$
\left( A^{[k_n]}\cap T^{-n}((V_k)^{[k_n]}) \right) \Delta \left(A \cap E^{k}_{0,n} \right)\subset \big(((A)^{[k_n]}\backslash (A)^-)\cap T^{-n}(B_A^{[k_n]} \cup (V_k^{(A)})^{[k_n]})\big)\cup A^{[k_n]}\cap T^{-n}(B_A^{[k_n]})
$$

In order to apply hypothesis (e) on sets with shape $A'\cap T^{-n}B'$, we divide them according to copies $D^N\Mp$ of $\Mp$
$$
A'\cap T^{-n}B'=\bigcup_{N=-D}^D  A' \cap \overline{T}^{-n}(\D^{-N}B'\cap \Mp)\cap (S_n=N).
$$

When $E\subset \Mp$, 
 $\mu(E)=\Gamma \mup(E)$.
Thus hypothesis (e) gives
 

\begin{align}
\nonumber\mu(A^{[k_n]}& \cap T^{-n} B_A^{[k_n]})= \sum_{N =- D}^D \Gamma \mup\Big( A^{[k_n]} \cap \{S_n=N\} \cap \overline T ^{-n} (\D^{-N}(B_A^{[k_n]})\cap \Mp)\Big)\\
& \leq \sum_{N=-D}^D \left(\frac{e^{-\frac {N^2} {2\Sigma (n-2k_n)}}}{(2\pi\Sigma (n-2k_n))^{1/2}}\mup( A^{[k_n]}) \mu(B_A^{[k_n]}) +ck_n \mup(A^{[k_n]})^{1/p}(n-2k_n)^{-1} \right)\label{IIII}
\end{align}
And according to upper bounds in (g), since $\mup((\partial A)^{[k_n]})=o(n^{-\beta})$ for all $\beta >0$, 
\begin{align*}
\mup(A^{[k_n]})&\leq \mup(A)+\mup(\partial A)^{[k_n]})=O(n^{-1/10})\\
\mu(B_A^{k_n})&=O(n^{-1/20}).
\end{align*}

And so the second term of \eqref{IIII} is in $O(n^{-1+\frac{1}{9p}})$ with $p >1$ (hypothesis (e)), thus giving an upper bound uniform in the choice of $A$
$$
\mu(A^{[k_n]} \cap T^{-n} B_A^{[k_n]}) =O(n^{-13/20}).
$$

The same reasoning (limit local theorem with decorrelation in hypothesis (e) and then upper bounds from hypothesis (g)) on $((\partial A)^{[k_n]})\cap T^{-n}( B_A^{[k_n]} \cup (V_k^{(A)})^{[k_n]})\big)$ gives

%

$$
\mu\Big((\partial A^{[k_n]})\cap T^{-n}\left(B_A^{[k_n]} \cup (V_k^{(A)})^{[k_n]}\right)\Big)=O(n^{-13/20}).
$$

with uniform upper bound in the choice of $A$. And so
 
\begin{align}\label{appromeas}
\mu \left( \left(A^{[k_n]}\cap T^{-n}((V_k^{(A)})^{[k_n]}) \right) \Delta (A \cap E^{k}_{0,n}) \right)=O(n^{-13/20})
\end{align}

Then the first point of proposition \ref{esp} holds : Indeed,
according to hypothesis (b), $\sum_{i=0}^{n-1}\nu_1(T^ix)=O(n)$ and since for all $x \in \Mp$, $1_{E_{i,j}^k}(x)=1_{E_{0,j-i}^k}\circ \tp^i(x)$,

\begin{align*}
\nu_n(x) &=\sum_{k=1}^Dk\sum_{1\leq i<j\leq n} 1_{E^k_{i,j}}(x)+ \sum_{i=0}^{n-1}\nu_1(T^ix)\\
&=\sum_{k=1}^Dk\sum_{1\leq i<j\leq n} 1_{E^k_{0,j-i}}\circ \tp^i(x)+O(n)\\
&=\sum_{k=1}^Dk\sum_{1\leq i<j\leq n}\sum_{A \in \A_n}1_A\circ \tp^j(x) 1_{E^k_{0,j-i}}\circ \tp^i(x)+O(n).
\end{align*}
Relation \eqref{appromeas}, comparisons series-integrals and the upper bound $|\A_n|=O(n^{1/10})$ then ensure that
\begin{align*}
\|\nu_n-\V_n\|_{L^1} &=\sum_{k=1}^Dk\sum_{1\leq i<j\leq n} \sum_{A \in \A_n}\mu \left( \left(A^{[k_n]}\cap T^{-(j-i)}((V_k^{(x_A)})^{[k_n]}) \right) \Delta (A \cap E^{k}_{0,n}) \right)+ O(n)\\
&=O(n)+O(n^{29/20})=o(n^{3/2}).
\end{align*}

we now show the second point of the proposition :

remind that
$$
e_I:=\int_{\Mp\times M}\left|[\pi_{\R}(y),\pi_{\R}(T(y))] \cap [\pi_{\R}(x), \pi_{\R}(T(x))]\right|d\mu(y) d\mu(x)=\int_{\Mp} \sum_{k=1}^Dk \mu(V_k^{(x)})d\mu(x).
$$
Notice that hypothesis (b) implies $\mu(V_k^{(x_A)})\leq 2D\mu(\Mp)$. Thus applying the inequalities \eqref{eqmajdif} and \eqref{eqmajmesup} from hypothesis (g),
\begin{align}\label{ei}
\left|\sum_{k=1}^Dk \mu(A^{[k_n]})\mu(V_k^{(x_A)})-\int_A \sum_{k=1}^Dk \mu(V_k^{(x)})d\mu(x)\right|&\leq  \sum_{k=1}^Dk\left( 2D\mu(\Mp)\mu((\partial A)^{[k_n]})+\mu(A^{[k_n]})\mu(B_A^{[k_n]}) \right) \nonumber\\
& \leq C\left(\mu(A)n^{-1/20}+a^n\right).
\end{align}
Since $|\A_n|\leq n^{-1/10}$, the above quantity \eqref{ei} sumed over all $A \in \A_n$ tends to $0$ when $n$ goes to infinity. Thus,
$$
e_I:=\int_{\Mp} \sum_{k=1}^Dk \mu(V_k^{(x)})d\mu(x)=\lim_n \sum_{A \in \A_n}\sum_{k=1}^Dk \mu(A^{[k_n]})\mu(V_k^{(x_A)}).
$$
\end{proof}

\subsection{control in $L^1$ norm.}

 Using the local limit result with decorrelation (hypothesis (e)), we will prove in the following lemma that the self intersections number behaves closely up to a constant multiplicative factor, in the $L^1$-sense, to the self intersections $\sum_{l \in \Z}N_n^2(l)$ of $(S_n)_n$, where
$$
N_n(l):=\sum_{i=0}^{n-1} 1_{S_i=l}\, .
$$

\begin{prop}\label{convl2}
With the notations and hypotheses from theorem \ref{grosthmdiscret},
$$
\frac{1}{n^{3/2}} \left\|\nu_n-\Gamma^{-2}e_I \sum_{x \in \Z}N_n^2(x)\right\|_{L^1_{\mup}} \rightarrow 0.
$$
\end{prop}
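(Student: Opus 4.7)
The plan is to reduce the statement to a random-walk comparison via an intermediate ``walk functional''. First I would invoke Proposition~\ref{esp}(1), which provides $\|\nu_n - \V_n\|_{L^1_{\mup}} = o(n^{3/2})$, so that it suffices to establish
$$
\left\|\V_n - \Gamma^{-2} e_I \sum_{l \in \Z} N_n^2(l) \right\|_{L^1_{\mup}} = o(n^{3/2}).
$$
Set $c_N := \sum_{k=1}^{D} k \sum_{A \in \A_n} \mup(A)\,\mup(\D^{-N} V_k^{(x_A)} \cap \Mp)$ for $|N|\le D$; by Proposition~\ref{esp}(2) and the computation $\sum_N \mup(\D^{-N} V_k^{(x_A)}\cap\Mp) = \Gamma^{-1}\mu(V_k^{(x_A)})$, one has $\sum_{N=-D}^D c_N \to \Gamma^{-2}e_I$ as $n\to\infty$. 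Introduce the auxiliary ``walk functional''
$$
\hat \V_n \;:=\; 2 \sum_{N=-D}^D c_N \sum_{0 \leq i < j \leq n-1} 1_{\{S_i - S_j = N\}},
$$
which is a function of the Birkhoff sums $(S_i)_i$ alone, and split the difference as $\V_n - \Gamma^{-2}e_I \sum_l N_n^2(l) = (\V_n - \hat\V_n) + (\hat\V_n - \Gamma^{-2}e_I\sum_l N_n^2(l))$.

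\textbf{Step 1 (dynamical/$L^2$ bound).} I would prove $\|\V_n - \hat\V_n\|_{L^2_{\mup}} = o(n^{3/2})$ by expanding the square into a four-fold sum over pairs $((i,j),(i',j'))$ together with the labels $(N,N',k,k',A,A')$. For 4-tuples whose two time-gaps both exceed $n^{\beta}$ (for a suitable $\beta\in(0,1)$), I would apply hypothesis~(e) twice---around indices $i$ and $i'$---to factorize each covariance into the product of its LLT leading term and a residual of size $k_n\mup(B)^{1/p}/(n-2k_n)$. The LLT leading contributions cancel by the very definition of $c_N$ (the bracket $[Z_{i,j,N}-2c_N]$ has ``mean zero'' in the sense of hypothesis (e)); only the residuals remain, and hypothesis~(g) (giving $|\A_n|\le Cn^{1/10}$ and $\mup(A)=O(n^{-1/10})$, combined with the cancellation of $\mup((\partial A)^{[k_n]})$) yields $o(n^3)$ after summation. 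The short-gap 4-tuples are controlled crudely using $|\V_n|,|\hat\V_n|\le Cn^2$ from hypothesis~(b).

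\textbf{Step 2 (random-walk/$L^1$ bound).} I would compare $\hat\V_n$ with $\Gamma^{-2}e_I \sum_l N_n^2(l)$ using the identity
$$
\sum_{l\in\Z} N_n(l)N_n(l-N) \;=\; \sum_{i<j} 1_{\{S_i-S_j=N\}} + \sum_{i<j} 1_{\{S_i-S_j=-N\}} + n\,1_{\{N=0\}}.
$$
Hypothesis~(f) ($E_{\mup}(|N_n(x)-N_n(y)|^2)=O(\sqrt n|x-y|)$) combined with Cauchy--Schwarz yields, for each $|N|\le D$,
$$
\Bigl\|\sum_l N_n(l)\bigl(N_n(l-N)-N_n(l)\bigr)\Bigr\|_{L^1_{\mup}} \le \|N_n\|_{L^2(\ell^2)} \,\|N_n(\cdot-N)-N_n\|_{L^2(\ell^2)} = O(n^{3/4})\cdot O(n^{1/2}) = o(n^{3/2}),
$$
so $\sum_l N_n(l)N_n(l-N) = \sum_l N_n^2(l) + o_{L^1}(n^{3/2})$. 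Combining with $\sum_N c_N \to \Gamma^{-2}e_I$ recovers $\Gamma^{-2}e_I \sum_l N_n^2(l)$ up to an antisymmetric remainder $\sum_N c_N\bigl(\sum_{i<j} 1_{\{S_i-S_j=N\}} - \sum_{i<j} 1_{\{S_i-S_j=-N\}}\bigr)$; this last term has vanishing mean (by invariance under $S_n\leftrightarrow -S_n$ asymptotically) and an $L^2$ variance of order $n^2$, hence an $L^1$ norm of order $n = o(n^{3/2})$.

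\textbf{Main obstacle.} The delicate point is Step~1: a naive bound on the four-fold sum gives $O(n^3)$ for $\|\V_n-\hat\V_n\|_{L^2}^2$, which only matches---but does not beat---the desired $o(n^{3/2})^2$ threshold. Extracting the extra $o(1)$ factor requires exploiting both the cancellation of the leading LLT term (built into the definition of $\hat\V_n$) and the genuine $k_n/(n-2k_n)$ decay of the remainder in hypothesis~(e), as well as the combinatorial smallness $|\A_n|\,\mup(A)^{1/p} = O(n^{1/10-1/(10p)})$ coming from hypothesis~(g). Treating the short-gap pairs uniformly with the long-gap ones, and handling the fact that the sets $\D^{-N}V_k^{(x_A)}\cap\Mp$ are only approximately unions of elements of $\xi_{-k_n}^{k_n}$ (forcing passage through the enlargements $A^{[k_n]}$ and the error \eqref{eqmajdif}), are the technical hurdles.
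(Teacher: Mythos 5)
Your decomposition matches the paper's proof almost step by step: you first invoke Proposition~\ref{esp}(1) to replace $\nu_n$ by $\V_n$, then perform an $L^2$ decorrelation step factoring the local limit theorem (this is exactly Lemma~\ref{etapd}, which rests on the fourfold-sum estimate of Lemma~\ref{decorgr}), then an $L^1$ local-time step comparing $\sum_x N_n(x)N_n(x+N)$ to $\sum_x N_n^2(x)$ via Cauchy--Schwarz and hypothesis~(f) (this is Lemma~\ref{tploc}, which also uses Billingsley's maximal inequality to control $\sup_k |S_k|$, a point you do not spell out but implicitly rely on), and finally Proposition~\ref{esp}(2) to identify the coefficient. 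The organization through an auxiliary walk functional $\hat\V_n$ is cosmetically different from the paper's choice of $\sum_N c_N\sum_x N_n(x)N_n(x+N)$ as the intermediate, but it is combinatorially equivalent.

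Two caveats deserve flagging. First, your $c_N$ is written with $\mup(A)$ and $\mup(\D^{-N}V_k^{(x_A)}\cap\Mp)$ rather than with the $\xi_{-k_n}^{k_n}$-measurable enlargements $\mup(A^{[k_n]})$ and $\mup(C^k_{A,N})$; you acknowledge this at the end, and indeed without the enlargements hypothesis~(e) does not apply directly, so in a fully written proof the intermediate quantity must carry the $[k_n]$-hulls exactly as the paper does. Second, and more substantively, your treatment of the antisymmetric remainder $\sum_N c_N(a_N-a_{-N})$ is only a sketch: the ``vanishing mean by invariance under $S_n\leftrightarrow -S_n$'' is not a literal symmetry of the step function $\phi$, and the asserted $L^2$ variance of order $n^2$ is not justified. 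In fact $E[a_N]\sim c\,n^{3/2}$, so the crude $L^1$ bound is of the \emph{same} order as the threshold, and one genuinely needs the cancellation between $E[a_N^2]$, $E[a_Na_{-N}]$, $E[a_{-N}^2]$ extracted from the LLT-with-decorrelation (i.e.\ an argument of the same flavor as Lemma~\ref{decorgr}) to beat $n^{3/2}$. Also, your intermediate estimate $\|N_n(\cdot-N)-N_n\|_{L^2(\ell^2)} = O(n^{1/2})$ is too optimistic: the truncation argument in \eqref{eqdifcare} yields only $o(n^{3/4})$ (optimizing the free exponent $a$ gives roughly $O(n^{2/3}\sqrt{\log n})$); fortunately the product with $O(n^{3/4})$ is still $o(n^{3/2})$, so the conclusion survives.
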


Thanks to Proposition \ref{esp} it is enough to study the $L^1_{\mup}$ convergence of
$$
\mathcal{V}_n:= \sum_{|N|\leq D} \sum_{A \in  \A_n}\sum_{k=1}^D 2\sum_{0\leq i< j \leq n-1}k1_{A^{[k_n]}} \circ \tp^i 1_{C^k_{A,N}} \circ \tp^j 1_{S_{j-i}=N}\circ \tp^i,
$$
where $C^{k}_{A,N}:=\D^{-N}((V_k^{(A)})^{[k_n]})\cap \Mp$.\\


The proof of Proposition~\ref{convl2} will be given in Section~\ref{proofProp2Steps} and will rely on two technical steps.

Step 1 (proved in Lemma~\ref{etapd}) will show the following convergence in $L_{\overline{\mu}}^2$  
$$
\left( \frac{1}{n^{3/2}}\mathcal{V}_n -\frac{1}{n^{3/2}}\sum_{|N|\leq D}\sum_{k=1}^D k \sum_{A \in \mathcal{A}_n} \mup(A^{[k_n]}) \mup(C^k_{A,N})\sum_{x \in \mathbb Z}N_n(x)N_n(x+N)\right)\underset{L^2}{\rightarrow} 0.
$$

And Step 2 (proved in Lemma~\ref{tploc}) consists in proving the asymptotic identification in $L^1_{\mup}$ 
\begin{align*}
&E_{\mup}\Big(\frac{1}{n^{3/2}}\left|\sum_{|N|\leq D}\sum_{k=1}^Dk \sum_{A \in \mathcal{A}_n} \sum_{0\leq i,j \leq n-1} \mup(A^{[k_n]}) \mup(C^k_{A,N})1_{\{S_{j-i}=N\}} \circ \tp^i \right.\\
&\left.-\sum_{k=1}^D k\sum_{|N|\leq D} \sum_{A \in \mathcal{A}_n} \mup(A^{[k_n]}) \mup(C^k_{A,N})\sum_{x \in  \Z}N_n(x)^2\right|\Big)\rightarrow 0.
\end{align*}

The probabilistic convergence of the local time $N_n(.)$ will enable us to conclude.
\subsection{Technical Lemma}
 In order to prove proposition \ref{convl2}, we establish the following decorrelation lemma using hypothesis (e). Notice that the error made in the decorrelation involved in (e) are not summable but, applying it iteratively, we still obtain decorrelation result with an appropriate error term in $o(n^3)$.



\begin{lem}\label{decorgr}
Given integers $N,N' \leq D$, and functions $f,g,f',g'$ so that one of them could be written as $1_A-\mup(A)$ where $A$ is a union of elements from $\xi_{-k_n}^{k_n}$, the others being linear combination of elements $1_B$ with $B \in \xi_{-k_n}^{k_n}$, then

\begin{align*}
\sum_{0 \leq i,j,k,l \leq n-1}E_{\mup} \left(\left(f\circ \tp^{j-i}1_{S_{j-i=N}}g\right)\circ \tp^i \left(g'f'\circ \tp^{l-k}
1_{S_{l-k=N'}}\right)\circ \tp^k\right)=o(n^3),
\end{align*}
with $o(n^3)$ uniform in $\|f\|_{\infty}$, $\|f'\|_{\infty}$, $\|g\|_{\infty}$ and $\|g'\|_{\infty}$.

\end{lem}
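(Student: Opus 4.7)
The approach is to use the centering of one of the four functions (say $g=1_A-\mup(A)$, so $\mup(g)=0$) to kill the main term coming from the local limit theorem with decorrelation in hypothesis (e), and then sum the resulting error terms. I would first invoke the symmetry exchanging the pairs $\bigl((i,j),N,f,g\bigr)\leftrightarrow\bigl((k,l),N',f',g'\bigr)$ to reduce to $i\le k$, and split the sum according to the ordering of $\{i,j,k,l\}$ into three main cases: (a) $i\le j\le k\le l$, (b) $i\le k\le j\le l$, (c) $i\le k\le l\le j$. A cutoff $T_n:=\lfloor n^{1/3}\rfloor$ then separates two regimes. In the small-gap regime, where at least one consecutive gap between the sorted indices is $\le T_n$, the local limit theorem from (e) gives $|E_{\mup}[\cdots]|=O\bigl((m_1 m_3)^{-1/2}\bigr)$ with $m_1=j-i$ and $m_3=l-k$, and summing over quadruples (using $\sum_{m\ge 1}m^{-1/2}=O(\sqrt n)$) produces $o(n^3)$.

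In the large-gap regime (all consecutive gaps $\ge T_n$), I use $\tp$-invariance to shift the smallest index to $0$ and apply hypothesis (e) directly. In case (a), the expectation takes the form $E_{\mup}[g\cdot F]$, where $F$ is built from the three other functions and the two Birkhoff-sum indicators. Decomposing $g$ and $F$ into their level sets and applying (e) to each pair yields
\[
E_{\mup}[g\cdot F]=\frac{e^{-N^2/(2\Sigma(m_1-2k_n))}}{\sqrt{2\pi\Sigma(m_1-2k_n)}}\,\mup(g)\,E_{\mup}[F]+\text{error},
\]
and the main term vanishes thanks to $\mup(g)=0$. The error is controlled by $O\bigl(k_n/(m_1-2k_n)\bigr)\cdot\mup(B)^{1/p}$, where the set $B$ encoding the future of $F$ carries the indicator $1_{S_{m_3}=N'}$ and therefore has measure $O(1/\sqrt{m_3})$ by the local limit theorem, so that $\mup(B)^{1/p}=O(m_3^{-1/(2p)})$. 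Summing over $(m_1,m_2,m_3,\text{start})$ produces a total error $O\bigl(k_n\,n^{3-1/(2p)}\log n\bigr)=o(n^3)$, since $p\in(1,\sqrt{3/2})$ forces $1/(2p)>0$. Case (b) is handled analogously by decorrelating across the gap $k-j$.

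The nested case (c) is the main obstacle, since the outer Birkhoff sum $S_{j-i}=N$ envelops the inner one $S_{l-k}=N'$, preventing a direct application of hypothesis (e) to the first gap. I would circumvent this by decomposing
\[
1_{S_{n_1+n_2+n_3}=N}=\sum_{|P|\le Dn_1}1_{S_{n_1}=P}\cdot 1_{S_{n_2+n_3}=N-P}\circ\tp^{n_1},
\]
with the range bound on $P$ coming from hypothesis (b), and then applying hypothesis (e) on the first gap $n_1=k-i$ for each fixed $P$. The main terms vanish by $\mup(g)=0$, and only the (e)-errors survive; these are controlled via Hölder on the $P$-sum, since the persistent indicator $1_{S_{n_2}=N'}$ forces $\sum_P \mup(\cdot)=O(1/\sqrt{n_2})$, yielding $\sum_P\mup(\cdot)^{1/p}=O\bigl(n_1^{1-1/p}\,n_2^{-1/(2p)}\bigr)$. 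The resulting total error is $O\bigl(k_n\,n^{4-3/(2p)}\bigr)$, which remains $o(n^3)$ because $p<\sqrt{3/2}<3/2$ forces $3/(2p)>1$.
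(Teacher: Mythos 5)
Your overall architecture — split the quadruples by the relative ordering of $i,j,k,l$ (modulo the pair-exchange symmetry), dispose of the small-gap regime by counting, apply hypothesis (e) in the large-gap regime so that a $\mup$ of the centered function kills the main term, and sum the $\mup(B)^{1/p}$-type errors — is essentially the same as the paper's, and the nested case (c) decomposition $1_{S_{n_1+n_2+n_3}=N}=\sum_P 1_{S_{n_1}=P}\,1_{S_{n_2+n_3}=N-P}\circ\tp^{n_1}$ is exactly the paper's \eqref{bleu}. Where you genuinely diverge is in the error control: instead of iterating (e) three times across the three consecutive gaps (which is what the paper does, producing exponents $1/p$ and $1/p^2$ and hence needing the full strength of $p<\sqrt{3/2}$), you apply (e) only across the first gap and then control $\mup(B)^{1/p}$ by the bare local-limit bound $\mup(S_{m_3}=N')=O(m_3^{-1/2})$ plus a H\"older inequality over the intermediate value $P$. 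That route is cleaner and, as you note, only needs $p<3/2$.

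However, there is a real gap. You write ``say $g=1_A-\mup(A)$,'' i.e.\ you assume the centered function sits at the \emph{earliest} time $i$; a single application of (e) across the first gap then produces the factor $\mup(g)=0$ and the main term dies. But the lemma is stated (and is actually used in the proof of Lemma~\ref{etapd}) for the case where the centered function is one of the \emph{later} ones: in the estimate of $E_2$ the centered factor is $1_{C^k_{A,N}}-\mup(C^k_{A,N})$, which sits at time $j$, while the function at time $i$ is the constant $\mup(A^{[k_n]})$. In that situation a single application of (e) across the first gap gives a main term proportional to $\mup(g_{(i)})\cdot\mup(F)$ with $\mup(g_{(i)})\ne 0$, and to see that $\mup(F)$ is small you must apply (e) again inside $F$ (which, since $F$ carries no $1_{S_{m_2}=\cdot}$ factor, first requires inserting $\sum_r 1_{S_{m_2}=r}$). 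If one instead bounds $\mup(F)$ crudely by $O(m_3^{-1/2})$, the resulting main-term contribution $\sum m_1^{-1/2}m_3^{-1/2}$ is $O(n^3)$, not $o(n^3)$, so the trivial bound does not close the argument. This is precisely the point where the paper's iterated decorrelation is forced, and where the constraint $p<\sqrt{3/2}$ (rather than $p<3/2$) enters. The pair-exchange symmetry reduces to ``centered $=f$ or $g$'' but not further, so the case ``centered at time $j$'' cannot be waved away.

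Two smaller imprecisions, not fatal but worth flagging: the blanket claim $|E_{\mup}[\cdots]|=O((m_1m_3)^{-1/2})$ in the small-gap regime only holds when \emph{both} outer gaps exceed $2k_n$; when one of them is the small gap you should use $O(m_1^{-1/2})$ or $O(m_3^{-1/2})$ alone (this still sums to $o(n^3)$ with your cutoff $T_n=n^{1/3}$, but the statement as written is not uniform). And in case (b) the gap you decorrelate across is $j-k$, not $k-j$; this case has the two Birkhoff windows overlapping and deserves more than ``handled analogously,'' since the decomposition in $r$ and the subsequent H\"older/Gaussian-sum bookkeeping are the most delicate of the three orderings.
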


In order to prove this, we use the intermediate lemma,

\begin{lem}\label{somr}
There is a constant $C_0>0$ such that for all $a \geq 1$,
$$
\sum_{|r|\leq a}e^{-r^2/a}\leq C_0a^{1/2}.
$$
\end{lem}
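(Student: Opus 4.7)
The lemma is an elementary Gaussian sum estimate, so the proof is a short calculation rather than a deep argument. My plan is to compare the discrete sum with a Gaussian integral by monotonicity.

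First I would observe that the summand $r \mapsto e^{-r^2/a}$ is even in $r$, and that on $[0,\infty)$ the continuous extension $x \mapsto e^{-x^2/a}$ is nonnegative and monotonically decreasing. Hence for any integer $r \geq 1$ one has $e^{-r^2/a} \leq \int_{r-1}^{r} e^{-x^2/a}\,dx$, and therefore
\[
\sum_{r=1}^{\lfloor a \rfloor} e^{-r^2/a} \;\leq\; \int_{0}^{\infty} e^{-x^2/a}\,dx \;=\; \frac{\sqrt{\pi a}}{2}.
\]

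Next I would sum the two halves together with the $r=0$ term:
\[
\sum_{|r|\leq a} e^{-r^2/a} \;\leq\; 1 + 2\sum_{r=1}^{\lfloor a \rfloor} e^{-r^2/a} \;\leq\; 1 + \sqrt{\pi a}.
\]
Since $a \geq 1$, we have $1 \leq \sqrt{a}$, so the right-hand side is bounded by $(1 + \sqrt{\pi})\sqrt{a}$. Taking $C_0 := 1 + \sqrt{\pi}$ (or any larger constant) gives the claim.

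There is essentially no obstacle here: the only thing to be a bit careful about is that the sum index ranges over $|r| \leq a$ rather than all of $\mathbb Z$, but the Gaussian integral over $\mathbb R$ is an even simpler upper bound, so extending the range costs nothing. The value of $C_0$ is absolute and independent of $a$, which is all that is needed for the subsequent application in Lemma \ref{decorgr}.
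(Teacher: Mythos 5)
Your proof is correct and follows essentially the same route as the paper's: compare $e^{-r^2/a}$ with $\int_{r-1}^r e^{-x^2/a}\,dx$ for $r\ge 1$, sum to get a Gaussian integral bound, and absorb the $r=0$ term using $a\ge 1$. You are in fact slightly more careful than the paper about accounting explicitly for the $r=0$ contribution, but the argument is the same.
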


\begin{proof}
Comparison series/integrals through Riemann series, give for all $r \geq 1$,
$$
e^{-r^2/a} \leq \int_{r-1}^{r}e^{-t^2/a}dt,
$$
summing over $r$
$$
\frac{1}{(\pi a)^{1/2}}\sum_{1\leq |r| \leq a}e^{-r^2/a}\leq \frac{1}{(\pi a)^{1/2}} \int_{-\infty}^{\infty}e^{-t^2/a}dt.
$$
The change of variable $u=\frac{t}{a^{1/2}}$ gives the conclusion.
\end{proof}

\begin{proof}[proof of lemma \ref{decorgr}]
In order to prove the lemma it is enough to consider characteristic functions $1_A$ where $A$ is a union of elements of $\xi_{-k_n}^{k_n}$ and since one of the function is supposed to have zero mean, to decompose  
\begin{align}
\sum_{0 \leq i,j,k,l \leq n-1}&E_{\mup} \left(\left(f\circ \tp^{j-i}1_{S_{j-i=N}}g\right)\circ \tp^i \left(g'f'\circ \tp^{l-k}
1_{S_{l-k=N'}}\right)\circ \tp^k\right)= \nonumber \\
&\sum_{0 \leq i,j,k,l \leq n-1}a_{i,j,k,l}\mup(g)\mup(f)\mup(g')\mup(f')+o(n^3)\label{ideal}
\end{align}

The dominating term would then vanish after recombination thanks to the zero mean of one of the map $f,f',g$ or $g'$.\\
In order to obtain this, we use hypothesis (e) which makes it necessary to fix an order between the indexes. Since the proof goes roughly the same way for each ordering of the indexes, we only treat the case when $0\leq i \leq k \leq j \leq l$, the two other cases are done in appendix \ref{tecnic}.

Notice that the total number of indexes $i,j,k,l$ such that two of the following terms $k-i$, $j-k$ and $l-k$ are less than $2k_n$ is bounded from above by $3n^2(2k_n)^2$. Since the expectancy in the left-hand side of equation \eqref{ideal} is bounded from above by $\|f\|_\infty\|g\|_\infty\|f'\|_\infty\|g'\|_\infty$, The sum over these indexes is in $O(n^2k_n^2)=o(n^3)$.\\
Thus we consider only the indexes such that at most one of the terms $k-i$, $j-k$ and $l-k$ goes below $2k_n$.
When $k-i\geq 2k_n+1$ one can use the $\tp$-invariance of $\mup$ and hypothesis (e) to get the following formula,

\begin{align}
&E_{\mup} \left((f\circ \tp^{j-i}1_{S_{j-i}=N}g)\circ \tp^i (g'f'\circ \tp^{l-k}
1_{S_{l-k=N'}})\circ \tp^k\right) \nonumber\\
&=\sum_{|r| \leq \min(k-i+1,l-k+1,j-l+1)D} E_{\mup} \left(1_{S_{k-i}=N-r}g (g'f'\circ \tp^{l-k}
1_{S_{l-k}=N'-r}f\circ \tp^{j-k} 1_{S_{j-k}=r})\circ \tp^{k-i}\right) \nonumber\\
&=\sum_{|r| \leq \min(k-i+1,l-k+1,j-l+1)D}\left(\mup(g)\mup(A_r)\frac{e^{-(N-r)^2/((2\Sigma(k-i-2k_n))}}{(2\pi\Sigma)^{1/2}(k-i-2k_n)^{1/2}} \pm \frac{ck_n\mup(A_r)^{1/p}}{k-i-2k_n} \right)\label{fond4}
\end{align}
where $A_r:=g'f'\circ \tp^{l-k}(1_{S_{l-j=N'-r}}f)\circ \tp^{j-k} 1_{S_{j-k}=r}$.\\ 
When $k-i \leq 2k_n$, the expectancy is bounded from above by $\sum_{|r| \leq \min(k-i+1,j-k+1,l-j+1)D}\mup(A_r)$.\\

Passing from first to second line in \eqref{fond4} is due to the following decomposition,
\begin{align*}
&\tp^{-i}(S_{j-i}=N)\cap \tp^{-k}(S_{l-k}=N')\\
&=\bigsqcup_{r \in \Z} \tp^{-i}(\{S_{k-i}=N-r\} \cap \tp^{-(k-i)}\{S_{j-k}=r\})\cap \tp^{-j}\{S_{l-j}=N'-r\}).\\
\end{align*}

Since $\phi$ is bounded above by $D$, 
$$
\|S_n\|_{\infty}\leq nD
$$
And thus, $\{S_{k-i}=N-r,S_{j-k}=r, S_{l-k}=N'-r\}$ is non empty if and only if $N-r\leq (k-i)D$, $r\leq (j-k)D$ and $N'-r\leq (l-k)D$
which means
\begin{align*}
&\tp^{-i}(S_{j-i}=N)\cap \tp^{-k}(S_{l-k}=N')\\
&=\bigsqcup_{r \leq \min(k-i+1,l-k+1,j-k+1)D}\tp^{-i}(S_{k-i}=N-r)\cap \tp^{-k}(S_{j-k}=r) \cap \tp^{-j}(S_{l-j}=N'-r).
\end{align*}
Giving the sum in $r$ in \eqref{fond4}.\\

Hypothesis (e) also applies on the expression of $A_r$ whenever $j-k\geq 2k_n$~:

\begin{align}
\mup(A_r)&=\mup \left(g'1_{S_{j-k=r}}\left(ff'\circ \tp^{l-j} 1_{S_{l-j=N'-r}}\right)\circ \tp^{j-k}\right)\nonumber\\
&=\mup(g')\mup(B_r)\frac{e^{-r^2/(2\Sigma(j-k-2k_n))}}{(2\pi\Sigma)^{1/2}(j-k-2k_n)^{1/2}}\pm \frac{ck_n\mup(B_r)^{1/p}}{j-k-2k_n}\label{eqaproa}\\
\label{aproa}&\leq \frac{(1+ck_n)\mup(B_r)^{1/p}}{(j-k-2k_n)^{1/2}}
\end{align}
where $B_r:=ff'\circ \tp^{l-j} 1_{S_{l-j=N'-r}}$. In the other hand, when $j-k\leq 2k_n$, $\mup(A_r)\leq \mup(B_r)$.
Then applying again hypothesis (e) on $B_r$ whenever $l-j\geq 2k_n+1$, 

\begin{align}
\mup(B_r)&=\frac{\mup(f')\mup(f)e^{-(N'-r)^2/(2\Sigma(l-j-2k_n))}}{(2\pi\Sigma)^{1/2}(l-j-2k_n)^{1/2}} \pm \frac{ck_n\mup(f)^{1/p}}{l-j-2k_n} \label{eqaprob}\\
\label{aprob} &\leq \frac{1+ck_n}{(l-j-2k_n)^{1/2}}.
\end{align}

Thus, when $k-i\leq 2k_n$, the next sum is bounded above thanks to inequalities \eqref{aproa} and \eqref{aprob} :
\begin{align*}
&\sum_{i=0}^{n-1}\sum_{k=i}^{i+2k_n}\sum_{j=k}^{n-1}\sum_{l=j}^{n-1}E_{\mup} \left(\left(f\circ \tp^{j-i}1_{S_{j-i}=N}g\right)\circ \tp^i \left(g'f'\circ \tp^{l-k} 
1_{S_{l-k}=N'}\right)\circ \tp^k\right)\\
\leq & \sum_{i=1}^n\sum_{k=i}^{i+2k_n}\sum_{j=k}^n\sum_{l=j}^n  \mup(A_r)
\end{align*}
\eqref{aproa} gives $\mup(A_r) \leq \frac{(1+ck_n)\mup(B)^{1/p}}{(j-k-2k_n)^{1/2}}\leq \frac{(1+ck_n)}{(j-k-2k_n)^{1/2}}$ whenever $j-k\geq 2k_n+1$, and thus
$$
\sum_{i=0}^{n-1}\sum_{k=i}^{i+2k_n}\sum_{j=k}^{n-1}\sum_{l=j}^{n-1}  \mup(A_r) \leq \sum_{i=1}^n\sum_{k=i}^{i+2k_n}\sum_{j=k+2k_n}^{n-1}\sum_{l=j}^{n-1}  \frac{(1+ck_n)}{(j-k-2k_n)^{1/2}} +2k_n^2n^2=o(n^3).
$$
When $k \geq i+2k_n$ one may apply relation \eqref{fond4} along with the upper bound of $\mup(A_r)$ given by \eqref{aproa} and \eqref{aprob} to obtain
$$
E_{\mup} ((f\circ \tp^{j-i}1_{S_{j-i}=N}g)\circ \tp^i (g'f'\circ \tp^{l-k}
1_{S_{l-k=N'}})\circ \tp^k)\leq 2D \min(k-i+1,l-k+1,j-l+1)\frac{(1+ck_n)}{(k-i-2k_n)^{1/2}}.
$$
Thus whenever  $k \geq i+2k_n$ and $j-k\leq 2k_n$,
 
$$
\sum_{i=0}^{n-1}\sum_{k=i+2k_n}^{n-1}\sum_{j=k}^{k+2k_n}\sum_{l=j}^{n-1}E_{\mup} \left(\left(f\circ \tp^{j-i}1_{S_{j-i}=N}g\right)\circ \tp^i \left(g'f'\circ \tp^{l-k} 
1_{S_{l-k}=N'}\right)\circ \tp^k \right)=o(n^3)
$$

and whenever  $k \geq i+2k_n$ and $l-j\leq 2k_n$

$$
\sum_{i=0}^{n-1}\sum_{k=i+2k_n}^{n-1}\sum_{j=k}^{n-1}\sum_{l=j}^{j+2k_n}E_{\mup} \left(\left(f\circ \tp^{j-i}1_{S_{j-i}=N}g \right)\circ \tp^i \left(g'f'\circ \tp^{l-k} 
1_{S_{l-k}=N'}\right)\circ \tp^k \right)=o(n^3).
$$
The only terms left are those for which $k-i>2k_n$, $j-k>2k_n$ and $l-k>2k_n$. In order to get relation \eqref{ideal}  we apply first the decorrelation \eqref{fond4} then replace the terms $A_r$ with relation \eqref{eqaproa} and $B_r$ through relation \eqref{eqaprob}:

\begin{align}
&\mup(g)\mup(A_r)\frac{e^{-(N-r)^2/((2\Sigma)(k-i-2k_n))}}{(2\pi\Sigma)^{1/2}(k-i-2k_n)^{1/2}} \pm \frac{ck_n\mup(A_r)^{1/p}}{k-i-2k_n}\nonumber\\
&= \mup(g)\left(\mup(g')\mup(B_r)\frac{e^{-r^2/((2\Sigma)(j-k-2k_n))}}{(2\pi\Sigma)^{1/2}(j-k-2k_n)^{1/2}}\pm \frac{ck_n\mup(B_r)^{1/p}}{j-k-2k_n}\right)\frac{e^{-(N-r)^2/(2\Sigma(k-i-2k_n))}}{(2\pi\Sigma)^{1/2}(k-i-2k_n)^{1/2}}\nonumber \\
&\pm \frac{ck_n}{k-i-2k_n}\left(\frac{(1+ck_n)\mup(B_r)^{1/p}}{(j-k-2k_n)^{1/2}}\right)^{1/p}\nonumber\\
&=\frac{e^{-(N-r)^2/(2\Sigma(k-i-2k_n))}}{(2\pi\Sigma)^{1/2}(k-i-2k_n)^{1/2}}\mup(g)\Big(\mup(g')\mup(f')\mup(f)\frac{e^{-r^2/(2\Sigma(j-k-2k_n))}e^{-(N'-r)^2/(2\Sigma(l-j-2k_n))}}{(2\pi\Sigma)^{1/2}(j-k-2k_n)^{1/2}(l-j-2k_n)^{1/2}} \label{princip} \\
&\pm \frac{ck_n\mup(g')\mup(f)^{1/p}}{(2\pi\Sigma)^{1/2}(j-k-2k_n)^{1/2}(l-j-2k_n)} \pm \frac{ck_n(1+ck_n)^{1/p}}{(j-k-2k_n)(l-j-2k_n)^{1/(2p)}}\Big) \nonumber \\
& \pm 2\frac{ck_n}{k-i-2k_n}\left(\frac{(1+ck_n)}{(j-k-2k_n)^{1/2}}\right)^{1/p}\Big(\left(\frac{\mup(f)\mup(f')e^{-(N'-r)^2/(2\Sigma(l-j))}}{(l-j-2k_n)^{1/2}}\right)^{1/p^2}\pm \left(\frac{ck_n\mup(f)^{1/p}}{l-j-2k_n}\right)^{1/p^2}\Big)\label{erreur}
\end{align}

Except for the main term \eqref{princip} which has the shape desired in \eqref{ideal}, all the others terms give an upper bound in $o(n^3)$ when summed over $0\leq i \leq k \leq j \leq l \leq n-1$ and\\
$r \leq D \min(k-i,j-k,l-j)$. Here the proof is given for the first term (the one with exponential factor) in \eqref{erreur} other terms are treated the same way :\\
We denote $u=k-i$,$v=j-k$ and $w=l-j$, and recall that $p <\left(\frac 3 2 \right)^{1/2}$,

\begin{align}
\sum_{i=0}^{n-1} \sum_{u=k_n}^{n-1}&\sum_{v=k_n}^{n-1}\sum_{w=k_n}^{n-1}\sum_{r=0}^w \frac{k_n^2e^{-(N'-r)^2/(2p^2\Sigma w)}}{uv^{1/p}w^{1/(2p^2)}}=\nonumber\\
&=\sum_{i=0}^{n-1} \sum_{u=k_n}^{n-1}\sum_{v=k_n}^{n-1}\sum_{w=k_n}^{n-1} O\left(\frac{k_n^2w^{1/2}}{uv^{1/p}w^{1/(2p^2)}}\right)\label{eqlemsum}\\
&=O(k_n^2 (\ln n) n^{3/2}n^{1-\frac{1}{2p}}n^{1-\frac{1}{2p^2}})\label{eqgrandof}.
\end{align}
The term \eqref{eqlemsum} is obtained applying lemma \ref{somr} and relation \eqref{eqgrandof} through classical convergence properties of Riemann sums.
This error term is in $o(n^3)$ which concludes the lemma.
\end{proof}
\subsection{Proof of the proposition \ref{convl2}.}\label{proofProp2Steps}


\underline{Step 1:}

\begin{lem}\label{etapd}
The sequence 
$$
\left( \frac{1}{n^{3/2}} \left( \V_n-\sum_{k=1}^Dk\sum_{|N|\leq D}\sum_{x \in \mathbb{Z}}\sum_{A \in \mathcal{A}_n}\mup(A^{[k_n]})\mup(C^k_{A,N})N_n(x+N)N_n(x)\right)\right)_{n \in \mathbb{N}^*}
$$ converges in $L^2_{\mup}$ toward $0$.
\end{lem}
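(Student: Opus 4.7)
The plan is to establish $\|Z_n\|_{L^2_{\mup}}^2 = o(n^3)$, where $Z_n := \V_n - R_n$ and $R_n$ denotes the sum involving local times appearing in the statement. I will expand the square and apply Lemma~\ref{decorgr} to the resulting quadruple sums.

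The first step is to rewrite $R_n$ in a form structurally similar to $\V_n$. Using $\sum_{x\in\Z} N_n(x+N) N_n(x) = \sum_{0\leq i,j\leq n-1} 1_{\{S_i-S_j=N\}}$ and the decomposition $\sum_{i,j}=\sum_{i<j}+\sum_{i>j}+\sum_{i=j}$, the diagonal produces an $O(n)$ deterministic contribution (negligible after dividing by $n^{3/2}$), and the $i>j$ part is rewritten via the substitution $N\mapsto -N$ in the outer sum $\sum_{|N|\leq D}$ to get
$$
R_n = \sum_{k=1}^D k\sum_{|N|\leq D}\sum_{A\in\A_n} \mup(A^{[k_n]})\bigl[\mup(C^k_{A,N})+\mup(C^k_{A,-N})\bigr] \sum_{0\leq i<j\leq n-1} 1_{\{S_j-S_i=N\}} + O(n).
$$
Via the cocycle identity $1_{\{S_{j-i}=N\}}\circ\tp^i = 1_{\{S_j-S_i=N\}}$, each summand of $Z_n$ is now the product of $1_{\{S_j-S_i=N\}}\circ\tp^i$ with the ``fluctuation'' factor $2\cdot 1_{A^{[k_n]}}\circ\tp^i \cdot 1_{C^k_{A,N}}\circ\tp^j - \mup(A^{[k_n]})[\mup(C^k_{A,N})+\mup(C^k_{A,-N})]$.

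Second, I compute $E_{\mup}(Z_n^2)$ by expanding the square; this produces a sixfold sum over $(i_1,j_1,i_2,j_2)$ and $(A_1,k_1,N_1,A_2,k_2,N_2)$ of expectations of products of indicator functions. By inserting and subtracting $\mup(A^{[k_n]})$ and $\mup(C^k_{A,\pm N})$, each summand decomposes so that at least one factor is a centered indicator of the form $1_{A^{[k_n]}}-\mup(A^{[k_n]})$ or $1_{C^k_{A,N}}-\mup(C^k_{A,N})$. Since by definition $A^{[k_n]}$ is a union of elements of $\xi_{-k_n}^{k_n}$, and the same holds for $C^k_{A,N}$ (the partition being $\D$-compatible), the resulting expectations exactly fit the hypotheses of Lemma~\ref{decorgr}.

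Third, Lemma~\ref{decorgr} yields, for each fixed choice of the parameters, an $o(n^3)$ bound on the inner quadruple sum over indices, uniform in the $L^\infty$ norms (all bounded by $1$). The main technical obstacle is that a naive summation over $(A_1,A_2)\in\A_n^2$ incurs a factor $|\A_n|^2 = O(n^{1/5})$, which alone is not absorbed by the $o(n^3)$ bound. To overcome this, I will absorb the $A$-sums into the test functions of Lemma~\ref{decorgr} by regrouping $\sum_{A}\mup(C^k_{A,\pm N})\bigl(1_{A^{[k_n]}}-\mup(A^{[k_n]})\bigr)$ and $\sum_A \mup(A^{[k_n]})\bigl(1_{C^k_{A,N}}-\mup(C^k_{A,N})\bigr)$ into single linear combinations of $\xi_{-k_n}^{k_n}$-indicators, whose $L^\infty$-norms stay bounded thanks to the $\mup$-essential disjointness of $\A_n$ together with the measure bounds $\mup(A^{[k_n]}),\mup(C^k_{A,\pm N}) = O(n^{-1/10})$ from hypothesis (g). This reduces the application to a single invocation of Lemma~\ref{decorgr} per $(k_1,k_2,N_1,N_2)$, and the finite outer sum over $k$ and $|N|\leq D$ then preserves the $o(n^3)$ bound, concluding the proof.
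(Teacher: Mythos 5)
Your approach mirrors the paper's: rewrite $R_n$ through $\sum_x N_n(x+N)N_n(x)=\sum_{i,j}1_{S_i-S_j=N}$, linearize by inserting and subtracting expectations so that one factor is centered, and invoke Lemma~\ref{decorgr} on the expanded square. You also correctly flag a point the paper passes over silently: applying the lemma's $o(n^3)$ bound for each pair $(A,A')\in\A_n^2$ loses a factor $|\A_n|^2=O(n^{1/5})$. The problem is that the fix you propose — absorbing the $A$-sums into the test functions of Lemma~\ref{decorgr} — does not cover one essential piece of the decomposition.

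The absorption works when one of the two time-separated $A$-dependent factors is a constant: e.g.\ $\sum_A \mup(A^{[k_n]})\bigl(1_{C^k_{A,N}}-\mup(C^k_{A,N})\bigr)$ is a single centered linear combination of $\xi_{-k_n}^{k_n}$-indicators with $\|\cdot\|_\infty=O(1)$ (since $\sum_A\mup(A^{[k_n]})=O(1)$), and Lemma~\ref{decorgr} applies once. But any centering of $1_{A^{[k_n]}}\circ\tp^i\,1_{C^k_{A,N}}\circ\tp^j$ inevitably produces a cross term — in the paper's two-term split this is $E_1$, whose summand is $(1_{A^{[k_n]}}-\mup(A^{[k_n]}))\circ\tp^i\,1_{C^k_{A,N}}\circ\tp^j$; in a three-term split it is $(1_{A^{[k_n]}}-\mup(A^{[k_n]}))\circ\tp^i\,(1_{C^k_{A,N}}-\mup(C^k_{A,N}))\circ\tp^j$ — where the label $A$ occurs in \emph{both} time positions. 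The sum over $A$ of such an object is not of the factored form $g(\cdot)\,f(\tp^{j-i}\cdot)$ with $A$-independent $f,g$, so Lemma~\ref{decorgr} cannot be applied to the $A$-summed quantity, and your "single invocation per $(k_1,k_2,N_1,N_2)$" breaks down exactly here. For this piece one is forced back to summing over $(A,A')$ and paying $|\A_n|^2$; this does close, but only because the $o(n^3)$ of Lemma~\ref{decorgr} is in fact a genuine power saving: its dominant error \eqref{eqgrandof} is $O\bigl(k_n^2(\log n)\,n^{7/2-1/(2p)-1/(2p^2)}\bigr)$, which for $p$ close to $1$ is $O(n^{5/2+\varepsilon})$, and the near-diagonal contributions are $O(k_n^2 n^{5/2})$, leaving a margin of order $n^{1/2}$, comfortably above $n^{1/5}$. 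Your proposal omits this accounting, which is precisely the load-bearing step once the absorption idea is seen to fail for the cross term. A smaller point: centering $2ab-\alpha(\beta+\beta')$ leaves a deterministic residual $\mup(A^{[k_n]})\bigl(\mup(C^k_{A,N})-\mup(C^k_{A,-N})\bigr)$, so your claim that "each summand decomposes so that at least one factor is a centered indicator" is not quite literally true; this residual needs a separate (easy, but nonzero) estimate.
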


\begin{proof}


Noticing that
$$
\sum_{|N|\leq D} N_n(x+N)N_n(x)=n+2\sum_{1 \leq i <j \leq n}\sum_{|N| \leq D}1_{S_{j-i}=N}\circ \tp^{j-i},
$$
Inequalities of convexity then provide the following approximation.
\begin{align*}
&E_{\mup}\left(\left(\mathcal{V}_n-\sum_{k=1}^Dk\sum_{|N|\leq D}\sum_{x \in \mathbb{Z}}\sum_{A \in \mathcal{A}_n}\mup(A^{[k_n]})\mup(C^k_{A,N})N_n(x+N)N_n(x)\right)^2 \right)\leq\\
&2E_{\mup}\left(\Bigg(\sum_{k=1}^Dk\sum_{|N|\leq D}\sum_{A \in \mathcal{A}_n}2\sum_{0\leq i< j \leq n-1}\Big((1_{A^{[k_n]}}-\mup(A^{[k_n]}))\circ \tp^i1_{C^k_{A,N}} \circ \tp^{j} \right.\\
&\left. +\mup(A^{[k_n]})(1_{C^k_{A,N}}-\mup(C^k_{A,N}))\circ \tp^{j}\Big)1_{S_{j-i}=N}\circ \tp ^i\Bigg)^2\right)+o(n^2)\\
&\leq 4(E_1+E_2)+o(n^2),
\end{align*}

where

$$
E_1:=E_{\mup}\left(\left(\sum_{k=1}^Dk\sum_{|N|\leq D}\sum_{A \in \mathcal{A}_n}\sum_{0\leq i < j \leq n-1}\left((1_{A^{[k_n]}}-\mup(A^{[k_n]}))1_{C^k_{A,N}}\circ \tp^{j-i}1_{S_{j-i}=N}\right)\circ \tp^i\right)^2\right)
$$

and

$$
E_2:=E_{\mup}\left(\left(\sum_{k=1}^Dk\sum_{|N|\leq D}2\sum_{A \in \mathcal{A}_n}\sum_{0\leq i < j \leq n-1}\left(\mup(A^{[k_n]})(1_{C^k_{A,N}}-\mup(C^k_{A,N}))\circ \tp^{j-i}1_{S_{j-i}=N}\right)\circ \tp^i\right)^2\right).
$$

In both expressions, the terms within the sum may be written as
$$
fg\circ\tp^{j-i} 1_{S_{j-i}=N}\circ \tp^i.
$$
with $(f,g):=(1_{C^k_{A,N}}-\mup(C^k_{A,N}),1_{C^k_{A,N}})$ for $E_1$ and $(f,g):=(\mup(A^{[k_n]}),1_{C^k_{A,N}}-\mup(C^k_{A,N})$ for $E_2$.

So the expression to estimate for $N$ and $N'$ fixed are of the kind of lemma~\ref{decorgr} :

\begin{align*}
&\sum_{A \in \mathcal{A}_n}\sum_{A' \in \mathcal{A}_n}\sum_{0\leq i \leq j, i\leq k\leq l \leq n-1}E_{\mup}\left(fg\circ \tp^{j-i}1_{S_{j-i}=N}
\left(f'g'\circ \tp^{l-k}1_{S_{l-k}=N'}\right)\circ \tp^{k-i}\right).
\end{align*} 

Since one of the function $f,g,f'$ or $g'$ has zero mean, the conclusion of the lemma holds and the expression is in $o(n^3)$:

\begin{align*}
E_1=o(n^3) \text{ and } E_2=o(n^3).
\end{align*}

This shows lemma \ref{etapd}.

\end{proof}

\underline{Step 2:}

\begin{lem}\label{tploc}

For $l \in \mathbb{Z}$,

$$
E_{\mup} \left( \left|\sum_{x\in \mathbb Z}  N_n^2(x)-N_n(x)N_n(x+l) \right| \right)=o(n^{3/2}).
$$
\end{lem}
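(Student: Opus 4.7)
The plan is to rewrite the difference as a covariance-type sum over $x$ and apply the Cauchy--Schwarz inequality twice, reducing the estimate to two $\ell^2$-sums each of which is then controlled by the local limit theorem from hypothesis (e).

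First I would use the elementary identity
$$\sum_{x\in\mathbb Z}\bigl(N_n^2(x)-N_n(x)N_n(x+l)\bigr)=\sum_{x\in\mathbb Z}N_n(x)\bigl(N_n(x)-N_n(x+l)\bigr),$$
and apply the Cauchy--Schwarz inequality first to the sum over $x$, then to the expectation with respect to $\mup$, so as to obtain
$$E_{\mup}\Bigl|\sum_{x}N_n(x)\bigl(N_n(x)-N_n(x+l)\bigr)\Bigr|\leq\Bigl(E_{\mup}\sum_x N_n^2(x)\Bigr)^{1/2}\Bigl(E_{\mup}\sum_x\bigl(N_n(x)-N_n(x+l)\bigr)^2\Bigr)^{1/2}.$$

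Next I would evaluate both factors by expanding in pairs of time indices $(i,j)$ and using the $\tp$-invariance of $\mup$. Since
$\sum_xN_n^2(x)=\sum_{i,j=0}^{n-1}\mathbf 1_{S_i=S_j}$ and $\sum_x(N_n(x)-N_n(x+l))^2=2\sum_{i,j=0}^{n-1}(\mathbf 1_{S_i=S_j}-\mathbf 1_{S_j=S_i+l})$, the two expectations reduce to evaluating $\mup(S_k=0)$ and $\mup(S_k=0)-\mup(S_k=\pm l)$ for $1\leq k\leq n-1$. The local limit theorem from hypothesis (e), applied with $A=B=\overline M$, yields $\mup(S_k=N)=\frac{e^{-N^2/(2\Sigma k)}}{\sqrt{2\pi\Sigma k}}+O(k^{-1})$. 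The first factor is then $O(n^{3/2})$ by the standard bound $\sum_{k=1}^n n/\sqrt{k}=O(n^{3/2})$. For the second factor, I would split the sum over $k$ into the ranges $k\leq l^2$ and $k>l^2$: in the short-time range the trivial bound $\mup(S_k=\cdot)=O(k^{-1/2})$ gives a contribution $\sum_{k\leq l^2}n\cdot k^{-1/2}=O(n|l|)$, whereas in the long-time range the Taylor expansion $1-e^{-l^2/(2\Sigma k)}=O(l^2/k)$ produces a term $O(l^2 k^{-3/2})$ which, summed against $(n-k)$, gives $O(nl^2/|l|)=O(n|l|)$. Hence $E_{\mup}\sum_x(N_n(x)-N_n(x+l))^2=O(n|l|)$.

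Plugging these bounds back gives
$$E_{\mup}\Bigl|\sum_x\bigl(N_n^2(x)-N_n(x)N_n(x+l)\bigr)\Bigr|\leq(O(n^{3/2}))^{1/2}(O(n|l|))^{1/2}=O(n^{5/4}|l|^{1/2})=o(n^{3/2}),$$
since $l$ is fixed. The main technical point is the $O(n|l|)$ bound on $E_{\mup}\sum_x(N_n(x)-N_n(x+l))^2$: each of the two summands composing it is of size $n^{3/2}$, so the linear $|l|$-dependence can only come from the Gaussian cancellation in the local limit theorem, carefully split between the short-time regime (where both probabilities are $O(k^{-1/2})$ and the saving comes from the narrow window $k\lesssim l^2$) and the long-time regime (where the Taylor expansion of the Gaussian kernel produces the saving $l^2/k$). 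The pointwise variance estimate from hypothesis (f) is too crude to be summed over $x\in\mathbb Z$ on its own and would only produce $O(n^{3/2}|l|)$, which is why the finer local-limit-theorem input of (e) is essential here.
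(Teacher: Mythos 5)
Your overall scheme — the identity $\sum_x (N_n^2(x)-N_n(x)N_n(x+l)) = \sum_x N_n(x)(N_n(x)-N_n(x+l))$, Cauchy--Schwarz, and then $E_{\mup}\sum_x N_n^2(x)=O(n^{3/2})$ via the local limit theorem — matches the paper exactly. The two arguments diverge on the second factor $E_{\mup}\sum_x(N_n(x)-N_n(x+l))^2$. The paper does \emph{not} use the local limit theorem here: it truncates the sum to $|x|\le n^{a+1/2}$ (for $a$ just above $1/4$), applies the pointwise $L^2$-continuity bound $E_{\mup}|N_n(x)-N_n(y)|^2=O(n^{1/2}|x-y|)$ from hypothesis (f) on the truncated window, and controls the tail with a Markov bound on $\sup_{k\le n}|S_k|$ via Billingsley's maximal inequality (plus the $L^2$ bound of hypothesis (d)), giving $O(n^{2-2a}\log n)+O(n^{a+1}|l|)=o(n^{3/2})$. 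Your alternative is to expand $\sum_x(N_n(x)-N_n(x+l))^2=2\sum_{i,j}(\mathbf 1_{S_i=S_j}-\mathbf 1_{S_j=S_i+l})$ and feed hypothesis (e) with $A=B=\Mp$ into the two probabilities $\mup(S_k=0)$, $\mup(S_k=\pm l)$, extracting the Gaussian cancellation. This is a legitimately different route; it is arguably more self-contained (it bypasses the maximal inequality and Billingsley's theorem) and exposes the exact mechanism by which the $l$-dependence comes out linear.

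Two corrections. First, your claimed bound $O(n|l|)$ on the second factor is not quite right: hypothesis (e) gives $\mup(S_k=N)=\frac{e^{-N^2/(2\Sigma(k-2))}}{\sqrt{2\pi\Sigma(k-2)}}+O(1/k)$, and the $O(1/k)$ errors do \emph{not} cancel under the subtraction $\mup(S_k=0)-\mup(S_k=l)$. Summed against the multiplicity $(n-k)$ they contribute $O(n\log n)$, so the correct bound is $O(n|l|+n\log n)$. For fixed $l$ this is still $o(n^{3/2})$ and the lemma follows, but the clean $O(n|l|)$ you state is not available from (e) alone. Second, your closing claim that hypothesis (f) ``is too crude to be summed over $x\in\mathbb Z$'' and ``would only produce $O(n^{3/2}|l|)$'' misrepresents the situation: the paper precisely overcomes the infinite range of $x$ by the truncation described above, and obtains $o(n^{3/2})$ from (f) without any local limit theorem input in this step. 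Hypothesis (e) is not essential here — it is an alternative.
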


\begin{proof}

Cauchy-Schwarz inequality and the upper bound $E_{\mup}(|N_n(x)-~N_n(y)|^2)=O( n^{1/2}|x-y|)$ from hypothesis (f) lead to the following inequality :

\begin{align}\label{eqcauchyschwarz}
E_{\mup} & \left(  \left| \sum_{x\in \mathbb Z} N_n^2(x)-N_n(x)N_n(x+l) \right| \right)\leq \nonumber\\
&E_{\mup}\left( \sum_{x\in \mathbb Z} |N_n(x)|^2 \right)^{1/2}E_{\mup}\left( \sum_{x\in \mathbb Z} |N_n(x)-N_n(x+l)|^2 \right)^{1/2}.
\end{align}

First we study $E_{\mup}\left( \sum_{x\in \mathbb Z} \left|N_n(x)-N_n(x+l) \right|^2 \right)$ by fixing $a \in [ \frac 1 4 ; \frac 1 2[$ and applying hypothesis (f),
\begin{align*}
E_{\mup}\left( \sum_{x\in \mathbb Z} |N_n(x)-N_n(x+l)|^2 \right)&\leq n^2\mup \left(\sup_{0\leq k \leq n}|S_k|\geq n^{a+1/2}\right)\\
&+E_{\mup}\left(\sum_{-n^{a+1/2}\leq x \leq n^{a+1/2}} |N_n(x)-N_n(x+l)|^2\right)\\
& \leq n^2 \frac{1}{n^{2a+1}} E_{\mup} \left( \sup_{0\leq k \leq n}(S_k^2) \right)+ O \left(n^{a+1/2}|l|n^{1/2} \right).
\end{align*}
We apply the following theorem by Billingsley \cite[p. 102]{billingsley2} to the sequence of observable $(X_i)_{i\in \N}$ given by $X_i=\phi\circ \overline{T}^i$ which follows the relation from hypothesis (c), $E_{\mup}(|S_n|^2 )=O(n)$.
\begin{thm}[\cite{billingsley2}]
Given $(X_i)_{i \in \N}$ a sequence of centered random variables such that there are constants $\alpha\ge 1$ and $v \geq 1$ and there is a sequence $(u_i)_{i \in \N}$ of non negative numbers satisfying for all $a,n \geq 1$
$$
E \left(\left|\sum_{i=a+1}^{a+n}X_i\right|^\alpha \right)\leq \left(\sum_{i=a+1}^{a+n}u_i \right)^v,
$$
then 
$$
E_{\mup} \left( \sup_{0\leq k \leq n}\left|\sum_{i=a+1}^{a+n}X_i\right|^\alpha \right) \leq (\log_2(4n))^\alpha \left(\sum_{i=a+1}^{a+n}u_i \right)^v.
$$
\end{thm}
This theorem gives a constant $K>0$ such that the following inequality holds for all $n\geq 0$
$$
E_{\mup} \left(\sup_{0 \leq k \leq n}(S_k^2) \right) \leq Kn (\log(n)).
$$
It follows from the previous inequality that

\begin{align}\label{eqdifcare}
E_{\mup}\left( \sum_{x\in \mathbb Z} |N_n(x)-N_n(x+l)|^2 \right) & = O(n^2 \frac{1}{n^{2a+1}}n(\ln(n))+ O(n^{a+1/2}|l|n^{1/2})\nonumber\\
&=o(n^{3/2}).
\end{align}
Besides, it follows from the local limit theorem in hypothesis (e) that 

\begin{align}\label{eqcare}
E_{\mup} \left(\sum_{x \in \mathbb{Z}} N_n(x)^2 \right)&=E_{\mup}\left(\sum_{x \in \Z} \left( \sum_{i=0}^{n-1} \sum_{j=0}^{n-1} 1_{S_i=x}1_{S_j=x} \right)^2\right)\nonumber\\
&=\sum_{x \in \Z}2\sum_{i<j} E_{\mup}(1_{S_{j-i}=0}\circ T^j1_{S_i=x}) +O(n)\nonumber\\
&=2\sum_{i<j} E_{\mup}(1_{S_{j-i}=0}) +O(n)\nonumber\\
&\leq 2\sum_{i<j} Cn^{-1/2} +O(n)\nonumber\\
&=O(n^{3/2}).
\end{align}
Applying the upper bound from \eqref{eqdifcare} and \eqref{eqcare} to relation \eqref{eqcauchyschwarz},
We reach the conclusion
$$
E_{\mup}\left( \left|\sum_{x\in \mathbb Z} N_n^2(x)-N_n(x)N_n(x+l) \right| \right)=o(n^{3/2}).
$$


\end{proof}

\begin{proof}[Proof of proposition \ref{convl2}]

Proposition \ref{esp} alongside the above lemmas \ref{etapd} and \ref{tploc} give the conclusion
$$
E_{\mup} \left(\frac{1}{n^{3/2}} \left|\V_n-\sum_{|N|\leq D} \sum_{A \in \mathcal{A}_n} \mu(A) \mu(C^k_{A,N})\sum_{x\in \mathbb Z}N_n^2(x) \right| \right)\rightarrow 0.
$$
\end{proof} 

 The convergence of $\left( \frac{1}{n^{3/2}}\V_{\lfloor tn \rfloor}\right)_{n \in \N}$ in the f.d.d. (finite dimensional distributions) sense is then strongly linked to the convergence of the f.d.d of $\left(\frac{1}{n^{3/2}}\sum_{x\in \mathbb Z} N_{\lfloor tn \rfloor}^2(x) \right)_{n \in \N}$.
In order to prove the convergence of the latter, we just need to check that hypotheses (e) and (f) ensure the assumptions of the following proposition (proposition 2.1  of \cite{plantard-dombry})

\begin{prop}\label{propplantar}
Let $( S_n)_{n \in \N}$ be a $\Z$-valued random sequence and denote $N_n(a):=\sum_{i=0}^{n-1}1_{S_i=a}$ its local time. Suppose they  satisfy the following:
\begin{itemize}
\item[(1)]The sequence of processes $\left( \frac 1 {n^{1/2}} S_{\lfloor nt \rfloor}\right)_{t \geq 0}$ converges in distribution according to metric $J_1$ toward some Brownian motion $(B_t)_{t \geq 0}$ with local time $(L_t)_{t \geq 0}$,
\item[(2)] $\sup_{a \in \Z}\|n^{-1/2}N_n(n^{1/2}a)\|_{L^2} < \infty$,
\item[(3)] $\limsup_{b \rightarrow 0} \lim_{n \rightarrow \infty}\|n^{-1/2}N_n(n^{1/2}a)-n^{-1/2}N_n(n^{1/2}(a+b))\|_{L^2}=0$,
\end{itemize}

Then the finite dimensional distributions (f.d.d) of the sequence of processes $\left(N_{\lfloor nt \rfloor}(.)\right)_{t \geq 0}$  converge to those of $\left(L_t(.)\right)_{t \geq 0}$ in the space $(L^p(\mathbb{R}), \|.\|_{L^p})$, where $\left(L_t(.)\right)_{t \geq 0}$ stands for a local time associated to the Brownian motion $(B_t)_{t \geq 0}$.
\end{prop}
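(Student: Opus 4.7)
The plan is to combine the $J_1$-invariance principle in (1) with the moment/tightness controls (2)–(3) to lift weak convergence of occupation measures to functional convergence of the local-time density in $L^p(\mathbb{R})$. I will work with the rescaled process $\tilde L_n(t,a):=n^{-1/2}N_{\lfloor nt\rfloor}(\lfloor n^{1/2}a\rfloor)$, which is the natural diffusive scaling suggested by the hypotheses, and identify its limit with the Brownian local time $L_t(a)$.

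First I would test against a continuous compactly supported $f\in C_c(\mathbb R)$. A direct computation (a discrete change of variable, replacing the Lebesgue integral by the Riemann sum over lattice points $k=\lfloor n^{1/2}a\rfloor$) yields
\begin{equation*}
\int_{\mathbb R} f(a)\,\tilde L_n(t,a)\,da \;=\; \frac{1}{n}\sum_{i=0}^{\lfloor nt\rfloor-1} f\!\left(\frac{S_i}{\sqrt n}\right)+o(1),
\end{equation*}
where the error is uniform in the realisation because $f$ is uniformly continuous. The right-hand side is a continuous functional of the càdlàg path $(n^{-1/2}S_{\lfloor n\cdot\rfloor})$ evaluated in the Skorokhod $J_1$ topology, so by (1) and the continuous mapping theorem it converges in distribution to $\int_0^t f(B_s)\,ds$. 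By the Brownian occupation-time formula this equals $\int_{\mathbb R} f(a)\,L_t(a)\,da$. Iterating the argument with a finite collection of times $0\le t_1<\dots<t_m$ and a finite collection of test functions produces the convergence of all finite-dimensional distributions of $\bigl(\langle f,\tilde L_n(t,\cdot)\rangle\bigr)$ to those of $\bigl(\langle f,L_t(\cdot)\rangle\bigr)$, i.e.\ weak convergence as distributions/measures.

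Second, the weak convergence above has to be upgraded to convergence in the Banach space $L^p(\mathbb R)$. Conditions (2) and (3) are tailored exactly to apply a Kolmogorov–Fréchet–Riesz compactness criterion: (2) provides a uniform $L^2$-bound on $\tilde L_n(t,\cdot)$ pointwise in the spatial variable and hence, combined with a tail estimate derived from (1) and Doob's maximal inequality applied to $S_n$, yields uniform integrability of $\int |\tilde L_n(t,a)|^p\,da$ for suitable $p$; (3) is precisely the $L^2$-equicontinuity in the spatial variable. Together these establish relative compactness in $L^p(\mathbb R)$ of the family $\{\tilde L_n(t,\cdot)\}_n$ for each fixed $t$, via the criterion that in $L^p$ a family is relatively compact iff it is bounded, uniformly $L^p$-integrable at infinity, and uniformly continuous in the mean. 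Tightness in $L^p(\mathbb R)$ of the $\mathbb R$-valued random variable $\tilde L_n(t,\cdot)$ then follows from a Markov-type argument on the compactness modulus.

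Finally, any weak subsequential limit must coincide with $L_t(\cdot)$: the weak convergence under pairing with $f\in C_c(\mathbb R)$ identifies the limit in the distributional sense, and since $L_t\in L^p(\mathbb R)$ almost surely (continuity and compact support of $a\mapsto L_t(a)$), the identification in $L^p$ follows by uniqueness of representation. Iterating over several time parameters $t_1,\dots,t_m$ (and using the joint $J_1$ convergence in (1) on each coordinate) promotes this to convergence of all finite-dimensional distributions in $L^p(\mathbb R)^m$. The main obstacle, in my view, is the tightness step: condition (3) is only stated along lattice points $a\in\mathbb Z$, so one must first interpolate it to a genuine $L^2$-modulus of continuity on $\mathbb R$; this requires combining (2) (to control individual ``fibres'') with the scaling $a\mapsto \sqrt n a$ in (3) to deduce equicontinuity at arbitrary small continuous scales. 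Once that interpolation is in place, the Fréchet–Kolmogorov criterion closes the argument.
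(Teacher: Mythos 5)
Note first that the paper does not actually prove this Proposition: it is imported verbatim as Proposition 2.1 of the cited reference \cite{plantard-dombry} and used as a black box, both here in the proof of Corollary~\ref{corfdd} and again in the appendix on the enhanced invariance principle (whose proof explicitly re-invokes it for the tightness step). There is therefore no in-paper proof to compare against; I can only judge your attempt on its own. Your overall plan is the standard one for such results and is on the right track: (i) convert hypothesis (1) into distributional convergence of the rescaled occupation measures by pairing with test functions, via the occupation-time formula and $J_1$-continuity of $Y\mapsto\int_0^t f(Y_s)\,ds$; (ii) establish tightness in $L^p(\mathbb{R})$ via a Fréchet--Kolmogorov criterion fed by (2)--(3); (iii) identify the subsequential limit with $L_t$. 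Step (i) is correct, and is essentially the content of the auxiliary Lemma~\ref{lemconvweaktop} that the paper \emph{does} prove, except the paper tests against interval indicators rather than $C_c$ functions; the two are interchangeable.

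The tightness step, however, has concrete gaps. The obstacle you flag (``interpolating'' (3) from lattice points) is not the real difficulty: $a\mapsto n^{-1/2}N_{\lfloor nt\rfloor}(\lfloor n^{1/2}a\rfloor)$ is piecewise constant on a mesh of size $n^{-1/2}$, so a continuous shift by $h$ changes its argument by at most $\lceil n^{1/2}|h|\rceil$ lattice steps, and (3) already governs exactly the translation modulus once it is read uniformly in $a$. The two genuine issues are these. (a) Conditions (2)--(3) are second-moment ($L^2(\Omega)$) bounds, which feed naturally into $L^2(\mathbb{R})$-tightness via Fubini, $E\int|\cdot|^2\,da=\int E|\cdot|^2\,da$; your claim for a general $L^p$ needs either to be restricted to $p\le 2$ with interpolation against the deterministic mass $\int n^{-1/2}N_{\lfloor nt\rfloor}(\lfloor n^{1/2}a\rfloor)\,da=\lfloor nt\rfloor/n$, or it needs $p$-th moment analogues of (2)--(3) that are not hypothesised. (b) For the uniform-smallness-at-infinity half of Fréchet--Kolmogorov you invoke ``Doob's maximal inequality applied to $S_n$''; but in this proposition $S_n$ is only a $\mathbb{Z}$-valued sequence satisfying an invariance principle, not a martingale, so Doob is not available, and weak $J_1$-convergence in (1) by itself gives no quantitative control on $\sup_{k\le n}|S_k|$. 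That step requires a substitute (such as the Billingsley moment maximal inequality the paper itself uses in Lemma~\ref{tploc}, which in turn demands a second-moment hypothesis on partial sums). Until these two points are patched, the argument does not close.
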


 \begin{cor}\label{corfdd}
The family of processes $\left( \frac{1}{n^{3/2}}\V_{\lfloor nt \rfloor}\right)_{n \in \N}$ converges in the f.d.d sense toward $\left(\Gamma^{-2}e_I\int_{\mathbb{R}} L_t^2(x)dx\right)_{t \geq 0}$.
\end{cor}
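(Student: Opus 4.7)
The plan is to first replace $\V_{\lfloor nt\rfloor}/n^{3/2}$ by a constant multiple of the quadratic functional $n^{-3/2}\sum_{x\in\Z}N_{\lfloor nt\rfloor}^2(x)$ of the walk's local time at every fixed $t$, and then identify the limit of this quadratic functional through Proposition~\ref{propplantar}. For the first step, Lemma~\ref{etapd} compares $\V_{\lfloor nt\rfloor}$ in $L^2_{\mup}$ with $\sum_{k,N,A,x} k\,\mup(A^{[k_n]})\mup(C^k_{A,N})N_{\lfloor nt\rfloor}(x)N_{\lfloor nt\rfloor}(x+N)$; Lemma~\ref{tploc} allows each cross-product $N_n(x)N_n(x+N)$ to be replaced in $L^1$ by $N_n^2(x)$; and by hypothesis (b) (which guarantees $V_k^{(x_A)}\subset\bigcup_{|N|\leq D}\D^N\Mp$) together with Proposition~\ref{esp}(2), the prefactor $\sum_{A,k,N} k\,\mup(A^{[k_n]})\mup(C^k_{A,N})$ converges to $\Gamma^{-2}e_I$. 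Combining these three ingredients gives, for each fixed $t\ge 0$,
\[
\left\|\frac{\V_{\lfloor nt\rfloor}}{n^{3/2}}-\frac{\Gamma^{-2}e_I}{n^{3/2}}\sum_{x\in\Z}N_{\lfloor nt\rfloor}^2(x)\right\|_{L^1_{\mup}}\xrightarrow[n\to\infty]{}0.
\]

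Next I would verify the three assumptions of Proposition~\ref{propplantar} with $p=2$. Assumption (1) is precisely hypothesis (f); assumption (3) is immediate from~\eqref{tps_local}, which yields $\|n^{-1/2}N_n(n^{1/2}a)-n^{-1/2}N_n(n^{1/2}(a+b))\|_{L^2}^2=O(|b|)$. For (2), starting from
\[
E_{\mup}[N_n(a)^2]=\sum_{0\leq i\leq j\leq n-1}\mup(S_i=a,S_j=a),
\]
I would apply the local limit theorem with decorrelation (hypothesis (e) with $A=\Mp$ and $B=\{S_{j-i}=0\}$, which is a union of elements of $\xi_{-1}^{j-i-1}$) to bound each off-diagonal summand by $C/\sqrt{i(j-i)}$ uniformly in $a$; the resulting double sum is $O(n)$ uniformly in $a$, whence $\sup_a\|n^{-1/2}N_n(n^{1/2}a)\|_{L^2}<\infty$.

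Proposition~\ref{propplantar} then delivers the finite-dimensional convergence of $(n^{-1/2}N_{\lfloor nt\rfloor}(\lfloor n^{1/2}\cdot\rfloor))_{t\ge 0}$ towards $(L_t(\cdot))_{t\ge 0}$ in $(L^2(\mathbb{R}),\|\cdot\|_{L^2})$. Since the map $f\mapsto\|f\|_{L^2}^2$ is continuous on $L^2(\mathbb{R})$, the continuous mapping theorem yields
\[
\frac{1}{n^{3/2}}\sum_{x\in\Z}N_{\lfloor nt\rfloor}^2(x)=\int_{\mathbb{R}}\left(n^{-1/2}N_{\lfloor nt\rfloor}(\lfloor n^{1/2}y\rfloor)\right)^2dy\xrightarrow[n\to\infty]{\text{f.d.d.}}\int_{\mathbb{R}}L_t^2(y)\,dy,
\]
and combining this with the $L^1$-approximation of the first paragraph via Slutsky's theorem finishes the proof of the corollary. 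The main delicate point I anticipate is the uniformity in $a$ of the $L^2$-bound on $N_n(a)$ required by assumption (2) of Proposition~\ref{propplantar}: a soft central-limit argument does not suffice, and the quantitative local limit theorem supplied by hypothesis (e) is essential.
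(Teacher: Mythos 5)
Your proposal matches the paper's proof essentially step for step: the $L^1$-reduction of $\V_{\lfloor nt\rfloor}$ to $\Gamma^{-2}e_I\sum_x N_{\lfloor nt\rfloor}^2(x)$ is exactly Proposition~\ref{convl2} (whose proof is the combination of Lemma~\ref{etapd}, Lemma~\ref{tploc} and Proposition~\ref{esp}(2) that you spell out), the verification of assumptions (1)--(3) of Proposition~\ref{propplantar}---including the use of hypothesis (e) plus a series/integral comparison to get $\sup_a\|n^{-1/2}N_n(n^{1/2}a)\|_{L^2}<\infty$---coincides with the paper's argument, and the conclusion via continuity of $f\mapsto\|f\|_{L^2}^2$ and Slutsky is identical. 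Your closing caveat correctly singles out the one place where a quantitative local limit theorem is genuinely needed.
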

\begin{proof}

Assumption (1) of Proposition \ref{propplantar} derives from hypothesis (f), which also ensures that assumption (3) holds :

\begin{align*}
&\limsup_{b \rightarrow 0} \lim_{n \rightarrow \infty}\|n^{-1/2}N_n(\lfloor n^{1/2}a \rfloor)-n^{-1/2}N_n(\lfloor n^{1/2}(a+b)\rfloor)\|_{L^2}^2=\\
&\limsup_{b \rightarrow 0} \lim_{n \rightarrow \infty} O(|n^{-1/2}\lfloor n^{1/2}a \rfloor-n^{-1/2}\lfloor n^{1/2}(a+b)\rfloor|)=o(1).
\end{align*}

Assumption (2) of Proposition \ref{propplantar} is then satisfied using hypothesis (e) and comparisons series/integrals,

\begin{align*}
E_{\mup}(n^{-1}|N_n(a)|^2)&=\frac 2 n \sum_{0 \leq i \leq j \leq n-1} E_{\mup}(1_{S_{j-i}=0}\circ \tp^i 1_{S_i=\lfloor n^{1/2}a \rfloor})
&=O\left(\frac 2 n \sum_{i=k_n+1}^{n-1}\sum_{u=0}^{n-1} \frac {\mup(S_u=0)} {(i-k_n)^{1/2}}\right)\\
&=O\left(\frac 2 n \sum_{i=0}^{n-1}\sum_{u=0}^{n-1}\frac 1 {u^{1/2}}\frac 1 {i^{1/2}}\right)=O(1).
\end{align*}
Where $O(1)$ is taken uniformly in $a$ and $n$. 

Thus the sequence of processes $\left(\left(N_{\lfloor n t \rfloor}(x) \right)_{t \in \mathbb{R}_+} \right)_{n \in \N}$ converges in the f.d.d sense towards the process $( L_t(.))_{t \in \mathbb{R}_+}$ where $L_t(.)$ is seen as an element of $(L^2)$.
Since the function\\
$L(.) \rightarrow \int_{\mathbb{R}} L^2(x)dx$ is continuous on $L^2(\mathbb{R})$, we get the following convergence in the f.d.d sense
$$
\left(\sum_{x \in \mathbb Z}N_{\lfloor n t \rfloor}^2(x) \right)_{t \in \mathbb{R}_+} \overset {\LG^s_\mu}{\rightarrow} \left(\int_{\mathbb{R}} L_t^2(x)dx \right)_{t \in \mathbb{R}_+}.
$$

To conclude, notice that equation \eqref{ei} page \pageref{ei} gives the almost sure convergence of 
$$
\sum_{k=1}^Dk\sum_{|N|\leq D} \sum_{A \in \mathcal{A}_n} \mup(A^{[k_n]}) \mup(C^k_{A,N})=\Gamma^{-2}\sum_{k=1}^Dk\sum_{|N|\leq D} \sum_{A \in \mathcal{A}_n} \mu(A^{[k_n]}) \mu(C^k_{A,N})
$$
 towards $\Gamma^{-2}e_I$, and thus Slutsky lemma gives the conclusion of the convergence in the f.d.d sense of $\left(\frac{1}{n^{3/2}}\nu_{\lfloor nt \rfloor}\right)_{t \geq 0}$ toward $\left(\Gamma^{-2}e_I\int_{\mathbb{R}} L_t^2(x)dx\right)_{t \geq 0}$.

\end{proof}

\subsection {Proof of Theorems \ref{grosthmdiscret} and \ref{grosthmcon} }
We now focus on proving the convergence in law of the self intersections processes in both discrete and continuous time.
In what follows, $(M_{\tau},\f^{\tau},\mu_{\tau})$ is a suspension flow over $(M,T,\mu)$ with roof function $\tau$ which is isomorphic to $(\M_0,\f^0,\mathcal L_0)$.

Denotes in this subsection, $\mu_0(.):=\mup(.\cap \Mp)$ a probabilistic measure on $M$.

\begin{rem}\label{cv_pareil}
Let $f$ be a bounded continuous function on $(D[0,\infty), J_1)$ the Skorohod space with metric $J_1$ (see appendix \ref{rapmetrique}), then 

$$
E_{\mu_0}(f(\frac{1}{n^{3/2}} \nu_n))=\int_{\Mp}f(\frac{1}{n^{3/2}} \nu_n(x)d\mup(x)=E_{\mup}(f(\frac{1}{n^{3/2}}\nu_n)).
$$ 
\end{rem}

Denote in what follows $\nu_n^0(t):=(\lfloor nt \rfloor+1-nt)\nu_{\lfloor nt \rfloor}+(nt-\lfloor nt \rfloor)\nu_{\lfloor nt \rfloor+1}$
the continuous process from $\nu_n$. Both processes are close to each other. Indeed
$$
\frac{1}{n^{3/2}}\|\nu_n^0(.)-\nu_{\lfloor n. \rfloor}\|_{\infty, [0,S]} \leq \frac{1}{n^{3/2}}(nt-\lfloor nt \rfloor) (2\lfloor nt \rfloor +1)D,
$$

and thus $\frac{1}{n^{3/2}}\|\nu_n^0(t)-\nu_{\lfloor nt \rfloor}\|_{\infty, [0,S]} \rightarrow 0$, $\mu_0$ almost surely.
Which implies that the convergence for the $J_1$-metric (resp. for the f.d.d.) of $\left(\left(\frac{1}{n^{3/2}}\nu_n^0(t) \right)_{t \in [0,S]}\right)_{n \in \N}$ is equivalent to the convergence of $\left(\left(\frac{1}{n^{3/2}}\nu_{\lfloor nt \rfloor} \right)_{t \in [0,S]}\right)_{n \in \N}$ to the same limit according to measure $\mu_0$.\\
Studying the continuous process enables the use of the following lemma.

\begin{lem}
Let $((\nu_n^0(t))_{t \in [0,S]})_{n \in \N}$ be a sequence of continuous non decreasing processes on $[0,T]$ converging 
in the f.d.d sense toward some process $X$ continuous on $[0,T]$. Then $((\nu_n^0(t))_{t \in [0,T]})_{n \in \N}$ converges in distribution on $C^0([0,T], \mathbb{R})$.
\end{lem}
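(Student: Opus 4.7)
The plan is to show tightness in $C^0([0,T],\mathbb R)$ and then combine it with the assumed f.d.d.\ convergence to upgrade the latter to weak convergence on $C^0([0,T],\mathbb R)$. Recall the standard tightness criterion in $C^0([0,T],\mathbb R)$: the family $(\nu_n^0)_n$ is tight iff $(\nu_n^0(0))_n$ is tight and, for every $\epsilon,\eta>0$, there exists $\delta>0$ such that
$$\limsup_{n\to\infty}\mu_0\Bigl(\omega_{\nu_n^0}(\delta)>\epsilon\Bigr)<\eta,$$
where $\omega_f(\delta):=\sup_{|s-t|\le\delta}|f(s)-f(t)|$. The first point is free since $\nu_n^0(0)=0$.

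The heart of the argument is that monotonicity lets one control the modulus of continuity by finitely many increments. Fix $\epsilon,\eta>0$ and let $0=t_0<t_1<\cdots<t_K=T$ be a partition with mesh $<\delta$. Because $\nu_n^0$ is non-decreasing and continuous, whenever $|s-t|\le\delta$ one has $s,t\in[t_{i-1},t_{i+1}]$ for some $i$, and therefore
$$\omega_{\nu_n^0}(\delta)\;\le\;\max_{0\le i\le K-1}\bigl(\nu_n^0(t_{i+1})-\nu_n^0(t_{i-1})\bigr).$$
By the assumed convergence in the f.d.d.\ sense, the right-hand side converges in distribution (under $\mu_0$) to $\max_i\bigl(X(t_{i+1})-X(t_{i-1})\bigr)$ as $n\to\infty$. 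Since $X$ is a.s.\ continuous on the compact interval $[0,T]$, it is a.s.\ uniformly continuous, and one can choose the mesh $\delta$ small enough that this maximum is $<\epsilon$ with probability $>1-\eta$. The Portmanteau theorem applied to the open set $\{\cdot>\epsilon\}$ then yields
$$\limsup_{n\to\infty}\mu_0\Bigl(\omega_{\nu_n^0}(\delta)>\epsilon\Bigr)\;\le\;\mathbb P\Bigl(\max_i(X(t_{i+1})-X(t_{i-1}))\ge\epsilon\Bigr)\;<\;\eta,$$
giving the required tightness.

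Tightness on $C^0([0,T],\mathbb R)$ together with convergence of the finite-dimensional distributions implies convergence in distribution on $C^0([0,T],\mathbb R)$ by the standard argument: any weak subsequential limit of $(\nu_n^0)_n$ must have the same f.d.d.\ as $X$ and hence the same law (since laws on $C^0$ are determined by their f.d.d.); combined with tightness (relative compactness by Prokhorov), this forces the whole sequence to converge to $X$ in distribution on $C^0([0,T],\mathbb R)$.

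The only delicate point is the passage from the deterministic inequality $\omega_{\nu_n^0}(\delta)\le\max_i(\nu_n^0(t_{i+1})-\nu_n^0(t_{i-1}))$ to the probabilistic bound, where one must justify using the Portmanteau theorem for an open set (this is why we bound by the \emph{closed} event $\{\max\ge\epsilon\}$ rather than $\{\max>\epsilon\}$ on the limit side), but this is routine.
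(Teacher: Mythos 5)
Your argument is correct, but it takes a different route from the paper's. The paper proves the lemma abstractly: it invokes the second Dini theorem (Polya) — pointwise convergence of non-decreasing functions to a continuous limit on a compact interval is automatically uniform — to conclude that any set of non-decreasing continuous functions that is compact for the pointwise topology is also compact for the $\|\cdot\|_\infty$-topology; tightness for the pointwise topology (coming from the f.d.d.\ convergence) is then upgraded to tightness in $(C^0([0,T]),\|\cdot\|_\infty)$ by intersecting the compact sets with the cone $\mathcal S$ of non-decreasing maps. You instead verify the classical Arzel\`a--Ascoli tightness criterion in $C^0([0,T])$ directly, bounding the modulus of continuity $\omega_{\nu_n^0}(\delta)$ by a maximum of finitely many increments via monotonicity and then controlling the distributional $\limsup$ with Portmanteau and the a.s.\ uniform continuity of $X$. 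Both exploit the same underlying fact (monotonicity converts pointwise control into uniform control); yours is more hands-on and avoids the point--set topology discussion, while the paper's factors the argument through a reusable compactness lemma (Lemma~\ref{lemdini}) that it also needs in Appendix~\ref{secinvprinc}. Both methods then finish identically: tightness plus f.d.d.\ convergence plus the fact that laws on $C^0$ are determined by f.d.d.\ forces convergence in distribution.

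One small bookkeeping slip: you fix a partition of mesh $<\delta$ and then bound $\omega_{\nu_n^0}(\delta)$. If the mesh is strictly smaller than $\delta$, two points $s,t$ with $|s-t|\le\delta$ may span three partition intervals, so the inequality $\omega_{\nu_n^0}(\delta)\le\max_i\bigl(\nu_n^0(t_{i+1})-\nu_n^0(t_{i-1})\bigr)$ need not hold. Either take the mesh to be $\ge\delta$ (equivalently, bound $\omega_{\nu_n^0}$ at the minimal spacing of the partition), or widen the increment window to $t_{i+2}-t_{i-1}$. This is cosmetic and does not affect the validity of the argument.
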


\begin{proof}
 We can use the known result that any sequence $(f_n)_{n\in \N}$ of non decreasing functions converging point-wise to a function $f$ on a compact interval $[0,T]$ is uniformly converging to that same function\footnote{This result from Polya is known as "second théorème de Dini" in French literature} (see lemma \ref{lemdini} from appendix \ref{secinvprinc} for a proof in more general settings ). Thus any set containing only non-decreasing functions in $C^0([0,S])$ which is compact for the point-wise topology is a compact set for the $\|.\|_\infty$ topology. Since the sequence $(\nu_n^0(.))_{n\in \N}$ converge in the f.d.d sense, we get some tightness for the point-wise topology (i.e for any $\epsilon>0$, there is a compact set $K_\epsilon \subset C^0([0,T])$ for the point-wise topology on $C^0([0,T])$ such that $\mu\left(\nu_n^0\in K_\epsilon\right)\geq 1-\epsilon$) and from the point above and the fact that $\nu_n(.)$ are non decreasing processes, if we denote by $\mathcal{S}$ the set of non-decreasing continuous maps $K_\epsilon\cap \mathcal S$ is compact in $(C^0([0,S]),\|.\|_\infty)$ and 
\begin{align*}
\mu\left(\nu_n^0\in K_\epsilon\cap \mathcal{S}\right)=\mu\left(\nu_n^0\in K_\epsilon\right)\geq 1-\epsilon.
\end{align*} 
Thus we get tightness and then convergence in law for the uniform topology $(C^0([0,T]),\|.\|_\infty)$.
\end{proof}
\begin{proof}[Proof of Theorem~\ref{grosthmdiscret}]
It follows from Corollary \ref{corfdd} along with the previous lemma that  $\left(\frac 1 {n^{3/2}} \nu_n^0(t) \right)_{t \in [0,T]}$ and $\left(\frac 1 {n^{3/2}} \nu_{\lfloor nt \rfloor} \right)_{t \in [0,T]}$ converges in distribution, relatively to the $J_1$-metric, to $(\Gamma^{-2}e_I\int L_t^2(x)dx)_{t \in [0,T]}$ for the probability measure $\mup$ on $\Mp$. Thus applying Theorem $1$ from \cite{zweimuller}, we proved Theorem \ref{grosthmdiscret}.
\end{proof}
%

\begin{proof}[Proof of Theorem~\ref{grosthmcon}]
This result will be a consequence of the Propositions~\ref{propinterm2} (limit theorem with limit expressed in terms of the Poincar\'e section) and~\ref{defi_e'I} (intrinsic expression of the limit) that follow below.
\end{proof}
We remind here that $n_t$ is the number of crossing of the Poincaré section up to time $t$ :\\
$$
n_t(x)=\sup \left\{ n, \sum_{k=0}^{n-1}\tau\circ T^k(x)\leq t \right\} .
$$ 
for $n \in \mathbb N$, and $x \in M$.

Since  $(n_t)_{t\geq 0}$ is non decreasing in $t$ and not bounded, Birkhoff theorem on $(\Mp, \tp, \mup)$ gives, for $\mup$-almost every $x \in \Mp \subset M$
\begin{align}\label{birk}
\lim_{t \rightarrow \infty} \frac{t} {n_t}=\lim_{t \rightarrow \infty} \frac{1}{n_t}\sum_{k=0}^{n_t}\tau \circ \tp^k \underset{p.s}{ =} E_{\mup}(\tau).
\end{align}

The convergence then holds $\mu_0$-almost surely on $M$. 

\begin{prop}\label{propinterm}
For any $S \in \mathbb{R}_+$, the process  $(\frac{1}{t^{3/2}}\nu_{n_{ts}}\circ \pi)_{s \in [0,S]}$ (defined on the suspension system $(M_\tau, \f^{\tau} , \mu_{\tau})$) strongly converges in distribution (for the $J_1$ metric), with respect to $\mu_\tau$, to $\left(E_{\mup}(\tau)^{-3/2}\Gamma^{-2}e_I\int L_s^2(x)dx \right)_{s \in [0,S]}$ when $t$ goes to infinity.
\end{prop}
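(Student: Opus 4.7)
Set $c:=E_{\mup}(\tau)$ and $n:=\lfloor t/c\rfloor$. The strategy is to deduce the continuous-time statement from Theorem~\ref{grosthmdiscret} through a random time change driven by $n_t$, in the same spirit as the passage $\nu_{\lfloor n\cdot\rfloor}\leadsto \nu_n^0$ already used in the discrete setting.

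First, I would upgrade the pointwise Birkhoff statement \eqref{birk} to a functional one. Since $s\mapsto n_{ts}/t$ is nondecreasing and the candidate limit $s/c$ is continuous, the generalized second Dini theorem (Lemma \ref{lemdini}) gives
$$\sup_{s\in[0,S]}\Big|\frac{n_{ts}}{t}-\frac{s}{c}\Big|\xrightarrow[t\to\infty]{}0,\quad \mu_0\text{-a.s.,}$$
and the extension to a $\mu_\tau$-a.s.\ statement is immediate, since changing the fiber coordinate shifts $n_{ts}$ by at most one.

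Next, I would set $\Psi_n(u):=n^{-3/2}\,\nu_{\lfloor nu\rfloor}\circ\pi$ and $\phi_t(s):=n_{ts}/n$, so that
$$\frac{1}{t^{3/2}}\,\nu_{n_{ts}}\circ\pi=\Big(\frac{n}{t}\Big)^{3/2}\,(\Psi_n\circ\phi_t)(s).$$
Applying Theorem~\ref{grosthmdiscret} on the enlarged interval $[0,S/c+1]$ gives strong convergence in distribution, in the $J_1$ topology, of $\Psi_n$ toward the almost surely continuous process $\Psi(u):=e_I\Gamma^{-2}\int_{\mathbb R}L_u^2(x)\,dx$, while the first step provides $\phi_t\to\mathrm{id}$ uniformly $\mu_\tau$-a.s.\ and the deterministic identity $(n/t)^{3/2}\to c^{-3/2}$. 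A Slutsky-type argument in $J_1$ then combines these ingredients: via a Skorohod representation I would realize $\Psi_n\to\Psi$ almost surely in $J_1$ on a common probability space, so that joint convergence of the pair $(\Psi_n,\phi_t)$ to $(\Psi,\mathrm{id})$ holds almost surely; continuity of the composition map $(\chi,\psi)\mapsto\chi\circ\psi$ at pairs $(\chi,\mathrm{id})$ with $\chi$ continuous then yields $\Psi_n\circ\phi_t\to\Psi$ in $J_1$, whence
$$\Big(\frac{1}{t^{3/2}}\,\nu_{n_{ts}}\circ\pi\Big)_{s\in[0,S]}\xrightarrow[t\to\infty]{J_1}\Big(c^{-3/2}\,e_I\,\Gamma^{-2}\int_{\mathbb R}L_s^2(x)\,dx\Big)_{s\in[0,S]}.$$
Finally, to upgrade to \emph{strong} convergence with respect to $\mu_\tau$, I would invoke Zweim\"uller's theorem exactly as at the end of the proof of Theorem~\ref{grosthmdiscret}, noting that any probability density with respect to $\mu_\tau$ fibers over a probability absolutely continuous with respect to $\mu$ on the section $M$, and that conservativity and ergodicity of the flow transfer from those of the base system.

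The main obstacle is the composition step: combining the $\mu_\tau$-a.s.\ uniform convergence of the random time change $\phi_t$ with the merely distributional convergence of $\Psi_n$, when $\phi_t$ and $\Psi_n$ are \emph{not} independent. Continuity of the limiting local-time functional $\Psi$ (inherited from the continuity of the Brownian local time in its time variable) is essential here: without it, composition with a time change converging only uniformly would generically create spurious jumps in $J_1$. The Skorohod coupling circumvents the dependence between $\phi_t$ and $\Psi_n$ by realizing both limits on the same probability space, reducing the argument to a deterministic statement about uniform convergence of continuous functions.
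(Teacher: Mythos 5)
Your proposal follows essentially the same plan as the paper: upgrade the pointwise Birkhoff convergence of $n_{ts}/t$ to a uniform one, perform a random time change on the discrete-time process from Theorem~\ref{grosthmdiscret}, identify the limit, and then pass to strong convergence on the suspension via Zweim\"uller. The paper executes the time-change step by citing Billingsley's random-time-change theorem (Theorem~3.9 of \cite{billingsley2}), which directly combines the $J_1$ convergence of $\frac{1}{t^{3/2}}\nu_{\lfloor t\cdot\rfloor}$ with the a.s.\ convergence of $n_{ts}/t\to s/E_{\mup}(\tau)$; it therefore obtains $\int_{\mathbb R}L_{s/E_{\mup}(\tau)}^2(x)\,dx$ as the limit and then rewrites it as $E_{\mup}(\tau)^{-3/2}\int_{\mathbb R}L_s^2(x)\,dx$ via Brownian self-similarity. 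You instead set $n=\lfloor t/c\rfloor$ so that the time change $\phi_t(s)=n_{ts}/n$ tends to the identity and the factor $(n/t)^{3/2}\to c^{-3/2}$ emerges by pure scaling; this avoids invoking self-similarity of Brownian local time and is slightly more economical, a genuine simplification of that step.

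One sentence in the proposal should be tightened. You write that a Skorohod representation realizing $\Psi_n\to\Psi$ a.s.\ on a new probability space \emph{then} gives a.s.\ joint convergence of $(\Psi_n,\phi_t)$. As stated this is backwards: a Skorohod representation of $\Psi_n$ alone does not bring $\phi_t$ along, and one cannot apply Skorohod to the pair before first establishing joint distributional convergence. The clean route is the converging-together (Slutsky) argument: because $\phi_t$ converges a.s.\ (hence in probability) to the \emph{deterministic} function $\mathrm{id}$, the pair $(\Psi_n,\phi_t)$ converges in distribution to $(\Psi,\mathrm{id})$; a Skorohod representation of this pair (or directly Billingsley's Theorem~3.9) then delivers $\Psi_n\circ\phi_t\Rightarrow\Psi$ by continuity of composition at $(\chi,\mathrm{id})$ with $\chi$ continuous. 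This is precisely what the paper does by citation; your argument is correct in substance once the logical order of Slutsky and Skorohod is swapped.
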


\begin{proof}
Due to Theorem~\ref{grosthmdiscret} and
comparing $\left(\frac{1}{t^{3/2}}\nu_{\lfloor ts \rfloor} \right)_{s \in [0,S]}$ and
$\left( \frac{1}{\lfloor t \rfloor^{3/2}}\nu_{\lfloor s \lfloor t \rfloor \rfloor}\right)_{s \in [0,S]}$,
 the process $\left( \frac{1}{t^{3/2}}\nu_{\lfloor ts \rfloor}\right)_{s \in [0,S]}$ still converges as $t$ goes to infinity towards $\left(  \Gamma^{-2}e_I\int_{\mathbb{R}} L_s^2(x)dx \right)_{s \in [0,S]}$.\\

According to \cite [Theorem 3.9]{billingsley2}, the convergence in distribution (for the $J_1$-metric) with respect to measure $\mu_0$ of $\frac{1}{t^{3/2}}\nu_{\lfloor t. \rfloor}$ along with the $\mu_0$-almost sure convergence of $(\frac{n_{ts}}{t})_{s \in [0,S]}$ to $\left(\frac{s}{E_{\mup}(\tau)}\right)_{s \in [0,S]}$ implies the following convergence in distribution 
 $$
\lim_{t \rightarrow \infty} \frac{1}{t^{3/2}}\nu_{n_{ts}}\overset {\mathcal {L}_{\mu_0}} = \Gamma^{-2}e_I\int_{\mathbb{R}} L_{s/E_{\mup}(\tau)}^2(x)dx.
 $$

Thanks to the self similarity of Brownian motion, $\int_{\mathbb{R}} L_{s/E_{\mup}(\tau)}^2(x)dx$ satisfies for any  measurable function $f$ on $\mathbb{R}$:
\begin{align}
\int_{\mathbb{R}}f(x) L_{s/E_{\mup}(\tau)}(x)dx&=\int_0^{s/E_{\mup}(\tau)}f(B_t)dt\label{eqocupform1}\\
&=E_{\mup}(\tau)^{-1}\int_0^sf(B_{t/E_{\mup}(\tau)})dt\nonumber\\
&=E_{\mup}(\tau)^{-1}\int_0^sf\left(\frac 1 {E_{\mup}(\tau)^{1/2}} B'_t \right)dt \nonumber\\
&=E_{\mup}(\tau)^{-1}\int_{\mathbb{R}}f(\frac{x}{E_{\mup}(\tau)^{1/2}}) L'_{s}(x)dx\label{eqocupform2}\\
&=\int_{\mathbb{R}}f(u)E_{\mup}(\tau)^{-1/2} L'_{s}(uE_{\mup}(\tau)^{1/2})du,\nonumber
\end{align}
where $B'$ is a Brownian motion with the same distribution as $B$ and $(L'_s)_{s \geq 0}$ an associated local time and where relations \eqref{eqocupform1} and \eqref{eqocupform2} are deduced from occupation time formula.
Thus $L_{s/E_{\mup}(\tau)}=E_{\mup}(\tau)^{-1/2} L'_{s}\left(.\, E_{\mup}(\tau)^{1/2}\right)$ almost surely. 
Therefore
\begin{align*}
\lim_{t \rightarrow \infty} \frac{1}{t^{3/2}}\nu_{n_{ts}}&\overset {\mathcal {L}_{\mu_0}} = \Gamma^{-2}e_I\int_{\mathbb{R}} L_{s/E_{\mup}(\tau)}^2(x)dx \overset {\mathcal {L}}=E_{\mup}(\tau)^{-1}\Gamma^{-2}e_I\int_{\mathbb{R}} L_s^2(xE_{\mup}(\tau)^{1/2})dx  \\
&= E_{\mup}(\tau)^{-3/2}\Gamma^{-2}e_I\int_{\mathbb{R}} L_s^2(x)dx.
\end{align*}

Since
$$
 \left\|\left(\frac{1}{t^{3/2}}\nu_{n_{ts}}\right)\circ T-\frac{1}{t^{3/2}}\nu_{n_{ts}}\right\|_{\infty, [0,S]} \underset {t \rightarrow \infty} \rightarrow 0,
$$
the family of processes $\left((\frac{1}{t^{3/2}}\nu_{n_{ts}})_{s \geq 0}\right)_{t \geq 0}$ satisfies the assumption of Theorem $1$ from \cite{zweimuller} for probability measure $\mu_0$, thus it strongly converges in distribution ($J_1$) on the system $(M,T,\mu)$ to the process  $E_{\mup}(\tau)^{-3/2}\Gamma^{-2}e_I\int_{\mathbb{R}} L_s^2(x)dx$ when $t$ tends to infinity.\\

Let now show the convergence in law for the family of processes $\left((\frac{1}{t^{3/2}}\nu_{n_{ts}}\circ \pi)_{s \geq 0}\right)_{t \geq 0}$ on the suspension flow $(M_\tau, \f^{\tau} , \mu_{\tau})$ :\\


Let $P$ be a probability measure absolutely continuous according to $\mu_{\tau}$ and $F$ a continuous function on $\mathbb{R}$, denoting $dP(x,u)=g(x,u)d\mu_{\tau}(x,u)$ :

\begin{align*}
E_{\mu_\tau}\left(f\left(\left(\frac{1}{t^{3/2}}\nu_{n_{ts}}\circ \pi \right)_{s \geq 0}\right)\right)&=\int_M \int_0^{\tau(x)}f\left(\left(\frac{1}{t^{3/2}}\nu_{n_{ts}}\circ \pi(x,u)\right)_{s \geq 0}\right)g(x,u)d\lambda d\mu \\
&=\int_M f \left(\left(\frac{1}{t^{3/2}}\nu_{n_{ts}}(x,0)\right)_{s \geq 0}\right)\int_0^{\tau(x)}g(x,u)d\lambda d\mu.
\end{align*}

Since $\int_0^{\tau(x)}g(x,u)d\lambda d\mu$ is a probability measure on $M$ absolutely continuous with respect to the measure $\mu$. The strong convergence in law of $\left(\left(\frac{1}{t^{3/2}}\nu^0_{n_{ts}}\right)_{s \geq 0}\right)_{t\geq 0}$ over $(M,T,\mu)$ gives the conclusion : for $S >0$,
$$
E_{\mu_\tau}\left(f \left(\left(\frac{1}{t^{3/2}}\nu_{n_{ts}}\right)_{s \in[0,S]}\right)\right) \underset {t \rightarrow \infty} {\rightarrow} E_{\mu_\tau}\left(f\left( \left(\Gamma^{-2}e_I\int_{\mathbb{R}} L_s^2(x)dx\right)_{s \in[0,S]}\right)\right).
$$ 
In other words, the sequence of processes $\left(\left(\frac{1}{t^{3/2}}\nu_{n_{ts}}\right)_{s \in[0,S]}\right)_{t\geq 0}$ strongly converges in law on $(M_\tau, \f , \mu_{\tau})$ to $\left( E_{\mup}(\tau)^{-3/2}\Gamma^{-2}e_I\int_{\mathbb{R}} L_s^2(x)dx\right)_{s \in[0,S]}$.
\end{proof}
We are now ready to prove the following result which is very close to Theorem  \ref{grosthmcon}.
\begin{prop}\label{propinterm2}
Under the assumptions of Theorem \ref{grosthmcon}. For any $S>0$,
$$
\left(\frac 1 {t^{3/2}} \Nt_{ts}\right)_{s\in [0,S]}\overset {\mathcal L^*_{\mathcal L
_0}}{\underset {t \rightarrow \infty} \rightarrow}\left( E_{\mup}(\tau)^{-3/2}\Gamma^{-2}e_I\int_{\mathbb{R}} L_s^2(x)dx \right)_{s \in [0,S]}.
$$
\end{prop}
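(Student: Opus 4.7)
The plan is to reduce the continuous-time self-intersection count $\Nt_{ts}$ to the discrete-time count $\nu_{n_{ts}}\circ\pi$, for which Proposition~\ref{propinterm} already supplies the required limit theorem. What separates $\Nt_t$ from $\nu_{n_t}\circ\pi$ is only the contribution of the two incomplete Poincar\'e cells at the endpoints of the flow segment, together with possibly a handful of complete cells near each endpoint, and hypothesis~(b) controls each such contribution by a constant times the number of cells visited.

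First I would establish a pointwise comparison. For $(x,u)\in M_\tau$, the suspension flow segment $\f_{[0,t]}(x,u)$ coincides (after the identification of $(M_\tau,\f^\tau,\mu_\tau)$ with $(\M_0,\f^0,\LG_0)$) with the $\M_0$-segment $\f^0_{[u,u+t]}(x)$, which decomposes as an initial partial cell inside $[x]$, finitely many complete Poincar\'e cells $[T^jx]$, and a final partial cell. By hypothesis~(b), the self-intersections within each partial cell are bounded by $D$, the cross-intersections between any two cells are bounded by $D$, and the number of complete cells swept is $O(n_t(\pi(x,u)))$. Summing the boundary contributions gives, for $\mu_\tau$-almost every $(x,u)$,
\begin{equation*}
\bigl|\Nt_t(x,u)-\nu_{n_t(\pi(x,u))}(\pi(x,u))\bigr|\le C\,n_t(\pi(x,u))+C.
\end{equation*}

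Next, Birkhoff's theorem in the form~\eqref{birk} gives $n_t(x)/t\to E_{\mup}(\tau)^{-1}$ for $\mu_\tau$-almost every $(x,u)$, so that $n_{ts}=O(t)$ uniformly in $s\in[0,S]$ and
\begin{equation*}
\frac{1}{t^{3/2}}\sup_{s\in[0,S]}\bigl|\Nt_{ts}(x,u)-\nu_{n_{ts}(\pi(x,u))}(\pi(x,u))\bigr|=O(t^{-1/2})\underset{t\to\infty}{\longrightarrow}0,\quad \mu_\tau\text{-a.s.}
\end{equation*}
Since the candidate limit process $(E_{\mup}(\tau)^{-3/2}\Gamma^{-2}e_I\int_{\mathbb R}L_s^2(x)dx)_{s\in[0,S]}$ is continuous in $s$, $\mu_\tau$-almost sure uniform convergence on $[0,S]$ implies $J_1$-convergence; Slutsky's lemma applied within each probability $\mathbf P\ll\mathcal L_0\simeq\mu_\tau$ then transfers the strong convergence in distribution of $(\frac{1}{t^{3/2}}\nu_{n_{ts}}\circ\pi)_{s\in[0,S]}$ provided by Proposition~\ref{propinterm} to the process $(\frac{1}{t^{3/2}}\Nt_{ts})_{s\in[0,S]}$, yielding the statement of the proposition.

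The main obstacle I expect is the pointwise comparison of the first step: without the uniform $D$-bound of hypothesis~(b), the intersections between the two incomplete end-cells and the long interior trajectory could in principle grow like $n_t$ times a mean number of intersections per cell, itself of order $t^{1/2}$ by the analysis underlying Theorem~\ref{grosthmdiscret}, which would already be of the order $t^{3/2}$ of $\Nt_t$ and would destroy the reduction. With hypothesis~(b) the boundary contribution is only $O(n_t)=O(t)$ and the resulting $t^{-1/2}$ decay after normalisation is enough.
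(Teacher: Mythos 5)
Your proof is correct and follows essentially the same route as the paper: hypothesis~(b) yields the sandwich $\nu_{n_t}(x)\le \Nt_t(x,u)\le \nu_{n_t}(\text{shift})+O(n_t)$, the normalised difference is then controlled via Birkhoff's theorem, and Slutsky combined with Proposition~\ref{propinterm} finishes the argument. The only cosmetic difference is that you pass to $\mu_\tau$-a.s.\ uniform convergence of the error term before invoking Slutsky, while the paper phrases the same control as an $L^1_P$ convergence of the sup-norm difference; both give the needed convergence in $P$-probability for every $P\ll\mu_\tau$.
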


\begin{proof}[Proof of Proposition \ref{propinterm2}]
let just remind that the number of self intersections after the last collision $n_t$, $\nu_{n_t}$ gives a bound over the number $\Nt_t$ of self intersections up to $t$ : 
\begin{align}\label{encnt}
\nu_{n_t}(x)\leq \mathcal{N}_{t}(x,s) \leq \nu_{n_t+1}(Tx) \leq \nu_{n_t}(Tx)+ 2(n_t+1)D.
\end{align}
Where the upper bound is given by hypothesis (b). 
Equation \eqref{encnt} ensures that for any probability measure $P$ absolutely continuous with respect to $\mu_\tau$, and any $S>0$,
$$
E_P(\frac 1 {t^{3/2}} \|\left(\nu_{n_{ts}}\right)_{s \geq 0}- \left(\Nt_{ts}\right)_{s \geq 0}(x,u)\|_{\infty,[0,S]} ) \underset {t \rightarrow \infty} \rightarrow 0.
$$
Thus, it follows from Proposition~\ref{propinterm}, combined with the Slutsky theorem and \eqref{birk} (ensuring the convergence of $(\frac{t}{n_t}$) that
$$
\lim_{t \rightarrow \infty} \left(\frac 1 {t^{3/2}} \Nt_{ts}\right)_{s \in [0,S]}=\lim_{t \rightarrow \infty} \left( \frac 1 {t^{3/2}} \nu_{n_{ts}}\right)_{s \in [0,S]} \overset {\mathcal {L}_{\mu_0}} = \left( E_{\mup}(\tau)^{-3/2}\Gamma^{-2}e_I\int_{\mathbb{R}} L_s^2(x)dx \right)_{s \in [0,S]}. 
$$


\end{proof}

\subsection{Intrinsic expression of the limit}\label{identconst}

Here we express the limit appearing in Proposition~\ref{propinterm2} in terms of the flow, and will make appear Lalley's constant. This will conclude the proof of 
Theorem \ref{grosthmcon}.

Since $(\mathcal{M},\f_t,L)$  is isomorphic to the suspension flow $(\Mp_{\tau}, \tilde \f^{\tau}_t, \mup_{\tau}/E_{\mup}(\tau))$ with roof function $\tau$, we use the latest notations in what follows.

\begin{lem}\label{lemidei}
Thanks to hypothesis (b') and remark \ref{rmq}, $e_I$ satisfies
$$
e_I=\Gamma^2\int_{\Mp}\int_{\Mp} |\overline{\pi}_\R( \f^0_{[0, \tau(x)]}(x))\cap \overline{\pi}_\R( \f^0_{[0, \tau(y)]}(y))| d\mup(x) d\mup(y).
$$

\end{lem}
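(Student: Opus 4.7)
The argument should be a direct consequence of the $\Z$-extension structure together with hypothesis~(b'). First, since $(M,T,\mu/\Gamma)$ is the $\Z$-extension of $(\Mp,\tp,\mup)$ by $\phi$, every $y\in M$ may be written uniquely as $y=\D^n z$ with $z\in\Mp$ and $n\in\Z$, and $\mu$ decomposes as
$$\int_M f\,d\mu=\Gamma\sum_{n\in\Z}\int_\Mp f(\D^n z)\,d\mup(z),\qquad \mu|_\Mp=\Gamma\,\mup.$$
Plugging this into the definition $e_I=\int_{\Mp\times M}|[y]\cap[x]|\,d\mu(y)\,d\mu(x)$ yields
$$e_I=\Gamma^2\int_\Mp\!\int_\Mp\sum_{n\in\Z}\bigl|[\D^n z]\cap[x]\bigr|\,d\mup(z)\,d\mup(x).$$

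Next I would use the equivariance relations $\tau\circ\D=\tau$ and $\D_0\circ\f^0_t=\f^0_t\circ\D_0$ (Remark~\ref{rmq}) to obtain $\f^0_{[0,\tau(z)]}(\D^n z)=\D_0^n\f^0_{[0,\tau(z)]}(z)$, and consequently $[\D^n z]=D^n[z]$, where $D$ stands for the induced $\Z$-action on $\R$ through $\pi_\R$. The main (and only substantial) step is then to identify $\sum_{n\in\Z}|D^n[z]\cap[x]|$ with $\bigl|\bar\pi_\R(\f^0_{[0,\tau(x)]}(x))\cap\bar\pi_\R(\f^0_{[0,\tau(z)]}(z))\bigr|$. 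Writing the quotient map on positions as $\bar\pi:\R\to\bar\R$, so that $\bar\pi_\R=\bar\pi\circ\pi_\R$, hypothesis~(b') asserts that $D^i[x]\cap[x]=\emptyset$ for every $i\ne 0$; hence the restriction of $\bar\pi$ to the trajectory $[x]$ is injective, and likewise for $[z]$. It follows that every point $\bar q\in\bar\pi([x])\cap\bar\pi([z])$ admits a unique preimage $q_1\in[x]$ and a unique preimage $q_2\in[z]$, and these are related by $q_1=D^k q_2$ for a unique $k\in\Z$. The assignment $(q_1,k)\mapsto\bar\pi(q_1)$ is therefore a bijection
$$\bigsqcup_{k\in\Z}\bigl([x]\cap D^k[z]\bigr)\;\longleftrightarrow\;\bar\pi([x])\cap\bar\pi([z]),$$
so the two sides have equal cardinality. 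Substituting back into the preceding formula for $e_I$ produces exactly the right-hand side of the lemma.

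The approach is entirely elementary: no dynamical or probabilistic input beyond (b') and the definition of the $\Z$-extension is required. The main obstacle, such as it is, is setting up the bijection cleanly: one must invoke (b') for \emph{both} $[x]$ and $[z]$ to rule out double counting of quotient-level intersections, and use the $D$-invariance of $\bar\pi_\R$ to guarantee that the map $(q_1,k)\mapsto \bar\pi(q_1)$ is well-defined on the disjoint union. The measure-theoretic constants $\Gamma$ on each side then match automatically from the description of $\mu$ as $\Gamma$ times the canonical $\Z$-extension measure.
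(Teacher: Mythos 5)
Your proof is correct and follows essentially the same strategy as the paper's: split the $\mu$-integral over the $\Z$-fibres of the extension, push the sum over $k$ inside using $\tau\circ\D=\tau$ and the $\D_0$-equivariance of $\f^0$, and invoke (b') to identify $\sum_k|[x]\cap D^k[z]|$ with the cardinality of the quotient-level intersection. The only difference is packaging: the paper proves the last step via a chain of set identities parameterized by the time $s\in[0,\tau(y))$ along the $y$-trajectory, whereas you construct a direct bijection between position sets, which is a slightly cleaner way to organize the same double application of (b') (once to $[x]$ for injectivity of $\bar\pi$ on $[x]$, once to $[z]$ for pairwise disjointness of the $[x]\cap D^k[z]$).
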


\begin{proof}

\begin{align*}
e_I&:=\int_{\Mp}\int_{M} |[x]\cap [y]| d\mu(x) d\mu(y)\\
&=\Gamma^2\int_{\Mp}\int_{\Mp} \sum_{k \in \Z}|[x]\cap [\D^ky]| d\mup(x) d\mup(y)\\
&= \int_{\Mp}\int_{\Mp} \sum_{k \in \Z}|\pi_\R(\f^0_{[0,\tau(x)]}(x))]\cap \pi_\R(\f^0_{[0,\tau(y)]}(\D^ky))| d\mup(x) d\mup(y).
\end{align*}

Then for any $x,y \in \Mp$, along with hypothesis (b'),
\begin{align*}
\sum_{k \in \Z}|[x]\cap [\D^ky]|&=\sum_{k \in \Z}| \{s\in [0,\tau(y)), \pi_\R(\f^0_s(\D^ky))\in [x]\}|\\
&=|\bigcup_{k \in \Z}\{s\in [0,\tau(y)), \pi_\R(\D_0^k\f^0_s(y))\in [x]\}|\\
&=|\{s\in [0,\tau(y)),\exists k \in \Z, \pi_\R(\D_0^k\f^0_s(y))\in \pi_\R(\f^0_{[0,\tau(x)]}(x))\}|\\
&=|\{s\in [0,\tau(y)),\exists k \in \Z, u \in \f^0_{[0,\tau(x)]}(x), \pi_\R(\D_0^k\f^0_s(y))= \pi_\R(u)\}|\\
&=|\{s\in [0,\tau(y)),\exists u \in \f^0_{[0,\tau(x)]}(x), \bar \pi_\R(\f^0_s(y))=\bar \pi_\R(u)\}|\\
&=|\bar \pi_\R(\f^0_{[0,\tau(x))}(x))\cap \bar \pi_\R(\f^0_{[0,\tau(y)]}(y))|.
\end{align*}
and thus,
$$
e_I:=\Gamma^2\int_{\Mp}\int_{\Mp} |\overline{\pi}_\R( \f^0_{[0, \tau(x)]}(x))\cap \overline{\pi}_\R( \f^0_{[0, \tau(y)]}(y))| d\mup(x) d\mup(y).
$$
\end{proof}

\begin{prop}\label{defi_e'I}
Assuming hypotheses from Theorem \ref{grosthmcon},
$$
E_{\mup}(\tau)^{-3/2} \Gamma^{-2}e_I \int_{\mathbb{R}}L_s^2(x)dx=e_I' \int_{\mathbb{R}}\tilde{L}_s^2(x)dx
$$
where
\begin{align*}
e_I':&=\int_{\M_0 \times \M}| \pi_\R(\f^0_{[0,1)}(x)) \cap\pi_\R(\f^0_{[0,1)}(y))|d \L_0(y) dL(x)\\
&=\int_{\M \times \M}|\bar \pi_\R(\f^0_{[0,1)}(x))\cap \bar \pi_\R(\f^0_{[0,1)}(y))| dL(y) dL(x)
\end{align*}
is the mean intersections number between $\bar \pi_\R (\f_{[0,1]}(x))$ and $\bar \pi_\R (\f_{[0,1]}(y))$  for $x$ and $y$ taken independently according to distribution $L$, and where
 $(\tilde L_t)_{t\geq 0}$ is a continuous version of the local time of the Brownian motion $\tilde B$ seen as the limit in distribution of $\left( \frac{1}{t^{1/2}}S_{n_{tu}}\circ \pi(.)\right)_{u\in [0,T]}$.
\end{prop}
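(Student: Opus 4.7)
The plan is to decouple the claimed identity into two independent pieces: the deterministic constant identity
\begin{equation*}
e_I = \Gamma^2\, E_{\mup}(\tau)^2\, e_I',
\end{equation*}
and the distributional local-time identity (as processes in $s$)
\begin{equation*}
\int_{\mathbb R}\tilde L_s^2(x)\, dx \overset{d}{=} E_{\mup}(\tau)^{1/2}\int_{\mathbb R} L_s^2(x)\, dx.
\end{equation*}
Combining them gives $E_{\mup}(\tau)^{-3/2}\Gamma^{-2} e_I\int L_s^2 = E_{\mup}(\tau)^{1/2} e_I'\int L_s^2 \overset{d}{=} e_I'\int \tilde L_s^2$, which is the claim.

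For the constant identity, Lemma~\ref{lemidei} already gives $e_I = \Gamma^2 K$ with
$$
K := \int_{\Mp\times\Mp} |\bar\pi_\R(\f^0_{[0,\tau(x))}(x))\cap\bar\pi_\R(\f^0_{[0,\tau(y))}(y))|\, d\mup(x)\, d\mup(y),
$$
so it suffices to prove $K = E_{\mup}(\tau)^2 e_I'$. I would study the bilinear functional
$$
F(s,t) := \int_{\M}\int_{\M} |\bar\pi_\R(\f^0_{[0,s]}(z))\cap\bar\pi_\R(\f^0_{[0,t]}(w))|\, dL(z)\, dL(w).
$$
The $\f^0$-invariance of $L$ combined with the additivity of the intersection count under concatenation of trajectories (up to a $dL\otimes dL$-null set of boundary crossings) gives $F(s_1+s_2,t) = F(s_1,t) + F(s_2,t)$ and symmetrically in $t$, hence $F(s,t) = st\, F(1,1) = st\,e_I'$. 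Writing each transversal crossing on $\R$ as a point via the area/co-area formula (applicable thanks to the uniform bound $|[x]\cap[y]|\le D$ of hypothesis~(b) and the generic transversality of geodesic/billiard intersections) provides a nonnegative measurable density $g$ on $\M\times\M$ such that
$$
|\bar\pi_\R(\f^0_{[0,s]}(z))\cap\bar\pi_\R(\f^0_{[0,t]}(w))| = \int_0^s\int_0^t g(\f^0_a z,\f^0_b w)\, da\, db,
$$
whence $e_I' = \int\!\int g\, dL\otimes dL$. Reading $K$ through the suspension representation $L = (\mup\otimes\lambda)/E_{\mup}(\tau)$ on $\Mp_\tau$ one then obtains
$$
K = \int_{\Mp_\tau\times\Mp_\tau} g\, d\mup_\tau\otimes d\mup_\tau = E_{\mup}(\tau)^2 \int_{\M}\int_{\M} g\, dL\otimes dL = E_{\mup}(\tau)^2\, e_I'.
$$

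For the local-time identity, I would combine the definition of $\tilde B$ with Birkhoff's ergodic theorem applied to $\tau$: since $n_{tu}/t \to u/E_{\mup}(\tau)$ $\mup$-almost surely, passing to the limit in $t^{-1/2}S_{n_{tu}}$ yields $\tilde B_u \overset{d}{=} B_{u/E_{\mup}(\tau)}$ as processes. Writing $\tilde L_s(a)$ as the occupation density of $\tilde B$ and changing variable $v = r/E_{\mup}(\tau)$ gives $\tilde L_s(a) \overset{d}{=} E_{\mup}(\tau)\cdot L_{s/E_{\mup}(\tau)}(a)$; Brownian self-similarity $L_{\alpha t}(a) \overset{d}{=} \alpha^{1/2} L_t(\alpha^{-1/2}a)$ applied with $\alpha = 1/E_{\mup}(\tau)$ then yields
$$
\int_{\mathbb R}\tilde L_s^2(x)\, dx \overset{d}{=} E_{\mup}(\tau)^{1/2}\int_{\mathbb R} L_s^2(x)\, dx.
$$

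The main difficulty is the rigorous handling of the singular density $g$, which lives on the incidence variety $\{\bar\pi_\R(z)=\bar\pi_\R(w)\}\subset\M\times\M$. A cleaner purely measure-theoretic alternative avoiding $g$ altogether is to compare, as $T\to\infty$, two asymptotic expansions of $\int\!\int |\bar\pi_\R(\f^0_{[0,T]}(x,0))\cap\bar\pi_\R(\f^0_{[0,T]}(y,0))|\, d\mup(x)d\mup(y)$: the Poincaré-return decomposition of $\f^0_{[0,\sum_{k<n}\tau(\tp^k x)]}(x,0)$ into $\bigsqcup_{k=0}^{n-1}\f^0_{[0,\tau(\tp^k x))}(\tp^k x,0)$, combined with $\tp$-invariance of $\mup$, yields leading order $n^2 K$ with $n\sim T/E_{\mup}(\tau)$ by Birkhoff; on the other hand, $F(T,T) = T^2 e_I'$ is transferred to the $\mup$-average via the suspension factorisation $L = (\mup\otimes\lambda)/E_{\mup}(\tau)$ together with boundary-shift estimates of order $O(T)$ (themselves guaranteed by hypothesis~(b), which bounds the number of intersections produced by any trajectory piece of bounded roof-length). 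Identifying the two expansions recovers $K = E_{\mup}(\tau)^2 e_I'$.
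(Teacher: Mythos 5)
Your two-step decomposition (constant identity $e_I = \Gamma^2 E_{\mup}(\tau)^2 e_I'$, plus the distributional identity $\int \tilde L_s^2 \overset{d}{=} E_{\mup}(\tau)^{1/2}\int L_s^2$) is exactly the structure of the paper's proof, and your treatment of the local-time half matches the paper's: time change $\tilde B_u \overset{d}{=} B_{u/E_{\mup}(\tau)}$ from Birkhoff, occupation-time formula, and Brownian self-similarity.

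The gap is in the constant identity, and both of your proposed routes leave the essential work undone. The first route (a co-area density $g$ supported on the incidence variety) is, as you acknowledge, difficult to justify under the paper's abstract hypotheses; the intersection count is not assumed to disintegrate into a double integral of a density against $da\,db$. The second route (comparing large-$T$ asymptotics) runs into the issue that transferring $F(T,T)=T^2 e_I'$ from $dL\otimes dL$ to $d\mup\otimes d\mup$ via the suspension factorisation $L = \mup_\tau/E_{\mup}(\tau)$ produces $E_{\mup}(\tau)^{-2}\int_{\Mp}\int_{\Mp}\tau(x)\tau(y)\,|\cdots|\,d\mup(x)d\mup(y) + O(T)$, with an unavoidable $\tau(x)\tau(y)$ weight, while your Poincar\'e-return expansion produces an unweighted sum over random numbers $n_T(x),n_T(y)$ of returns. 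Reconciling the weight and the randomness of $n_T$ requires exactly the quantitative interchange of ergodic average and integration that you are trying to avoid, and ``identifying the two expansions'' does not come for free.

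What the paper actually proves (in Lemma~\ref{modz}) is a clean occupation-time identity: for a measurable set $\mathcal C$ whose visit count on flow-intervals of length one is uniformly bounded by $D$, setting $F(x):=|\{s\in[0,\tau(x)],\, \tilde\f^\tau_s(x)\in\mathcal C\}|$ on $\Mp$ and $f(x,t):=|\{s\in[0,1],\,\tilde\f^\tau_s(x,t)\in\mathcal C\}|$ on $\Mp_\tau$, one has $E_{\mup}(F)=E_{\mup_\tau}(f)$. The proof compares the two Birkhoff averages pathwise (they differ by a uniformly bounded quantity, thanks to hypothesis~(b)) and uses ergodicity. This identity is then applied twice, via Fubini, first freezing $y$ with $\mathcal C:=\bar\pi_\R^{-1}\bar\pi_\R(\f^\tau_{[0,\tau(y)]}(y))$ and then freezing $x$ with $\mathcal C:=\bar\pi_\R^{-1}\bar\pi_\R(\f^\tau_{[0,1]}(x,s))$, converting $\Gamma^{-2}e_I = \int_{\Mp}\int_{\Mp}|\cdots|\,d\mup\,d\mup$ directly into $E_{\mup}(\tau)^2\int_\M\int_\M|\cdots|\,dL\,dL = E_{\mup}(\tau)^2 e_I'$. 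This is the key lemma you are missing; once you have it, the rest of your argument closes.
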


\begin{proof}
 The Birkhoff relation \eqref{birk} page \pageref{birk} 
and hypothesis (f) ensure the convergence in distribution (for the $J_1$-metric) of $\left( n^{-1/2} S_{\lfloor nt\rfloor} \right)_{t \geq 0}$ to $(B_t)_{t\geq 0}$ with respect to the measure $\mup$.
Notice that for $S>0$, in one hand we have 
$$
\left\|t^{-1/2} S_{\lfloor st\rfloor}-t^{-1/2} S_{\lfloor s \lfloor t \rfloor \rfloor}\right\|_{\infty, [0,S]} \rightarrow 0 
$$
and on the other hand
$$
\left\| 1-\frac{\lfloor t \rfloor} t ^{-1/2} S_{\lfloor s \lfloor t \rfloor \rfloor}\right\|_{\infty, [0,S]} \overset {proba}\rightarrow 0, 
$$
thus the process $\left(t^{-1/2} S_{\lfloor st\rfloor}\right)_{s \in [0,S]}$ converges in law to the Brownian motion $(B_s)_{s \in [0,S]}$ as $t \rightarrow \infty$.
 Then for any $S>0$, theorem (3.9) from \cite{billingsley2} provides the convergence in distribution (for the $J_1$-metric) with respect to the probability measure $\mup$ :
$$
\left( t^{-1/2} S_{n_{ts}} \right)_{s  \in [0,S]} \underset{s \rightarrow \infty} {\rightarrow} (B_{s/E_{\mup}(\tau)})_{s \in [0,S]}
$$

 $\left( t^{-1/2} S_{n_{ts}} \right)_{s \in [0,S]}$ converge to $(\tilde B_s)_{s \in [0,S]}:=\left(E_{\mup}(\tau)^{-1/2} B_s \right)_{s \in [0,S]}$.\\

Here we study the local time $\tilde L$ and its related Brownian motion $\tilde B$ :
for any measurable function $f$ on $\mathbb{R}$, the occupation time formula and the self similarity of the Brownian motion give
\begin{align*}
\int_0^sf(E_{\mup}(\tau)^{-1/2}B_t)dt &=\int_{\mathbb{R}}f(\frac{x}{E_{\mup}(\tau)^{1/2}}) L_{s}(x)dx\\
&=\int_{\mathbb{R}}f(u)E_{\mup}(\tau)^{1/2} L_{s}(E_{\mup}(\tau)^{1/2}u)du.
\end{align*}

Thus $\tilde L_s=E_{\mup}(\tau)^{1/2} L_{s}\left(E_{\mup}(\tau)^{1/2}\, .\right)$ almost surely 
$$
\int_\mathbb{R} \tilde L_s^2(x)dx=\int_\mathbb{R}E_{\mup}(\tau) L_{s}^2\left(E_{\mup}(\tau)^{1/2}u\right)du=E_{\mup}(\tau)^{1/2}\int_\mathbb{R} L_{s}^2\left(x\right)dx,
$$
and 
$$
E_{\mup}(\tau)^{-3/2} \Gamma^{-2}e_I \int_{\mathbb{R}}L_s^2(x)dx=E_{\mup}(\tau)^{-2} \Gamma^{-2}e_I \int_{\mathbb{R}}\tilde{L}_s^2(x)dx.
$$
We conclude using the next lemma.
\begin{lem}\label{modz}
$$
E_{\mup}(\tau)^{-2} \Gamma^{-2}e_I=e_I'
$$
\end{lem}

\begin{proof}

Here we show that $\Gamma^{-2}e_I$ coincides with the measure
\begin{align*}
&\int_{\Mp}\int_0^{\tau(x)}\int_{\Mp}\int_{0}^{\tau(y)} |\pi_{\R}(\tilde \f^\tau_{[0,1]}(x,t))\cap \pi_{\R}(\tilde \f^{\tau}_{[0,1]}(y,s))|dt d\mup(x) ds d\mup(y)\\
&=E_{\mup}(\tau)^2\int_{\M}\int_{\M}|\overline{\pi}_\R(\f_{[0,1]}(x,t))\cap \overline{\pi}_\R( \f_{[0,1]}(y,s))|dLdL.
\end{align*}

The last equation above gives an expression of $\frac{e_I}{\Gamma^2E_{\mup}(\tau)^2}$ as the mean number of intersections between the "trajectories" of two flows running up to time $1$ with starting point taken randomly. 

Here let $\mathcal{C}$ be a measurable set such that for $\mup_\tau$-almost all $x$ 
\begin{align*}
|\{s \in [0,1], \tilde \f^{\tau}_s(x,t) \in \mathcal{C}\}| \leq D.
\end{align*}
Denote for any $x \in \Mp$,
$$
F(x):=|\{ s \in [0, \tau(x)], \tilde \f^{\tau}_s(x) \in \mathcal{C}\}|.
$$
and for $(x,t) \in \Mp_{\tau}$, 
$$
f(x,t):= |\{s \in [0,1], \tilde \f^{\tau}_s(x,t) \in \mathcal{C}\}|.
$$
Suppose both functions are bounded by some constant $D$, 
let's show that 
$$
E_{\mup} (F)=E_{\mup_\tau}(f).
$$
Birkhoff relation \eqref{birk} page \pageref{birk} on $n_t$ along with the Birkhoff theorem applied to $F$ ensures that for $\overline\mu$-almost all $x \in \Mp$,
\begin{align*}
\frac{1}{t}\sum_{k=0}^{n_t-1}F\circ \tp^k (x) \underset {t \rightarrow \infty
} {\rightarrow} \frac{E_{\mup}(F)}{E_{\mup}(\tau)}.
\end{align*}
The left hand side sum corresponds to 
$$
\left|\left\{s \in [0, \sum_{k=0}^{n_t} \tau \circ \tp^k(x)], \, \tilde \f^{\tau}_s(x) \in \mathcal{C} \right\}\right|.
$$
whereas
$$
\frac{1}{T} \int_0^T f\circ \tilde \f^{\tau}_s(x,t)ds \underset {N \rightarrow \infty, \mup\,ps} {\rightarrow} \frac{ E_{\mup_{\tau}}(f)}{E_{\mup}(\tau)}
$$
where left hand side integral means, by definition of $f$,
$$
\int_0^T f\circ \tilde \f^{\tau}_s(x,t)ds=\sum_{s \in [0, \infty[, \f^{\tau}_s(x,t) \in \mathcal{C}}\lambda([0,T]\cap [s-1,s])
=|\{s \in [1, T], \f^{\tau}_s(x,t) \in \mathcal{C}\}|\pm 2D. 
$$ 
Thus, for $(x,u) \in \M_{\tau}$

\begin{align*}
&\left|\int_0^t f\circ \tilde \f^{\tau}_s(x,u)ds-\sum_{k=0}^{n_t-1}F\circ \tp^k \circ \pi(x,u)\right|\\
&\leq 4D+\left| \left\{ s \in [0,u]\cup[\sum_{k=0}^{n_t} \tau \circ \tp^k(x), t+u], \f^{\tau}_s(x,u) \in \mathcal{C} \right\}\right| \\
 &\leq 4D+2 \lceil u \rceil D\\
 & \leq 6D.
\end{align*}

Taking the limit as $t \rightarrow \infty$ in the above Birkhoff sum we get
$$
\frac{E_{\mup}(F)}{E_{\mup}(\tau)}=\frac{ E_{\mup_{\tau}}(f)}{E_{\mup}(\tau)}.
$$ 

Thus taking $B:=\overline{\pi}_{\R}^{-1}(\overline{\pi}_{\R}(\f^{\tau}_{[0,\tau(y)]}(y)))$ and applying Fubini's theorem :
\begin{align*}
&\int_{\Mp}\int_{\Mp} |\overline{\pi}_\R(\tilde \f^{\tau}_{[0, \tau(x)]}(x))\cap \overline{\pi}_\R(\tilde \f^{\tau}_{[0, \tau(y)]}(y))| d\mup(x) d\mup(y)\\
&=\int_{\Mp}\int_{\Mp}\int_0^{\tau(x)} |\overline{\pi}_\R(\tilde \f^{\tau}_{[0, 1]}(x,s))\cap \overline{\pi}_\R(\tilde \f^{\tau}_{[0, \tau(y)]}(y))| dt d\mup(x) d\mup(y)\\
&=\int_{\Mp}\int_0^{\tau(x)}\int_{\Mp} |\overline{\pi}_\R(\tilde \f^{\tau}_{[0, 1]}(x,s))\cap \overline{\pi}_\R(\tilde \f^{\tau}_{[0, \tau(y)]}(y))|  d\mup(y) dtd\mup(x)
\end{align*}
Then, choosing $B:=\overline{\pi}_{\R}^{-1}(\overline{\pi}_{\R}(\f^{\tau}_{[0,1]}(x)))$, the previous reasoning gives :
\begin{align*}
&\int_{\Mp}\int_0^{\tau(x)}\int_{\Mp} |\overline{\pi}_\R(\tilde \f^{\tau}_{[0, 1]}(x,s))\cap \overline{\pi}_\R(\tilde \f^{\tau}_{[0, \tau(y)]}(y))|  d\mup(y) dtd\mup(x)\\
&=\int_{\Mp}\int_0^{\tau(x)}\int_{\Mp}\int_0^{\tau(y)} |\overline{\pi}_\R(\tilde \f^{\tau}_{[0, 1]}(x,s))\cap \overline{\pi}_\R(\tilde \f^{\tau}_{[0, 1]}(y,s))|  dsd\mup(y) dtd\mup(x).
\end{align*}

\end{proof}

Lemma \ref{modz} thus gives the identification
$$
E_{\mup}(\tau)^{-3/2} \Gamma^{-2}e_I \int_{\mathbb{R}}L_s^2(x)dx=e_I' \int_{\mathbb{R}}\tilde{L}_s^2(x)dx.
$$
\end{proof}

\begin{appendix}

\section{Self-intersections for $\Z^3$ extensions}\label{seczd}

This section is devoted to a quick investigation of the stochastic behavior of self intersections on $\mathbb{Z}^d$-extensions of chaotic systems when $d\geq 3$ with theorem \ref{d3} stated on the extended settings of group extensions defined as follows.

\begin{defn}
Let $(\M,\f,L)$ be a probability preserving ergodic flow and $(G,+)$ an infinite countable commutative group equipped with counting measure $g$.\\
Let $h_t:\mathbb R \times \M\rightarrow G$ be a cocycle, (i.e $h_{t+s}(x)=h_t(x)+h_s(\f_t(x))$). The $G$-extension  over $(\M,\f,L)$ is the dynamical system $(\M_0,\f^0,L_0)$ with $\M_0:=\M \times G$, a flow $\f^0$ defined for any $(x,a)\in \M_0$ by,
$$
\f^0_t(x,a):=(\f_t(x),a+h_t(x)),
$$
and a measure $L_0$ defined as the formal sum $L_0:=\sum_{g\in G} L \otimes g$.
\end{defn}

$G$-extensions conveniently extends the notion of $\Z^d$-extension over special flows as stated in definitions \ref{defext} and \ref{defflow} to any commutative countable group.
The following theorem then gives some asymptotic behavior 
of the number $\Nt_t$ of self intersections which is only relevant in the case of a non recurrent system.
The corollary \ref{corzdext} derived from that theorem states the almost sure convergence of $\Nt_t$ in the case of $\Z^d$-extensions over chaotic suspension flows.\\


\begin{thm}\label{d3}
Let $(G,+)$ be a countable commutative group, $\R$ a set, $(\M_0,\f^0,L_0)$ a $G$-extension over the probability preserving flow $(\M,\f,L)$ 
 and $\pi_{\R}:\M_0 \rightarrow \R$ a map such that for any $(x,a)$ and $(x',a')\in \M_0$,
\begin{align}\label{relationpi}
\pi_{\R}(x,a)=\pi_{\R}(x',a') \Leftrightarrow \pi_{\R}(x,0)=\pi_{\R}(x',a'-a). 
\end{align}
Then the quantity defined for $t\geq 0$ and $y \in \M_0$ by
$$
\Nt_t(y):=|\{(s,u)\in [0,t]^2 : s\neq u, \pi_{\R}(\f^0_s(y))=\pi_{\R}(\f^0_u(y))\}|
$$
satisfies :
$$
\frac{\Nt_t}{t}\overset{L_0-p.p}{\rightarrow}E_L(I),
$$
where
\begin{align*}
&I(x):=|\{(s,u) \in [0,1[^2 : s\neq u, \pi_{\R}(\f^0_s(x,0))=\pi_{\R}(\f^0_u(x,0))\}|\\
&+2|\{(s,u) \in [0,1[\times [1,+\infty[ : s\neq u, \pi_{\R}(\f^0_s(x,0))=\pi_{\R}(\f^0_u(x,0))\}|.
\end{align*}

\end{thm}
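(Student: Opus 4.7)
The plan is to apply the Birkhoff ergodic theorem on $(\M,\f,L)$ after decomposing $\Nt_t$ into Birkhoff-type sums at integer times. I first exploit the group structure to eliminate the $G$-coordinate: writing $y=(x,a)$, the identity $\f^0_s(y)=(\f_s x,a+h_s(x))$ together with the shift invariance \eqref{relationpi} shows that $\pi_{\R}(\f^0_s y)=\pi_{\R}(\f^0_u y)$ if and only if $\pi_{\R}(\f_s x,0)=\pi_{\R}(\f_u x,h_u(x)-h_s(x))$, which is independent of $a$. So $\Nt_t(y)$ depends only on $x$ and I may assume $y=(x,0)$. Then for an integer $n\ge 1$ I tile $[0,n)^2$ by unit squares and use \eqref{relationpi} together with the cocycle identity $h_j(x)=h_i(x)+h_{j-i}(\f_i x)$ to identify the contribution of the square $[i,i+1)\times[j,j+1)$ with $j>i$ as $J_{j-i}(\f_i x)$, and of a diagonal square $(i,i)$ as $I_1(\f_i x)$, where
\begin{align*}
I_1(x)&:=\left|\{(s,u)\in[0,1)^2: s\ne u,\ \pi_{\R}(\f^0_s(x,0))=\pi_{\R}(\f^0_u(x,0))\}\right|,\\
J_k(x)&:=\left|\{(s,u)\in[0,1)^2:\ \pi_{\R}(\f^0_s(x,0))=\pi_{\R}(\f^0_{k+u}(x,0))\}\right|.
\end{align*}
The symmetry $(s,u)\leftrightarrow(u,s)$ then yields
$$
\Nt_n(x,0)=\sum_{i=0}^{n-1}I_1(\f_i x)+2\sum_{k=1}^{n-1}\sum_{i=0}^{n-1-k}J_k(\f_i x),
$$
and one recognizes $I(x)=I_1(x)+2K_\infty(x)$ with $K_\infty(x):=\sum_{k\ge 1}J_k(x)$.

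I then apply the Birkhoff ergodic theorem on $(\M,\f,L)$. For each fixed integer $N\ge 1$, Birkhoff gives
$$
\frac{1}{n}\sum_{i=0}^{n-1}I_1(\f_i x)+\frac{2}{n}\sum_{k=1}^N\sum_{i=0}^{n-1-k}J_k(\f_i x)\xrightarrow[n\to\infty]{a.s.}E_L(I_1)+2\sum_{k=1}^N E_L(J_k).
$$
Since the omitted piece $R_{n,N}:=2\sum_{k=N+1}^{n-1}\sum_{i=0}^{n-1-k}J_k(\f_i x)$ is nonnegative, $\liminf_{n\to\infty}\Nt_n(x,0)/n\ge E_L(I_1)+2\sum_{k=1}^N E_L(J_k)$; letting $N\to\infty$ by monotone convergence yields $\liminf\Nt_n/n\ge E_L(I)$. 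For the matching upper bound I use $R_{n,N}\le 2\sum_{i=0}^{n-1}(K_\infty-K^N)(\f_i x)$ where $K^N:=\sum_{k=1}^N J_k$; when $E_L(I)<\infty$, so that $K_\infty\in L^1(L)$, Birkhoff gives $R_{n,N}/n\to 2(E_L(K_\infty)-E_L(K^N))$ almost surely, hence $\limsup\Nt_n/n\le E_L(I_1)+2E_L(K^N)+2(E_L(K_\infty)-E_L(K^N))=E_L(I)$. When $E_L(I)=+\infty$, the lower bound alone forces $\Nt_n/n\to+\infty$. The extension to non-integer $t$ is achieved by the monotonicity sandwich $\Nt_{\lfloor t\rfloor}\le \Nt_t\le \Nt_{\lceil t\rceil}$ combined with $\lfloor t\rfloor/t\to 1$.

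The main obstacle is the uniform control of the tail $R_{n,N}$ in the upper bound, which forces a careful order of limits: first $n\to\infty$, invoking Birkhoff on the $L^1(L)$ function $K_\infty-K^N$, then $N\to\infty$, invoking $E_L(K^N)\nearrow E_L(K_\infty)$ by monotone convergence. This is where the integrability $K_\infty\in L^1(L)$, equivalent to $E_L(I)<\infty$, plays its role; without it only the lower bound survives, but it already gives $\Nt_n/n\to+\infty$ as desired. The other delicate point, already handled in the first paragraph, is the per-square identification: combining \eqref{relationpi} with the cocycle identity must be done carefully so that the $G$-shift between the two pieces of trajectory reduces exactly to the time shift by $j-i$ encoded in $J_{j-i}$.
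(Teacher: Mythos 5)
Your decomposition into unit blocks $[i,i+1)\times[j,j+1)$, the reduction to $y=(x,0)$ via \eqref{relationpi} and the cocycle identity, the identification $J_{k,m}(x,a)=J_{0,m-k}(\f_k x,0)$, and the two-sided sandwich (truncated sum for the $\liminf$, full sum for the $\limsup$) followed by monotone convergence in $N$ are exactly the structure of the paper's proof; the way you split the upper bound into the cases $E_L(I)<\infty$ and $E_L(I)=\infty$ is a cosmetic rearrangement of the paper's direct application of Birkhoff's theorem to the nonnegative (possibly non-integrable) function $I$.

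There is, however, one concrete gap. You invoke ``the Birkhoff ergodic theorem on $(\M,\f,L)$'' to conclude
$$\frac{1}{n}\sum_{i=0}^{n-1}I_1(\f_i x)\xrightarrow[n\to\infty]{a.s.}E_L(I_1),\qquad \frac{1}{n}\sum_{i=0}^{n-1-k}J_k(\f_i x)\xrightarrow[n\to\infty]{a.s.}E_L(J_k),$$
but these are Birkhoff averages for the \emph{time-one map} $\f_1$, not for the flow. Ergodicity of the flow $\f$ does not imply ergodicity of $\f_1$; without it, the discrete Birkhoff theorem only yields convergence to the conditional expectation $E_L(\,\cdot\mid\mathcal{I}_{\f_1})$, which may be a nonconstant $\f_1$-invariant function, and your identification of the limiting constant breaks down. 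The paper circumvents this precisely by sandwiching the discrete sums $\sum_{k=0}^{\lfloor t\rfloor}J_{0,m}(\f_k x,0)$ between continuous flow-time averages $\int J_{0,m}(\f_s x,0)\,ds$ over intervals of nearly the same length, and then applies the flow version of the ergodic theorem, for which ergodicity of $\f$ suffices. Your proof needs either this sandwich (or an averaging over the starting phase $s\in[0,1)$), or an additional hypothesis/argument that $\f_1$ is ergodic.
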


\begin{cor}\label{corzdext}
Suppose the following :
\begin{enumerate}
\item $G:=\Z^d$ with $d \geq 1$
\item There is $M>0$ such that for almost all $x \in \M$ and all $k\in \N$,
$$
|\{(s,u) \in [0,1[\times [k,k+1[ : s\neq u, \pi_{\R}(\f^0_s(x,0))=\pi_{\R}(\f^0_u(x,0))\}|\leq M1_{\{|h_k(x)|\leq M\}}.
$$
\item the time spent in any neighborhood is bounded :
$$
\sum_{k\geq 0}L(|h_k|\leq M)< \infty.
$$ 
\end{enumerate}
Then $E_L(I)$ is finite.
\end{cor}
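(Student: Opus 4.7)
The plan is to split $I$ into the two summands visible in its definition and control each separately by invoking the two bounding hypotheses of the corollary. Write $I = I_1 + 2 I_2$, where
$$I_1(x) := \bigl|\{(s,u) \in [0,1[^2 : s\neq u,\ \pi_\R(\f^0_s(x,0)) = \pi_\R(\f^0_u(x,0))\}\bigr|,$$
$$I_2(x) := \bigl|\{(s,u) \in [0,1[ \times [1,+\infty[ : \pi_\R(\f^0_s(x,0)) = \pi_\R(\f^0_u(x,0))\}\bigr|.$$
Everything reduces to showing $E_L(I_1) < \infty$ and $E_L(I_2) < \infty$.

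For $I_1$, I would apply hypothesis (2) with $k=0$. The cocycle identity $h_0 = h_{0+0} = h_0 + h_0\circ \f_0$ forces $h_0 \equiv 0$, so the indicator $1_{\{|h_0(x)|\le M\}}$ is identically $1$, hence $I_1(x) \le M$ for $L$-almost every $x$. This already gives $E_L(I_1) \le M$.

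For $I_2$, I would slice the range $[1,+\infty[$ into the unit intervals $[k,k+1[$ with $k\ge 1$, so that
$$I_2(x) = \sum_{k\ge 1}\bigl|\{(s,u)\in [0,1[ \times [k,k+1[ : \pi_\R(\f^0_s(x,0)) = \pi_\R(\f^0_u(x,0))\}\bigr|.$$
Applying hypothesis (2) term by term gives the pointwise bound $I_2(x) \le M \sum_{k\ge 1} 1_{\{|h_k(x)|\le M\}}$, and Tonelli's theorem yields
$$E_L(I_2) \le M \sum_{k\ge 1} L\bigl(|h_k|\le M\bigr),$$
which is finite by hypothesis (3). Combining the two estimates gives $E_L(I) \le M + 2M \sum_{k\ge 1} L(|h_k|\le M) < \infty$, as required.

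There is no genuine obstacle in this argument: once the decomposition $I = I_1 + 2I_2$ is in place, hypotheses (2) and (3) are explicitly designed to handle the within-unit and between-unit contributions respectively. The only minor point that deserves a line of justification is that $h_0 = 0$ almost surely (so that the $k=0$ instance of hypothesis (2) is actually informative), which, as noted above, is immediate from the cocycle identity.
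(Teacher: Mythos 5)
The paper does not give a proof of this corollary; it is stated as a direct consequence of the hypotheses, with Theorem \ref{d3} proved immediately afterward. Your argument is correct and is precisely the natural one intended: the decomposition $I=I_1+2I_2$ matches the two terms in the definition of $I$ in Theorem \ref{d3}; the cocycle identity with $s=t=0$ gives $h_0\equiv 0$, so hypothesis (2) at $k=0$ bounds $I_1$ by $M$; slicing $[1,\infty)$ into unit intervals, applying hypothesis (2) for each $k\geq 1$, and then using Tonelli together with hypothesis (3) gives $E_L(I_2)\leq M\sum_{k\geq 1}L(|h_k|\leq M)<\infty$. Nothing is missing.
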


Corollary \ref{corzdext} applies for some chaotic suspension flow over a $\Z^d$-extensions $(\Mp,\tp,\mup)$ with step function $\phi$ when the latter is non recurrent.
with step function $\phi$,
 The recurrence of the system is then equivalent to the recurrence in $\Z^d$ of the Birkhoff sum $\left(\sum_{i=0}^n\phi \circ \tp^n\right)_n$\footnote{When this Birkhoff sum behave as a random walk on $\Z^d$, then it is transient
whenever $d\geq 3$.}.
An example of such systems on which corollary \ref{corzdext} applies is the geodesic flow over $\Z^d$-cover of negatively curved compact surfaces (see \cite{szasz-varju}).


\begin{proof}[Proof of theorem \ref{d3}]
We prove the theorem by approximating the quantity $\Nt_t$ by the sum of blocs describing the number of intersections for trajectories of length $1$, defined in the following way : for any $(x,a) \in \M_0$,
$$
J_{k,m}(x,a):=|\{(s,u) \in [m,m+1[\times [k,k+1[ : s\neq u, \pi_{\R}(\f^0_s(x,0))=\pi_{\R}(\f^0_u(x,0))\}|,
$$
 and for $t >0$,
\begin{align}
&\Nt_t(x,a) \leq \sum_{k,m=0}^{\lfloor t \rfloor}J_{k,m}(x,a)\\
& \leq \sum_{k=0}^{\lfloor t \rfloor}J_{k,k}(x,a)+2\sum_{0\leq k<m \leq \lfloor t \rfloor}J_{k,m}(x,a).  \label{ergosup}
\end{align}
On the other hand, for $N\in \N^*$ and $t \geq N$,
\begin{align}
&\Nt_t(x,a) \geq \sum_{k,m=0}^{\lfloor t \rfloor-1}J_{k,m}(x,a)\nonumber\\
& \geq \sum_{k=0}^{\lfloor t \rfloor-1}J_{k,k}+2\sum_{k=0}^{ \lfloor t \rfloor-N}\sum_{m=k+1}^{k+N}J_{k,m}(x,a). \label{birkergo}
\end{align}
Then relation \eqref{relationpi}, ensures that for $s,u \in \mathbb{R}$, and $k\in \N$, 
\begin{align*}
&\pi_{\R}(\f^0_s\circ \f^0_k(x,0))=\pi_{\R}(\f^0_u\f^0_k(x,0))\\
& \Rightarrow \pi_{\R}(\f_s\circ\f_k(x), h_k(x)+h_s\circ \f_k(x))=\pi_{\R}(\f_u\circ\f_k(x), h_k(x)+h_u\circ \f_k(x))\\
&\Rightarrow  \pi_{\R}(\f_s(\f_k(x),0))=\pi_{\R}(\f_u(\f_k(x), 0)).
\end{align*}

Thus for any $(x,a) \in \M_0$, and $m \geq k$
$$
J_{k,m}(x,a)=J_{0,m-k}(\f_k^0(x,a))=J_{0,m-k}(\f_k(x),0).
$$
Applying the above relation to sums \eqref{birkergo} and \eqref{ergosup}, then for $N\in \N^*$, $t\geq N$, and $(x,a)\in \M_0$,
\begin{align*}
&\frac 1 t\left( \sum_{k=0}^{\lfloor t \rfloor -1}J_{0,0}(\f_k(x),0)+2\sum_{k=0}^{ \lfloor t \rfloor-N}\sum_{m=1}^{N}J_{0,m}(\f_k(x),0)\right)\\
&\leq \frac{\Nt_t(x,a)}{t}\\
&\leq \frac 1 t \sum_{k=0}^{\lfloor t \rfloor}\left(J_{0,0}(\f_k(x),0)+2\sum_{m\geq 1}J_{0,m}(\f_k(x),0)\right).
\end{align*}

Notice that
$$
\int_0^{\lfloor t \rfloor +1}J_{0,m}(\f_t\circ \f_{-1}(x),0)\geq \sum_{k=0}^{\lfloor t \rfloor }J_{0,m}(\f_k(x),0) \geq \int_0^{\lfloor t \rfloor -1}J_{0,m}(\f_t(x),0).
$$
Since
$(\M,\f,L)$ is ergodic, 
 Birkhoff theorem applies : for any $N \in \N^*$, and $L_0$-a.e $(x,a)$,
$$
E_L(I_N)\leq \liminf_{t\rightarrow \infty} \frac{\Nt_t}{t}(x,a)\leq \limsup_{t\rightarrow \infty} \frac{\Nt_t}{t}(x,a) \leq E_L(I),
$$
where $I_N:=J_{0,0}(.,0)+2\sum_{m=1}^N J_{0,m}(.,0)$. Since\\ $I=J_{0,0}(.,0)+2\sum_{m=1}^N J_{0,m}(.,0)$, monotonous convergence applies 
$$
\lim_{N \rightarrow\infty}E_L(I_N)=E_L(I).
$$
Which concludes the theorem.
\end{proof}


\section{Self-intersections for finitely measured systems.}\label{secfinimes}

The following theorem is a slight adaptation of Lalley's study of the asymptotic behaviour of self-intersections from geodesic flows on compact negatively curved Riemannian surfaces (see theorem 1.1 from \cite{lalley}). In particular it allows us to state that the same behavior occurs for the self intersection of the flow on a Sinai Billiard.\\


Let introduce some lightened settings from the section \ref{main} which translate the notion of self-intersections in the case of probabilistic dynamical systems.

\begin{hyp}[Settings]\label{hypfini}
Let $(\M,\f_t,\nu)$ be a suspension flow over the ergodic probabilistic dynamical system $(\Mp,\tp,\mup)$ with roof function\\ $\tau : \Mp \mapsto \mathbb{R}$. Let $\R$ be some set and $\pi_{\R}: \M \rightarrow \R$ an associated function. As in section \ref{main} we define the number $\Nt_t(x)$ of self intersections for a trajectory starting from $x\in \M$ up to time $t\in \mathbb{R}_+$ as 
\begin{align}\label{eqautocon}
\Nt_t(y):=|\{(s,u)\in [0,t]^2 : s\neq u, \pi_{\R}(\f^0_s(y))=\pi_{\R}(\f^0_u(y))\}|.
\end{align}

identifying $\Mp$ with $\Mp\times\{0\}\subset \M$, we identify the number of self intersections of a trajectory starting from $x$ up to the $n^{th}$ reflection as 
\begin{align}\label{eqautodis}
\nu_n(x):= \sum_{k\geq 1} k \sum_{0 \leq i<j \leq n-1} 1_{V_k^{(T^j(x))}}\circ T^i(x)+\sum_{i=0}^{n-1}\nu_1(T^ix),
\end{align}

where $\nu_1:=\left|\{(s,t)\in [0,\tau(x)]^2 :\, 0\leq s<t\leq \tau(x),\,  s \neq t : \pi_\R(\f_s^0(x))=\pi_\R(\f_t^0(x))\}\right|$
and $V_k^{(x)}$ is the following subset of $\Mp$, 
$$
V_{k}^{(x)}:= \{ y \in \Mp, |[y] \cap [x]|=k\},
$$ 
with
$[x]:=\pi_\R(\f_{[0,\tau(x)]}(x))$.
Suppose that $(\Mp,\tp,\mup)$ and $\tau$ satisfy the following hypotheses :
\begin{itemize}

\item[(a)] Trajectories cross just finitely many times between two reflection :\\
 for all $x \in M$,
$$
\mu(\{ y \in M, |[x]\cap [y]|>D\})=0
$$
and for all $k \geq 1$,
$$
\mu(\{ y \in M, |[y]\cap [\tp^ky]|>D\})=0.
$$
In addition,
 $V_1$ satisfies $|\mathcal V_1|\leq D$ 

\item[(b)] There is some congruent family of partitions $(\xi_{-k}^k)_{k\in \N}$ of $\Mp$ and a real valued sequence $(\epsilon_k)_{k \in \N}$ converging to $0$ such that
for any fixed $k\in \N^*$ and any $A\in \xi_{-k}^k$, there is some subset $B_A\subset \Mp$ which can be written as a union of elements of the partition $\xi_{-k}^k$ and satisfies for any $x,y \in \Mp$ and any $m \in \N^*$,
$$
(V_m^{(x)})^{[k]}\triangle V_m^{(y)}\subset B_A
$$
and satisfying
$$
\mup(B_A) \leq \epsilon_k.
$$
\end{itemize} 
\end{hyp}
The suspension flow $(\M,\f_t,\nu)$ derived from the Sinai billiard or the geodesic flows over a compact negatively curved surface presented in section \ref{example} for example satisfy these settings and thus the following theorem.

\begin{thm}\label{interfin}
Let $(\M,\f_t,\nu)$ be a suspension flow over an ergodic dynamical system $(\Mp,\tp, \mup)$ with roof function $\tau$ satisfying the settings \ref{hypfini}.
Then the quantity $\nu_n$ defined in \eqref{eqautodis} satisfies
\begin{align}\label{eqlalleyresltdis}
\frac{1}{n^2}\nu_n \xrightarrow[n\rightarrow\infty]{\mup-p.s} e_I,
\end{align}
Where $e_I$ follows a similar definition as in lemma \ref{esp_bil}:
$$
e_I:=\int_{\Mp\times \Mp}|[y] \cap [x]|d\mup(y) d\mup(x). 
$$
Fix the following probability measure $L:=\frac{\nu}{E_{\mup}(\tau)}$, then the self-intersections $\Nt_t$ defined in \eqref{eqautocon}
 satisfies the following almost sur convergence on $(\M,\f_t,L)$ :
\begin{align}\label{eqlalleycon}
\frac{1}{t^2}\Nt_t \xrightarrow[t\rightarrow \infty]{L-p.s} e_I'.
\end{align}
Where $e_I'$ is defined similarly as in theorem \ref{grosthmcon} by :

\begin{align*}
e'_I :=\int_{\M \times \M}|\pi_\R(\f^0_{[0,1)}(x))\cap \pi_\R(\f^0_{[0,1)}(y))|dL(y) dL(x)\\
\end{align*}

\end{thm}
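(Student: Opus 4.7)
The plan is to establish the discrete statement \eqref{eqlalleyresltdis} first, by reducing $\nu_n$ to a finite linear combination of products of one-dimensional Birkhoff sums via the partition approximation of hypothesis (b) of Settings \ref{hypfini}, and then to transfer to the continuous statement \eqref{eqlalleycon} through the suspension flow change of clock, using $n_t(x)/t \to 1/E_{\mup}(\tau)$ given by Birkhoff applied to $\tau$.

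For the discrete part, write $\nu_n(x) = \sum_{0 \leq i < j \leq n-1} F(T^i x, T^j x) + \sum_{i=0}^{n-1} \nu_1(T^i x)$, where $F(y,z) := |[y] \cap [z]|$ is symmetric and bounded by $D$ by hypothesis (a), and the rightmost term is $\leq nD$ hence negligible upon dividing by $n^2$. The function $F$ is not continuous, so I cannot invoke ergodicity of $\tp \times \tp$ directly (which would in any case require weak mixing, not just ergodicity of $\tp$); the idea is to approximate $F$ by a finite linear combination of product functions, for which classical ergodicity of $\tp$ alone suffices. Fixing $k \in \N^*$ and choosing a reference point $x_A$ in each element $A \in \xi_{-k}^k$, set
$$F_k(y,z) := \sum_{A \in \xi_{-k}^k} \sum_{m=1}^{D} m \, 1_A(z) \, 1_{V_m^{(x_A)}}(y).$$
Hypothesis (b) yields the product-type bound $|F(y,z) - F_k(y,z)| \leq D \sum_{A} 1_A(z) 1_{B_A}(y)$ with $\sum_A \mup(A) \mup(B_A) \leq \epsilon_k$. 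Since each summand of $F_k$ factorizes as a function of $y$ times a function of $z$, Birkhoff's theorem for $\tp$ applied to each factor gives, $\mup$-a.s.,
$$\frac{1}{n^2} \sum_{0 \leq i, j \leq n-1} F_k(T^i x, T^j x) \xrightarrow[n \to \infty]{} \int F_k \, d\mup \otimes d\mup,$$
and the same factorization applied to the product bound on $|F - F_k|$ gives, $\mup$-a.s.,
$$\limsup_n \frac{1}{n^2} \sum_{0 \leq i, j \leq n-1} |F - F_k|(T^i x, T^j x) \leq D \epsilon_k.$$
Letting $n \to \infty$ and then $k \to \infty$, noting $\int F_k \to e_I$ and that the diagonal $\sum_{i=j} F(T^i x, T^i x) = O(n)$ contributes only to the boundary, yields \eqref{eqlalleyresltdis} (up to the combinatorial factor implicit in $\sum_{i<j}$ versus $\sum_{i,j}$, which is absorbed into the convention chosen for $e_I$).

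For the continuous part, the bracketing $\nu_{n_t(x)}(x) \leq \Nt_t(x,s) \leq \nu_{n_t(x)+1}(Tx) + O(n_t)$, identical in spirit to the argument surrounding \eqref{encnt} of the main body, combined with $n_t(x)/t \to 1/E_{\mup}(\tau)$ a.s.\ and \eqref{eqlalleyresltdis}, yields $\Nt_t/t^2 \to e_I/E_{\mup}(\tau)^2$ almost surely on $(\M,\f_t,L)$. The identification $e_I/E_{\mup}(\tau)^2 = e_I'$ then proceeds via a Fubini-type computation along the lines of Lemma \ref{modz} in the main body: one unfolds $e_I'$ through the suspension representation using $dL = d\mup \otimes d\lambda / E_{\mup}(\tau)$, and the flow-invariance of the intersection counts reassembles the expression into $e_I/E_{\mup}(\tau)^2$.

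The principal obstacle is the absence of any mixing hypothesis in Settings \ref{hypfini}: only ergodicity of $\tp$ is assumed. Normally, convergence of $\frac{1}{n^2} \sum_{i,j} G(T^i x, T^j x)$ to $\int G \, d\mup \otimes d\mup$ requires ergodicity of $\tp \times \tp$, equivalent to weak mixing of $\tp$. The product structure of both $F_k$ and of the error bound furnished by hypothesis (b) is precisely what permits the argument to proceed with ergodicity alone, by replacing a two-dimensional ergodic theorem with a pair of independent one-dimensional Birkhoff sums. One must also handle with some care the $i<j$ restriction versus $i,j$ unrestricted, the diagonal $i=j$, and the passage from discrete to continuous time via $n_t$.
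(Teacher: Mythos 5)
Your proof is correct and follows the same route as the paper's: the paper packages the partition-based product approximation together with one-dimensional Birkhoff averaging as a standalone Proposition~\ref{Lalley} (sandwiching $F$ between $\sum_{A,B}1_A\otimes 1_B\inf_{A\times B}F$ and the corresponding $\sup$, rather than your reference-point functions $F_k$, but the two formulations are interchangeable), then derives the discrete statement from hypothesis (b) and passes to continuous time by bracketing $\Nt_t$ between $\nu_{n_t}$ and $\nu_{n_t+2}$ and invoking $n_t/t\to 1/E_{\mup}(\tau)$, exactly as you do. Your identification $e_I'=e_I/E_{\mup}(\tau)^2$ is the correct scaling, consistent with Lemma~\ref{modz} of the main text.
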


Lalley proved similar result to \eqref{eqlalleycon} in the case of the geodesic over a negatively curved compact manifold in \cite{lalley} using proposition 2.10. However this proposition does not apply on measurably bounded function\\ $f:\Mp\times \Mp \rightarrow \mathbb{R}$ \footnote{For example, the 
function $f(x,y):=1_{\{\bigcup_{n \in \Z}T^ny\}}(x)$ satisfies\\ $\frac 1 {n^2}\sum_{i,j=0}^nf(T^ix,T^jx)=1$. } as the one counting 
the number of intersections. So we use the following proposition instead which would adapt Lalley's when $f$ counts the number of intersections.



\begin{prop}\label{Lalley}
Let $(\Mp,\tp,\mup)$ be a probabilistic ergodic dynamical system and let $(\A_n)_{n \in \N}$ be a sequence of essential partitions of $\Mp$. Suppose the function
$F: \Mp \times \Mp \rightarrow \mathbb R$ satisfies the following relation

\begin{align}\label{eqpartcong}
\sum_{A,B \in \A_n}\mup(A)\mup(B) \left(\sup_{A\times B}F - \inf_{A \times B}F\right) \xrightarrow[n\rightarrow\infty]{} 0,
\end{align}

then
$$
\frac{1}{n^2}\sum_{0\leq i,j \leq n-1}F\left(\tp^ix,\tp^jx\right) \xrightarrow[n\rightarrow\infty]{a.e} \int_{\Mp \times \Mp}F(x,y)d\mup(x)d\mup(y).
$$
\end{prop}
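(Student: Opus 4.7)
The plan is to prove this by a Riemann-sum sandwich combined with the classical Birkhoff ergodic theorem applied coordinate-wise. For each $n$, introduce the upper and lower step-function approximations
\[
\overline F_n := \sum_{A,B \in \A_n} \Bigl(\sup_{A\times B} F\Bigr)\, 1_A\otimes 1_B, \qquad \underline F_n := \sum_{A,B \in \A_n} \Bigl(\inf_{A \times B} F\Bigr)\, 1_A \otimes 1_B,
\]
which bracket $F$ everywhere, and write $\overline S_n := \sum_{A,B} \mup(A)\mup(B)\sup_{A\times B}F$ and $\underline S_n$ for the corresponding sum with $\inf$ in place of $\sup$.

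First, I would apply Birkhoff's ergodic theorem to each indicator $1_A$, $A \in \A_n$: since the partition $\A_n$ is at most countable, on a single full-measure subset of $\Mp$ one has $\mu_{m,x}(A) := \frac{1}{m} \sum_{i=0}^{m-1} 1_A\circ\tp^i(x) \to \mup(A)$ simultaneously for every $A \in \A_n$. The product structure of $\overline F_n$ then gives, pointwise a.e.,
\[
\frac{1}{m^2}\sum_{i,j=0}^{m-1} \overline F_n(\tp^i x, \tp^j x) = \sum_{A,B\in \A_n}\bigl(\sup_{A\times B}F\bigr)\,\mu_{m,x}(A)\,\mu_{m,x}(B) \xrightarrow[m\to\infty]{} \overline S_n,
\]
and analogously $\frac{1}{m^2}\sum_{i,j} \underline F_n(\tp^ix,\tp^jx) \to \underline S_n$.

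Second, sandwiching $F$ between $\underline F_n$ and $\overline F_n$ yields, for a.e.\ $x$,
\[
\underline S_n \leq \liminf_{m\to\infty}\frac{1}{m^2}\sum_{i,j=0}^{m-1}F(\tp^ix,\tp^jx) \leq \limsup_{m\to\infty}\frac{1}{m^2}\sum_{i,j=0}^{m-1}F(\tp^ix,\tp^jx)\leq \overline S_n.
\]
Hypothesis \eqref{eqpartcong} states exactly that $\overline S_n - \underline S_n\to 0$ as $n\to\infty$, so the liminf and the limsup coincide almost surely. Since $\underline S_n \leq \int F\, d(\mup\otimes\mup) \leq \overline S_n$ (which is guaranteed as soon as $F$ is $\mup\otimes\mup$-integrable, in particular bounded as in the applications), letting $n\to\infty$ identifies the common value of the Cesàro limits with $\int F\, d(\mup\otimes\mup)$.

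The main obstacle is the exchange of the two limits in $n$ and $m$: for each fixed $n$, Birkhoff provides an a.e. statement with an exceptional null set $N_n$ depending on $n$, so the a.e.\ conclusion of the sandwich requires $\bigcup_n N_n$ to remain null. This is automatic once each $\A_n$ is (at most) countable, as in the applications. A secondary issue, identifying the common limit of $\overline S_n$ and $\underline S_n$ with $\int F\, d(\mup\otimes\mup)$, is immediate here because the intended integrand $|[x]\cap [y]|$ is uniformly bounded by the constant $D$ of hypothesis~(a), but in general one would need an additional uniform-integrability type hypothesis to handle unbounded $F$.
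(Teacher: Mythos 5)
Your proof is correct and follows the same core strategy as the paper's (sandwiching $F$ between the step approximations $\underline F_n$ and $\overline F_n$, applying the Birkhoff ergodic theorem to the ergodic averages of indicators, and then letting the mesh of the partition shrink via hypothesis~\eqref{eqpartcong}). The notable difference is one of rigor rather than method: the paper's proof uses the single symbol $n$ simultaneously for the partition index and the number of iterates, which makes the appeal to Birkhoff look suspicious, since the sets $A\in\A_n$ change with the very index along which the ergodic average is being taken. Your version resolves that ambiguity cleanly by separating the two parameters $n$ (mesh) and $m$ (time horizon), first fixing $n$ and applying Birkhoff as $m\to\infty$ along the countable family $\{1_A : A\in\A_n\}$, then sending $n\to\infty$. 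You also correctly flag the two points the paper leaves implicit: the exceptional null set must be controlled over all $n$, which is automatic since each $\A_n$ is essentially countable in a probability space, and the identification of the common limit of $\underline S_n$ and $\overline S_n$ with $\int F\,d(\mup\otimes\mup)$ requires $F$ to be $\mup\otimes\mup$-integrable (bounded in the paper's application), a hypothesis the proposition does not state explicitly. Neither observation changes the substance of the argument, but both are genuine gaps in how the paper wrote it down, and making them explicit strengthens the presentation.
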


\begin{proof}
For any $x,y \in \Mp$,
$$
\sum_{A,B \in \A_n}1_A(x)1_B(y) \inf_{A\times B}F\leq F(x,y) \leq \sum_{A,B \in \A_n}1_A(x)1_B(y) \sup_{A\times B}F.
$$
Thus summing along every couple $(T^ix,T^jx)$ for $0\leq i,j\leq n-1$, 
\begin{align*}
&\sum_{A,B \in \A_n} \inf_{A\times B}F \left( \frac 1 n \sum_{k=0}^n1_A(T^ix) \right)\left( \frac 1 n \sum_{k=0}^n1_B(T^ix) \right) \\
&\leq \frac{1}{n^2}\sum_{0\leq i,j \leq n-1}F(\tp^ix,\tp^jx) \leq \sum_{A,B \in P_n}\left( \frac 1 n \sum_{k=0}^n1_A(T^ix) \right)\left( \frac 1 n \sum_{k=0}^n1_B(T^ix) \right) \sup_{A\times B}F.
\end{align*}
Since $(\Mp,\tp,\mup)$ is ergodic, Birkhoff theorem then gives the following limit for almost any $x\in \Mp$ :
$$
\sum_{A,B \in \A_n} \inf_{A\times B}F \mup(A) \mup(B) \leq \overline{\underline{\lim}}\frac{1}{n^2}\sum_{0\leq i,j \leq n-1}F(\tp^ix,\tp^jx) \leq \sum_{A,B \in \A_n} \sup_{A\times B}F \mup(A) \mup(B).
$$
The assumption \eqref{eqpartcong} then leads to the conclusion,
$$
\frac{1}{n^2}\sum_{0\leq i,j \leq n-1}F(\tp^ix,\tp^jx) \overset{\mup-p.s}{\rightarrow} \int F d\mup d\mup.
$$

\end{proof}

\begin{proof}[Proof of theorem \ref{interfin}]
First let's prove \eqref{eqlalleyresltdis}, the quantity $\nu_n$ introduced in \eqref{eqautodis} can be written, for any $x\in \Mp$ and any $n \in \N$, as
$$
\nu_n(x)=\sum_{0\leq i,j\leq n-1}f(\tp^i(x),\tp^j(x))+\sum_{i=0}^{n-1}\nu_1(T^ix),
$$
where $f(x,y):=\sum_{k\geq 1}k1_{V_k^{(x)}}(y)$ 

Let $\A_n:=\xi_{-n}^{n}$, then for any $A,B \in \A_n$, $f$ is constant on $A \times B$ whenever 
$B \not\subset B_A$. When $B\subset B_A$ then the setting (a) tell us that $f$ is bounded bys some constant $D$.
Thus,
$$
\sum_{A,B \in \A_n}\mup(A)\mup(B) \left(\sup_{A\times B}f - \inf_{A \times B}f \right)\leq D\sum_{A \in \Pa_m}\mup(A)\mup(B_A).
$$

Hypothesis (b) then ensures that the right hand side converge to $0$ when $n$ goes to infinity.
Thus proposition \ref{Lalley} applies on $f$ for the partition sequence $(\A_n)_{n\in \N}$ :
$$
\frac{1}{n^2}\nu_n \xrightarrow[n\rightarrow\infty]{\mup-a.s}\int_{\Mp\times \Mp}f(x,y)d\mup d\mup.
$$
This last integral corresponds to the term $e_I$.

To prove \eqref{eqlalleycon}, we reintroduce the number $n_t(x)$ of reflection on the Poincaré section $\Mp$ up to some time $t \in \mathbb{R}$ for a flow starting at $x\in \M$ defined as
$$
n_t(x):=\sup\{n, \sum_{k=0}^{n-1}\tau\circ \tp^k(x)\leq t\}.
$$
For the same reason as in \eqref{birk} page \pageref{birk}, the following convergence stands :
\begin{align}\label{eqfininbcol}
\lim_{t \rightarrow \infty} \frac{t} {n_t}=\lim_{t \rightarrow \infty} \frac{1}{n_t}\sum_{k=0}^{n_t}\tau \circ \tp^k \overset{\mup-a.s}{ =} E_{\mup}(\tau).
\end{align}
In addition, $\nu_{n_t}$ gives the following upper and lower bounds of $\Nt_t$ for any $x\in \Mp$ and $s\in [0,\tau(x))$ : 
$$
\frac{1}{t^2}(\nu_{n_t}(x)-D)\leq \frac{\Nt_t(x,s)}{t^2} \leq \frac{1}{t^2}\nu_{n_t+2}(x).
$$
Thus, \eqref{eqfininbcol} and the above inequality lead to the following convergence,\\

$$
\frac{\Nt_t}{t^2}\overset{L-a.s}{\underset{t\rightarrow \infty}\rightarrow} e_I'=\frac{e_I}{E_{\mup}(\tau)}.
$$

\end{proof}


%
%
%
%

\section{Technical lemma}\label{tecnic}
Let's prove  that
\begin{align*}
\sum_{0 \leq i,j,k,l \leq n-1}&E_{\mup} \left(\left(f\circ \tp^{j-i}1_{S_{j-i=N}}g\right)\circ \tp^i \left(g'f'\circ \tp^{l-k}
1_{S_{l-k=N'}}\right)\circ \tp^k\right)\\
&=\sum_{0 \leq i,j,k,l \leq n-1}a_{i,j,k,l}\mup(g)\mup(f)\mup(g')\mup(f')+ o(n^3)
\end{align*}
when $0\leq i \leq j \leq k \leq l \leq n-1$ and then when $0\leq i \leq k \leq l \leq j \leq n-1$.

\underline{first case} : $0\leq i \leq j \leq k \leq l$\\


Suppose first $\min(j-i,k-j,l-k)\geq 2k_n+1$,  and apply the invariance of $\mup$ by $\tp$.
\begin{align}\label{prince}
&E_{\mup} \left(\left(f\circ \tp^{j-i}1_{S_{j-i=N}}g\right)\circ \tp^i \left(g'f'\circ \tp^{l-k}
1_{S_{l-k=N'}}\right)\circ \tp^k\right) \\
&=\sum_{|r| \leq (k-j+1)D}E_{\mup} \left(\left(f\circ \tp^{j-i}1_{S_{j-i=N}}g\right)1_{S_{k-j}=r}\circ \tp^{j-i} \left(g'f'\circ \tp^{l-k}1_{S_{l-k=N'}}\right)\circ \tp^{k-i}\right) \nonumber \\
&=\sum_{|r| \leq (k-j+1)D}E_{\mup} \left( 1_{S_{j-i=N}}g \left( f1_{S_{k-j}=r}\left( g'f'\circ \tp^{l-k}1_{S_{l-k=N'}}\right)\circ \tp^{k-j}\right)\circ \tp^{j-i} \right). \nonumber \\
\end{align}
Since $\|S_n\|_{\infty}\leq nD$, the sum is done $r \leq(k-j+1)D$. Applying several times hypothesis (e) as in the proof of lemma \ref{decorgr}, we obtain

\begin{align*}
E_{\mup}& \left(1_{S_{j-i=N}}g \left( f1_{S_{k-j}=r}\left( g'f'\circ \tp^{l-k}1_{S_{l-k=N'}} \right)\circ \tp^{k-j}\right)\circ \tp^{j-i} \right)\\ 
&=\frac{e^{-N^2/(2\Sigma(j-i-2k_n))}\mup(g)\mup(f_{A_r})}{(2\pi \Sigma)^{1/2}(j-i-2k_n)^{1/2}}\pm \frac{k_n\mup(f_{A_r})^{1/p}}{(j-i-2k_n)},
\end{align*}
with $f_{A_r}:=f1_{S_{k-j}=r}(g'f'\circ \tp^{l-k}1_{S_{l-k=N'}})\circ \tp^{k-j}$. Then

\begin{align*}
\mup(f_{A_r})&=\mup(f)\mup(f_{B_r})\frac{e^{-r^2/(2\Sigma(k-j-2k_n))}}{(2\pi\Sigma)^{1/2}(k-j-2k_n)^{1/2}}\pm \frac{ck_n\mup(f_{B_r})^{1/p}}{k-j-2k_n}\\
&\leq \frac{(1+ck_n)\mup(f_{B_r})^{1/p}}{(k-j-2k_n)^{1/2}}.
\end{align*}

With $f_{B_r}:=g'f'\circ \tp^{l-k}1_{S_{l-k=N'}}$. And finally

\begin{align*}
\mup(f_{B_r})&=\frac{\mup(f')\mup(g')e^{-N'^2/((2\Sigma)(l-k-2k_n))}}{(2\pi\Sigma)^{1/2}(l-k-2k_n)^{1/2}} \pm \frac{ck_n\mup(f')^{1/p}}{l-k-2k_n}\\
 &\leq \frac{1+ck_n}{(l-k-2k_n)^{1/2}}.
\end{align*}

injecting these upper bounds in \eqref{prince}

\begin{align*}
E_{\mup}& (1_{S_{j-i=N}}g(f1_{S_{k-j}=r}(g'f'\circ \tp^{l-k}1_{S_{l-k=N'}})\circ \tp^{k-j})\circ \tp^{j-i} ) \\ 
=&\frac{e^{-N^2/((2\Sigma(j-i-2k_n)}\mup(g)}{(2\pi \Sigma)^{1/2}(j-i-2k_n)}\mup(f)\left(\frac{\mup(f')\mup(g')e^{-N'^2/(2\Sigma((l-k-2k_n))}}{(2\pi\Sigma)^{1/2}(l-k-2k_n)^{1/2}} \right)\frac{e^{-r^2/(2\Sigma(k-j-2k_n))}}{(2\pi\Sigma)^{1/2}(k-j-2k_n)^{1/2}} \\
&\pm \frac{e^{-N^2/(2\Sigma(j-i-2k_n))}\mup(g)}{(2\pi \Sigma)^{1/2}(j-i-2k_n)}\mup(f)\frac{ck_n\mup(f')^{1/p}}{l-k-2k_n}\frac{e^{-r^2/(2\Sigma(k-j-2k_n))}}{(2\pi\Sigma)^{1/2}(k-j-2k_n)^{1/2}} \\
&\pm \frac{e^{-N^2/(2\Sigma(j-i-2k_n))}\mup(g)}{(2\pi \Sigma)^{1/2}(j-i-2k_n)}\left(\frac{1+ck_n}{(l-k-2k_n)^{1/2}}\right)^{1/p}\frac{ck_n}{k-j-2k_n}  \\
&\pm \left(\frac{1+ck_n}{(l-k-2k_n)^{1/2}}\mup(f)\frac{e^{-r^2/(2\Sigma(k-j-2k_n))}}{(2\pi\Sigma)^{1/2}(k-j-2k_n)^{1/2}}\right)^{1/p}\frac{k_n}{(j-i-2k_n)} \\
&\pm \left(\Big(\frac{1+ck_n}{(l-k-2k_n)^{1/2}}\Big)^{1/p}\frac{ck_n}{k-j-2k_n}\right)^{1/p}\frac{k_n}{(j-i-2k_n)}.
\end{align*}

Treating each term through comparison series/integral we obtain an upper bound in $O(n^3)$.\\

%

Denote $u:=j-i$, $v:=k-j$ and $w:=l-k$ and suppose one of them exactly is lower than $2k_n+1$. Suppose without loss of generality $u \geq 2k_n+1$ and $w \leq 2k_n+1$. Then applying hypothesis (e) on $j-i$, 

\begin{align*}
|E_{\mup} ((&f\circ \tp^{j-i}1_{S_{j-i=N}}g)\circ \tp^i (g'f'\circ \tp^{l-k}
1_{S_{l-k=N'}})\circ \tp^k)| \\
&\leq  |E_{\mup} (f\circ \tp^{j-i}1_{S_{j-i=N}}g)|\\
&\leq  \frac{e^{-N^2/((2\Sigma)^{1/2}j-i-2k_n)}\mup(g)\mup(f)}{(2\pi \Sigma)^{1/2}(j-i-2k_n)^{1/2}}
+ \frac{k_n\mup(f)^{1/p}}{(j-i-2k_n)} \\
&\leq  \frac{1}{(2\pi \Sigma)^{1/2}(u-2k_n)^{1/2}}
+ c\frac{k_n}{(u-2k_n)}. 
\end{align*}
Then summing over all the permitted indices give an upper bound in $o(n^3)$.

\underline{third case} : $0\leq i \leq k \leq l \leq j$\\

This case is done exactly the same way as the case done in the proof of lemma \ref{decorgr} once the following decomposition made :

\begin{align}\label{bleu}
&E_{\mup} \left(\left(f\circ \tp^{j-i}1_{S_{j-i}=N}g \right)\circ \tp^i \left(g'f'\circ \tp^{l-k}
1_{S_{l-k}=N'}\right)\circ \tp^k \right)\nonumber \\
&=\sum_{r \leq \min(k-i,l-k,j-l)D} E_{\mup} \left(1_{S_{k-i}=r}g \left(g'f'\circ \tp^{l-k}
1_{S_{j-l}=N-N'-r}\circ \tp^{l-k}f\circ \tp^{j-k} 1_{S_{l-k}=N'}\right)\circ \tp^{k-i}\right)\nonumber \\
&=\sum_{r \leq \min(k-i,l-k,j-l)D}\left(\mup(g)\mup(A_r)\frac{e^{-r^2/(2\Sigma(k-i-2k_n))}}{(2\pi \Sigma)^{1/2}(k-i-2k_n)^{1/2}} \pm \frac{ck_n\mup(A_r)^{1/p}}{k-i-2k_n} \right)
\end{align}
with $A_r:=g'(f'1_{S_{j-l}=N-N'-r})\circ \tp^{l-k}1_{S_{l-k}=N'}f\circ \tp^{j-k} $. Then

\begin{align*}
\mup(A_r)&=\mup \left(g'1_{S_{l-k}=N'} \left(f'f\circ \tp^{j-l} 1_{S_{j-l}=N-N'-r}\right)\circ \tp^{l-k}\right)\\
&=\mup(g')\mup(B_r)\frac{e^{-N'^2/(2\Sigma(l-k-2k_n))}}{(2\pi \Sigma)^{1/2}(l-k-2k_n)^{1/2}}\pm \frac{ck_n\mup(B_r)^{1/p}}{l-k-2k_n}\\
&\leq \frac{(1+ck_n)\mup(B_r)^{1/p}}{(l-k-2k_n)^{1/2}}
\end{align*}
with $B_r=f'f\circ \tp^{j-l} 1_{S_{j-l}=N-N'-r}$ satisfying 

\begin{align*}
\mup(B_r)&=\frac{\mup(f')\mup(f)e^{-(N-N'-r)^2/(2\Sigma(j-l-2k_n))}}{(2\pi \Sigma)^{1/2}(j-l-2k_n)^{1/2}} \pm \frac{ck_n\mup(f)^{1/p}}{j-l-2k_n}\\
 &\leq \frac{1+ck_n}{(j-l-2k_n)^{1/2}}.
\end{align*}
Introducing these comparisons in \eqref{bleu} and making comparisons series/integral, we obtain an upper bound in $o(n^3)$.

\section{Enhanced invariance principle.}\label{secinvprinc}

We recall here a short reminder on the metric $J_1$ on the Skorohod space $D[0,T)$ (i.e. the set of right continuous and left limited functions on $[0,T)$ for $T \in \mathbb{R}_+^*$ ). Much more details on this metric may be found in the books \cite{billingsley2} or \cite{whitt}. 

\begin{defn}
Let $T>0$, the $J_1$ metric on the Skorohod space $D([0,T])$ is 
for all $x,y \in D([0,T])$,

$$
d_J(x,y):=\inf_{\lambda \in \Lambda} \{\|x\circ \lambda -y\|_{\infty}\vee \|\lambda - Id_{[0,T]}\|_{\infty} \},
$$
where $\Lambda$ is the set of homeomorphisms on $[0,T]$.
\end{defn}

In this section of the appendix, we prove that a probabilistic dynamical system $(\Mp,\tp,\mup)$ and a measurable function $\phi : \Mp\mapsto \mathbb{Z}$ with variance $\Sigma$ (and $S_n:=\sum \phi \circ \tp^i$ its Birkhoff sum), verifying the hypotheses (RW1) and (RW2) from proposition 2.1  of \cite{plantard-dombry} satisfy an enhanced invariance principle similar to the one stated in Definition 3.13 in \cite{TA14}.

\begin{prop}
Let $S>0$, and $\phi : \Mp\mapsto \mathbb{Z}$ a measurable function defined on the probabilistic dynamical system $(\Mp,\tp,\mup)$.  Denote by $N_n$ its associated local time defined by $N_{n}(a):=\sum_{k=0}^{n-1}1_{S_k=a}$ for any $a\in \mathbb{Z}$.  Suppose that $\phi$ satisfies the following hypotheses
\begin{itemize}
\item[(RW1)]$\left(n^{-1/2}S_{\lfloor nt \rfloor}\phi\right)_{t\geq 0}$ converges in $D([0, S])$
to some process $(B_t)_{t\geq 0}$ admitting a local time $(L(t, x))_{t\geq 0,x\in \mathbb R}$.
\item[(RW2)]
There is some $p \geq 1$ such that for all $M > 0$:
\begin{align*}
&\lim_{\delta \rightarrow 0} \limsup_{n \rightarrow \infty}\int_{[-M,M]} E|N_{\lfloor nt \rfloor} (\lfloor n^{1/2}a\rfloor) - N_{\lfloor nt \rfloor} (\lfloor n^{1/2}\delta \lfloor \delta^{-1}a\rfloor \rfloor)|^p da = 0.
\end{align*}
\item[(RW3)] $\phi$ is bounded by some constant $D>0$.
\end{itemize} 

Then the couple $(n^{-1/2}S_{\lfloor nt\rfloor},\frac{1}{n^{1/2}}N_{\lfloor nt \rfloor}(n^{1/2}.))_{t\in [0,S]}$ converges in law toward some couple $(B_t,L_t(.))_{t\in [0,S]}$ in the space $D([0,T])\times D([0,T],L^p(\mathbb{R}))$ where $D([0,T],L^p(\mathbb{R}))$ is the space of right continuous with left limit processes with values in $(L^p,\|.\|_p)$.
\end{prop}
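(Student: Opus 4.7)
The plan is to derive the joint process-level convergence by combining the finite-dimensional distribution convergence already furnished by Proposition~\ref{propplantar} with a tightness argument in the product Skorohod space $D([0,S]) \times D([0,S], L^p(\mathbb{R}))$.

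\emph{Step 1: joint finite-dimensional convergence.} By (RW1), $X_n(t) := n^{-1/2} S_{\lfloor nt \rfloor}$ converges in $D([0,S])$ to the almost surely continuous process $(B_t)_t$, so Skorohod's representation theorem lets me work on a probability space where this convergence is uniform and almost sure. I then check that (RW1)--(RW3) imply the three hypotheses of Proposition~\ref{propplantar}: assumption (1) is (RW1); assumption (3) follows from (RW2) with $p=2$; assumption (2) amounts to $\|n^{-1/2}N_n(\lfloor n^{1/2}a\rfloor)\|_{L^2} = O(1)$ uniformly in $a$, which follows as in the proof of Corollary~\ref{corfdd} from the local-limit bound $\mup(S_k = a) = O(k^{-1/2})$. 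Proposition~\ref{propplantar} then gives finite-dimensional convergence of $\frac{1}{n^{1/2}} N_{\lfloor n\cdot \rfloor}(n^{1/2}\cdot)$ to $L_\cdot(\cdot)$ in $L^p(\mathbb{R})$; since the local time is a measurable functional of the sample path, the Skorohod coupling promotes this to joint finite-dimensional convergence of the pair to $(B, L)$.

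\emph{Step 2: tightness of the local-time process.} For each fixed $a$ the map $t \mapsto N_{\lfloor nt \rfloor}(a)$ is non-decreasing in $t$, and (RW3) combined with $|S_k| \leq kD$ confines the support of $\frac{1}{n^{1/2}} N_{\lfloor nt \rfloor}(n^{1/2}\cdot)$ to a bounded interval of size $O(tD)$ with uniformly bounded $L^p$-mass. The limit process $t \mapsto L_t(\cdot)$ is almost surely continuous as an $L^p(\mathbb{R})$-valued process. This structural pair (monotone pre-limit, continuous limit) is exactly what Lemma~\ref{lemdini} exploits: the finite-dimensional convergence of Step~1 is upgraded to uniform convergence in $t$ of the $L^p$-valued processes, which is in particular tightness in $D([0,S], L^p(\mathbb{R}))$.

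\emph{Step 3: conclusion and main obstacle.} Tightness of the first marginal is (RW1), of the second is Step~2, so the pair is tight in the product Skorohod space; the joint finite-dimensional convergence of Step~1 then identifies the unique limit as $(B, L)$. The principal obstacle lies in Step~2, since tightness criteria in an $L^p(\mathbb{R})$-valued Skorohod space are subtler than in the real-valued case; the saving structural feature is the monotonicity of $t \mapsto N_{\lfloor nt \rfloor}(a)$ at every level $a$ combined with the spatial regularity of (RW2), which together are precisely what allows Dini's second theorem to bootstrap finite-dimensional convergence to full process-level convergence.
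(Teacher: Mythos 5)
Your Step~2 (monotonicity plus Lemma~\ref{lemdini} gives tightness and uniform convergence) and Step~3 are correct and match the paper's argument. The gap is in Step~1, specifically the sentence ``since the local time is a measurable functional of the sample path, the Skorohod coupling promotes this to joint finite-dimensional convergence of the pair to $(B,L)$.'' This does not follow. The Skorohod representation gives a probability space on which $X_n := n^{-1/2}S_{\lfloor n\cdot\rfloor}\to B$ almost surely. On that space the discrete local time $Y_n := n^{-1/2}N_{\lfloor n\cdot\rfloor}(n^{1/2}\cdot)$ is indeed a deterministic function of $X_n$, but the (renormalized) local-time functional is not continuous on $D([0,S])$, so almost sure uniform convergence of the paths does not force almost sure (nor in-probability) convergence of $Y_n$ to the local time $L$ of the limiting path $B$. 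Proposition~\ref{propplantar} tells you the \emph{marginal} law of $Y_n$ converges to the law of a Brownian local time, but marginal distributional convergence of $Y_n$ together with almost sure convergence of $X_n$ does not identify the joint limit as $(B,L)$ with $L$ the local time of that particular $B$ rather than of an independent copy.

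The paper closes exactly this gap in Lemma~\ref{lemconvweaktop}. The bridge is the \emph{occupation-time} functional $R_t : Y \mapsto \int_0^t 1_{[a,b)}(Y(s))\,ds$, which \emph{is} continuous at Brownian sample paths; applying the continuous mapping theorem to $(X_n, R_{t_1}(X_n),\dots,R_{t_r}(X_n))$ gives joint convergence to $(B, R_{t_1}(B),\dots,R_{t_r}(B))$, and $R_{t}(B)=\int_a^b L_t(x)\,dx$ by the occupation density formula. Since $R_t(X_n)$ agrees with $\int_a^b Y_n(t,x)\,dx$ up to vanishing error (using (RW3)), one obtains joint convergence of $(X_n,Y_n)$ to $(B,L)$ in the weak $\Pi^*$-topology (cylinders of the form $\int_a^b \cdot$). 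The marginal tightness of $Y_n$ in $L^p$ supplied by Proposition~\ref{propplantar} then identifies the strong-topology limit with the $\Pi^*$-limit. You should insert this intermediate continuous functional argument in place of the hand-wave; without it your joint f.d.d.\ convergence is not established, and the subsequent Dini/tightness step, while correct, rests on an unproven premise.
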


\begin{proof}
The path chosen for the proof is closed to the following classical scheme : first we prove that $(S_{\lfloor nt\rfloor},\frac{1}{n^{1/2}}N_{\lfloor nt \rfloor}(.))_{t\in [0,S]}$ converges to $(B_t,L_t(.))_{t\in [0,S]}$ in a topology (the $\Pi^*$-topology) and then prove the tightness of sequence for the stronger topology.\\ 
We define first $\mathcal{O}$ the weakest topology such that for any compact interval $I\subset \mathbb{R}$ the map $\Pi_I: (L^p,\|.\|_p)\mapsto \mathbb{R}$ defined as follows is continuous :
\begin{align*}
\Pi_I(L)=\int_IL(x)dx.
\end{align*}
The $\Pi^*$ topology is then the topology defined on $C^0([0,T])\times C^0([0,T],L^p(\mathbb{R}))$ in the sense that it is the weakest topology such that for any $t_1,\dots,t_r \in [0,S]$, the projection\\
$\pi_{t_1,\dots,t_n} :C^0([0,T])\times C^0([0,T],L^p(\mathbb{R})) \mapsto \mathbb{R}^r \times (L^p(\mathbb{R})^r, \mathcal{O}^r)$  defined by $\pi_{t_1,\dots,t_r}(f,g)=(f(t_1),\dots, f(t_r), g(t_1),\dots,g(t_r))$ is continuous (in other words a topology generated by cylinders). On the other hand the topology laid by $\mathbb{R}^r \times (L^p(\mathbb{R})^r, \|.\|_p)$ will be called the "strong topology".\\

\begin{lem}\label{lemconvweaktop}
For any $t_1,\dots,t_r$, and any interval $I=[a,b]\subset \mathbb{R}$,\\
$
(\frac 1 {n^{1/2}}S_{\lfloor nt_1\rfloor},\dots,\frac 1 {n^{1/2}}S_{\lfloor nt_r\rfloor},\int_{n^{1/2}a}^{n^{1/2}b} \frac 1 {n^{1/2}}N_{\lfloor nt_1\rfloor}(x)dx,\dots,\int_{n^{1/2}a}^{n^{1/2}b} \frac 1 {n^{1/2}}N_{\lfloor nt_r\rfloor}(x)dx)$
converges in law to \\
$(B_{t_1},\dots,B_{t_r},\int_{a}^{b}L_{t_1}(x)dx,\dots,\int_{a}^{b}L_{t_r}(x)dx)$.
 \end{lem}
 
\begin{proof}
Given an interval $I=[a,b)$ the map $D([0,S])\rightarrow \mathbb{R}$ given by $R_t: Y\mapsto \Lambda_t(a,b):=\int_0^t 1_{[a,b)}(Y(s))ds$ is continuous (for the proof one can see \cite{kesten} for example) and in particular, $R_t(B)=\int_a^bL_t(x)dx$.  \\
Notice that for any $a\in \Z$, $N_n(a)=n\int_0^11_{S_{\lfloor ns\rfloor}=a}ds$, thus 
\begin{align*}
R_t((n^{-1/2}S_{\lfloor nt\rfloor})_t)&=n^{-1}\left(\sum_{k\in [n^{1/2}a,n^{1/2}b]\cap \N}N_{\lfloor nt\rfloor}(k)+\xi(t)\right)\\
&=n^{-1/2}\left(\int_a^b N_{\lfloor nt\rfloor}(\lfloor n^{1/2}x\rfloor)dx\right)+n^{-1/2}g_n(a,b)+n^{-1}f(t),
\end{align*}
with $n^{-1}|f(t)|\leq n^{-1}$ and $|n^{-1/2}g_n(a,b)|\leq n^{-1}\left|N_{\lfloor nt\rfloor}(\lfloor n^{1/2}a\rfloor)+N_{\lfloor nt\rfloor}(\lfloor n^{1/2}b\rfloor)\right|$. Under assumption (RW1), these terms converge in probability to $0$, thus $R((n^{-1/2}S_{\lfloor nt\rfloor})_t)$ converges to $\int_a^bL_t(x)dx$. Since the the maps $R_{t_1},\dots,R_{t_r}$ are continuous by composition we obtain the convergence in law of 
\begin{align*}
&(\frac 1 {n^{1/2}}S_{\lfloor nt_1\rfloor},\dots,\frac 1 {n^{1/2}}S_{\lfloor nt_r\rfloor},\int_{n^{1/2}a}^{n^{1/2}b} \frac 1 {n^{1/2}}N_{\lfloor nt_1\rfloor}(x)dx,\dots,\int_{n^{1/2}a}^{n^{1/2}b} \frac 1 {n^{1/2}}N_{\lfloor nt_r\rfloor}(x)dx)=\\
&(\frac 1 {n^{1/2}}S_{\lfloor nt_1\rfloor},\dots,\frac 1 {n^{1/2}}S_{\lfloor nt_r\rfloor},R_{t_1}(\frac 1 {n^{1/2}}S_{\lfloor n.\rfloor}),\dots,R_{t_1}(\frac 1 {n^{1/2}}S_{\lfloor n.\rfloor}))
\end{align*} 
toward 
\begin{align*}
(B_{t_1},\dots,B_{t_r},R_{t_1}(B),\dots,R_{t_r}(B))=(B_{t_1},\dots,B_{t_r},\int_{a}^{b}L_{t_1}(x)dx,\dots,\int_{a}^{b}L_{t_r}(x)dx)
\end{align*}.
\end{proof}

We introduce $(S_n^t)_{t\in [0,S]}$ and $(N_n^t)_{t\in [0,S]}$ the continuous version from $S_n$ and $N_n$ defined respectively by
\begin{align*}
S_n^t:=(\lfloor nt \rfloor+1-nt)S_{\lfloor nt \rfloor}+(nt-\lfloor nt \rfloor)S_{\lfloor nt \rfloor+1}\\
N_n^t:=(\lfloor nt \rfloor+1-nt)N_{\lfloor nt \rfloor}+(nt-\lfloor nt \rfloor)N_{\lfloor nt \rfloor+1}.
\end{align*}

We deduce from hypothesis (RW3) that for any $n\in \N^*$ and any $t\in [0,S]$,
\begin{align}
|n^{-1/2}S_n^t-n^{-1/2}S_{\lfloor nt\rfloor}|&\leq n^{-1/2}D\nonumber\\
\|n^{-1/2}N_n^t-n^{-1/2}N_{\lfloor nt\rfloor}\|_p&\leq n^{-1/2}D.\label{eqcontloc}
\end{align}
From these inequalities and lemma \ref{lemconvweaktop} we deduce that\\  $
(\frac 1 {n^{1/2}}S_{n}^{t_1},\dots,\frac 1 {n^{1/2}}S_{n}^{t_r},\int_{n^{1/2}a}^{n^{1/2}b} \frac 1 {n^{1/2}}N_{n}^{t_1}(x)dx,\dots,\int_{n^{1/2}a}^{n^{1/2}b} \frac 1 {n^{1/2}}N_{n}^{t_r}(x)dx$
converges in law to \\
$(B_{t_1},\dots,B_{t_r},\int_{a}^{b}L_{t_1},\dots,\int_{a}^{b}L_{t_r})$ too and thus $(n^{-1/2}S_n^t,n^{-1/2}N_n^t)_{t\in [0,S]}$ converges for the weak $\Pi^*$-topology to $(B_t,L_t)_{t\geq 0}$.\\
Notice now that from hypothesis (RW1) and (RW3) $(n^{-1/2}S_n^t)_{t\in [0,S]}$ converges in the f.d.d sense to $(B_t)_{t\geq 0}$ and according to proposition 2.1 of \cite{plantard-dombry} $(n^{-1/2}N_n^t(.n^{1/2}))_{t\in [0,S]}$ converges in the f.d.d sense for the $(L^p,\|.\|_p)$ topology toward $(L_t)_{t\geq 0}$. Thus their respective f.d.d are tight i.e for any $t_1,\dots,t_r\in [0,S]$ and any $\epsilon >0$, there are compact sets $K_1\subset \mathbb{R}^r$ and $K_2\subset (L^p)^r$ such that for any $n \in \N$,
\begin{align*}
\begin{array}{l r c}
\mu\left((\frac 1 {n^{1/2}}S_{n}^{t_1},\dots,\frac 1 {n^{1/2}}S_{n}^{t_r})\in K_1\right)\geq 1-\epsilon\\
\mu\left((\frac 1 {n^{1/2}}N_{n}^{t_1}(n^{1/2}.),\dots,\frac 1 {n^{1/2}}N_{n}^{t_r}(n^{1/2}.))\in K_2\right)\geq 1-\epsilon
\end{array}
\end{align*}
Thus the coupling is also tight in the f.d.d sense. Thus to prove that $(n^{-1/2}S_n^t,n^{-1/2}N_n^t(n^{1/2}.))_{t\in [0,S]}$ converge in the f.d.d sense for the strong topology to $(B_t,L_t)_{t\geq 0}$ it is enough to prove the uniqueness of that limit whenever it exists. Suppose $(n^{-1/2}S_n^t,n^{-1/2}N_n^t(n^{1/2}.))_{t\in [0,S]}$ converges to some process $(X_t,L^X_t)_{t\in [0,S]}$ for the f.d.d in the strong topology i.e for any $(\alpha_i)_{i=1}^r, (\theta_i)_{i=1}^r \in \mathbb{R}^r$, the couple $(\sum_{i=1}^r \alpha_i n^{-1/2}S_n^{t_i}, \sum_{i=1}^r\theta_i n^{-1/2}N_n^{t_i}(n^{1/2}.))$ converge in law in \\
$(\mathbb{R}\times (L^p),|.|+\|.\|_{L^p}$ to 
$(\sum_{i=1}^r \alpha_iX_{t_i},\sum_{i=1}^r \theta_iL^X_{t_i})$. Since for any interval $I\subset \mathbb{R}$, $H_I : \mathbb{R}\times (L^p)\mapsto \mathbb{R}^2$ defined by $H_I(a,L):=(a,\int_IL(x)dx)$ is continuous then\\
$H_I(\sum_{i=1}^r \alpha_i n^{-1/2}S_n^{t_i}, \sum_{i=1}^r\theta_i n^{-1/2}N_n^{t_i}(n^{1/2}.))$ converges to $H_I(\sum_{i=1}^r \alpha_iX_{t_i},\sum_{i=1}^r \theta_iL^X_{t_i})$ for any interval $I$ thus the convergence of the f.d.d of $(n^{-1/2}S_n^t,n^{-1/2}N_n^t(n^{1/2}.))_{t\in [0,S]}$ to $(X_t,L^X_t)_{t\in [0,S]}$ in the strong topology implies the convergence of $(n^{-1/2}S_n^t,n^{-1/2}N_n^t(n^{1/2}.))_{t\in [0,S]}$ to $(X_t,L^X_t)_{t\in [0,S]}$ for the $\Pi^*$ topology. By the uniqueness of the limit in the latter topology and the tightness in the former topology, we deduce that $(n^{-1/2}S_n^t,n^{-1/2}N_n^t(n^{1/2}.))_{t\in [0,S]}$ converge in the f.d.d sense for the strong topology to $(B_t,L_t)_{t\geq 0}$.\\

To conclude the proof and prove the convergence in $C^0([0,T])\times C^0([0,T],L^p(\mathbb{R}))$\\
 of $(n^{-1/2}S_n^t,n^{-1/2}N_n^t(n^{1/2}.))_{t\in [0,S]}$ to $(B_t,L_t)_{t\in [0,S]}$ we now need to prove the tightness of  $\left(n^{-1/2}N_n^t(n^{1/2}.)\right)_{t\in [0,S]}$ in $C^0([0,S],L^p)$ using the following Dini's Theorem :

\begin{lem}\label{lemdini}
Let $(E,d)$ be a metric space, $S>0$ and $(f_n)_{n\in \N}$ a sequence with $f_n \in C^0([0,S],E)$ converging point-wise to a function $f\in C^0([0,S])$ such that $(f_n)_{n\in \N}$ and $f$ satisfy some non decreasing property : for any $r\leq s\leq t$,
\begin{align}\label{eqcroiss}
\sup\left( d(f_n(r),f_n(s)),d(f_n(s),f_n(t))\right)\leq d(f_n(r),f_n(t)). 
\end{align}
Then $(f_n)_{n\in \N}$ is uniformly converging to $f$ with the uniform norm given by \\
\begin{align*}
d(f,g):=\sup_{t\in [0,T]}(d(f(t),g(t))). 
\end{align*}
\end{lem}
\begin{proof}[proof of lemma \ref{lemdini}]
Let $\epsilon>0$ and $0=t_1< \dots < t_r=1$ an ordered family such that for any $i\in \{1,\dots,r\}$, $d(f(t_i),f(t_{i+1}))\leq \epsilon$. Then by the point-wise convergence, there is $N\in \N$ such that for any $n\geq N$ and any $i\in \{1,\dots,r\}$, $d(f_n(t_i),f(t_i))\leq \epsilon$. Thus for any $t_i\leq s\leq t\leq t_{i+1}$,
\begin{align*}
d(f_n(t),f_n(s))&\leq d(f_n(t_i),f_n(t_{i+1}))\\
&\leq d(f_n(t_i),f(t_i))+d(f_n(t_{i+1}),f(t_{i+1})) +d(f(t_i),f(t_{i+1}))\\
&\leq 3\epsilon.
\end{align*}
Thus for any $s,t \in [0,S]$ such that $|t-s|\leq \inf\{t_{i+1}-t_i, i\leq r-1\}$ and any $n\in \N$ large enough $d(f_n(t),f_n(s))\leq 6\epsilon$. Thus $(f_n)_{n\in \N}$ is an equi-continuous family in $C^0([0,S],E)$ and applying Ascoli theorem, $(f_n)_{n\in \N}$ converges uniformly to $f$.
\end{proof}
Notice that any observable from the sequence $(n^{-1/2}S_n^t,n^{-1/2}N_n^t(n^{1/2}.))_{t\in [0,S]}$ satisfies condition \eqref{eqcroiss} in other words the couple takes its values in the subset $S \subset C^0([0,T])\times C^0([0,T],L^p(\mathbb{R}))$ of non-decreasing functions. Thus the family is tight in $C^0([0,T])\times C^0([0,T],L^p(\mathbb{R}))$ for the uniform topology. Indeed since
$(n^{-1/2}S_n^t,n^{-1/2}N_n^t(n^{1/2}.))_{t\in [0,S]}$ converges in law for the topology of the f.d.d, for any $\epsilon>0$, there is some compact $K_\epsilon$ for this topology such that
\begin{align*}
\mu((&n^{-1/2}S_n^t,n^{-1/2}N_n^t(n^{1/2}.))_{t\in [0,S]}\in K_\epsilon)\\
&=\mu((n^{-1/2}S_n^t,n^{-1/2}N_n^t(n^{1/2}.))_{t\in [0,S]}\in K_\epsilon\cap S)\geq 1-\epsilon.
\end{align*}
From lemma \ref{lemdini}, the set $K_\epsilon\cap S$ is relatively compact in $C^0([0,T])\times C^0([0,T],L^p(\mathbb{R}))$ for the uniform norm given by $\|(f,g)\|=\sup_{s\in [0,S]}(|f|+\|g\|_p)$. From inequality \eqref{eqcontloc} we deduce that the convergence also holds for $(n^{-1/2}S_{\lfloor nt\rfloor},\frac{1}{n^{1/2}}N_{\lfloor nt \rfloor}(n^{1/2}.))_{t\in [0,S]}$ in the space $D([0,T])\times D([0,T],L^p(\mathbb{R}))$.
\end{proof}

\end{appendix}

\subsection*{Aknowledgement} This article has been written as part of my PhD thesis under the supervision of Françoise Pène who initiated this work and provided precious advices and editorial support all along.

\bibliographystyle{plain}
\bibliography{biblio2}

\end{document}